\newtheorem{theorem}{Theorem}[section]
\newtheorem{lemma}[theorem]{Lemma}
\newtheorem{proposition}[theorem]{Proposition}
\newtheorem{corollary}[theorem]{Corollary}
\theoremstyle{definition}
\newtheorem*{Index Convention}{Index Convention}
\def\keywords#1{\par\medskip
\noindent\textbf{Keywords.} #1}
\def\subjclass#1{{\renewcommand{\thefootnote}{}
\footnote{\emph{Mathematics Subject Classification (2010):} #1}}}
\begin{document}
\let\le=\leqslant
\let\ge=\geqslant
\let\leq=\leqslant
\let\geq=\geqslant
\newcommand{\e}{\varepsilon }
\newcommand{\f}{\varphi }
\newcommand{ \g}{\gamma}
\newcommand{\F}{{\Bbb F}}
\newcommand{\N}{{\Bbb N}}
\newcommand{\Z}{{\Bbb Z}}
\newcommand{\Q}{{\Bbb Q}}
\newcommand{\C}{{\Bbb C}}
\newcommand{\R}{\Rightarrow }
\newcommand{\W}{\Omega }
\newcommand{\w}{\omega }
\newcommand{\s}{\sigma }
\newcommand{\hs}{\hskip0.2ex }
\newcommand{\ep}{\makebox[1em]{}\nobreak\hfill $\square$\vskip2ex }
\newcommand{\Lr}{\Leftrightarrow }
\sloppy


\title{Lie type algebras with an automorphism of finite order}

\markright{}

\author{{\sc N.\,Yu.~Makarenko \\ \small Sobolev Institute of Mathematics, Novosibirsk,
630\,090,
Russia, \\[-1ex] \small natalia\_makarenko@yahoo.fr
}}

\author{
{N.\,Yu.~Makarenko\footnote{The work is supported by  Russian
Science Foundation, project 14-21-00065.}}\\
\small  Sobolev Institute of Mathematics, Novosibirsk, 630\,090,
Russia
\\[-1ex] \small  natalia\_makarenko@yahoo.fr}

\date{}
\maketitle

\subjclass{Primary 17A36, 17B70; Secondary 17B75; 17B40; 17B30;
17A32}


\begin{abstract} An algebra $L$ over a field $\Bbb
F$, in which product is denoted by $[\,,\,]$, is said to be
\textit{ Lie type algebra}  if for all elements $a,b,c\in L$ there
exist $\alpha, \beta\in \Bbb F$ such that $\alpha\neq 0$ and
$[[a,b],c]=\alpha [a,[b,c]]+\beta[[a,c],b]$.  Examples of Lie type
algebras are associative algebras, Lie algebras, Leibniz algebras,
etc.   It is proved that if a Lie type algebra $L$ admits an
automorphism of finite order $n$ with finite-dimensional
fixed-point subalgebra of dimension $m$, then $L$ has a soluble
ideal  of finite codimension bounded in terms of $n$ and $m$ and
of derived length bounded in terms of $n$.
\end{abstract}

\keywords{non-associative algebra, Lie type algebra, almost
regular automorphism, finite grading, graded algebra, almost
soluble, Leibniz algebra, Lie superalgebra, color Lie
superalgebra}

\section{Introduction}
 By Kreknin's theorem~\cite{kr} a Lie algebra over a field admitting a
fixed-point-free automorphism of finite order $n$ is soluble of
derived length at most~$\leq 2^n-2$. In~\cite{khu-ma,ma-khu} it
was proved that a Lie algebra with an ``almost regular''
automorphism of finite order is almost soluble: if a Lie algebra
$L$ over a field admits an automorphism $\varphi$ of finite order
$n$ such that the fixed-point subalgebra $C_L(\varphi )$ has
finite dimension $m$, then $L$ has a soluble ideal of finite
codimension bounded in terms of $n$ and $m$ and of derived length
bounded in terms of $n$.

The proofs of the above results are purely combinatorial  and do
not use the structure theory. This fact makes it possible to
extend them to a broader class of algebras including associative
algebras, Lie algebras,  Leibniz algebras and others. Throughout
the present paper, a \textit{Lie type algebra} means an algebra
$L$ over a field $\Bbb F$ with product $[\,,\,]$ satisfying the
following property: for all elements $a, b, c \in L$ there exist
$\alpha, \beta \in \Bbb F$ such that $\alpha\ne 0$ and
$$
[[a,b],c]=\alpha [a,[b,c]]+\beta[[a,c],b].$$ Note that in general
$\alpha, \beta$ depend on elements $a,b,c\in L$;  they can be
viewed as functions $\alpha, \beta: L\times L \times L \rightarrow
\Bbb F$.

 The main result of the paper is the following

 \begin{theorem}\label{th2}
 Suppose that a Lie type  algebra $L$ \/$($of  possibly infinite dimension\/$)$ over an arbitrary field admits an
automorphism of finite order $n$ with finite-dimensional
fixed-point subalgebra of dimension $m$, then $L$ has a soluble
ideal  of finite codimension bounded in terms of $n$ and $m$ and
of derived length bounded in terms of $n$.
\end{theorem}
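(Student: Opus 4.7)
The plan is to adapt the purely combinatorial arguments of Khukhro and Makarenko~\cite{khu-ma,ma-khu} to the Lie type setting, since the only structural fact those proofs use is the ability to rearrange nested brackets, which the Lie type identity still permits with nonzero scalar coefficients. First I would extend scalars to a field $\F'\supseteq\F$ containing a primitive $n$-th root of unity $\w$; the extended algebra $L\otimes_\F\F'$ is again of Lie type, admits the extended $\f$ with fixed-point subspace of dimension $m$, and a soluble ideal of given codimension and derived length over $\F'$ descends to one over $\F$ with the same invariants. Over $\F'$ the automorphism $\f$ is diagonalisable and produces a $\Z/n\Z$-grading $L=L_0\oplus L_1\oplus\cdots\oplus L_{n-1}$, with $L_i=\{x:\f(x)=\w^i x\}$, $[L_i,L_j]\subseteq L_{i+j}$ (indices mod $n$), and $\dim L_0=m$.

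The central step is a graded Kreknin theorem for Lie type algebras: if $L$ is $\Z/n\Z$-graded as above and $L_0=0$, then $L$ is soluble of derived length bounded by a function of $n$. Kreknin's proof works with long left-normed brackets of homogeneous elements and uses the Jacobi identity to reorganise them until a subproduct must lie in $L_0$ and therefore vanish. The Lie type identity $[[a,b],c]=\alpha[a,[b,c]]+\beta[[a,c],b]$ with $\alpha\ne 0$ performs the same rearrangement up to division by the nonzero scalar $\alpha$ and addition of a right-normed correction term. Because the argument tracks only membership of brackets in graded components rather than specific values, the same induction on bracket length should yield the same qualitative conclusion.

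To pass from the graded case $L_0=0$ to the almost-regular case $\dim L_0=m$, I would use generalised centralisers in the sense of~\cite{khu-ma,ma-khu}. For each $i\ne 0$, starting from $L_i^{(0)}=L_i$, iteratively pass to subspaces $L_i^{(s+1)}\subseteq L_i^{(s)}$ consisting of elements whose left and right brackets with $L_0$ stay inside $L_i^{(s)}$; since $\dim L_0=m$, each step drops codimension by at most a factor depending only on $m$. After a number of iterations bounded in $n$, one obtains $T_i\subseteq L_i$ of codimension bounded in $n$ and $m$ whose interaction with $L_0$ is so constrained that the graded Kreknin argument applies to the ideal $I$ generated by $\bigoplus_{i\ne 0}T_i$. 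Then $I$ is soluble of derived length bounded in $n$, and $\dim L/I$ is bounded in $n$ and $m$, as required.

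The main obstacle is to verify that the combinatorial manipulations survive the passage from Jacobi to the Lie type identity. The coefficients $\alpha(a,b,c),\beta(a,b,c)$ depend on the specific triple of arguments, so terms arising from different applications cannot be collected, and each branch of the expansion must be followed separately with membership in graded components verified at every step. A second technical point is the absence of antisymmetry, which forces generalised centralisers to be defined two-sidedly and requires codimension estimates for intersections of such centralisers. Once these bookkeeping issues are resolved, the Khukhro--Makarenko scheme transfers and yields the stated bounds.
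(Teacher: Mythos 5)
Your reduction to the graded case assumes that after extending scalars $\varphi$ becomes diagonalisable, but this fails precisely when the characteristic $p$ of the ground field divides $n$: no extension of the field contains a primitive $n$-th root of unity, the $p$-part of $\varphi$ is unipotent rather than semisimple, and the eigenspace decomposition does not produce a $(\Z/n\Z)$-grading with $\dim L_0=m$. The paper disposes of this case first: write $\langle\varphi\rangle=\langle\psi\rangle\times\langle\chi\rangle$ with $\psi$ the $p$-part, apply to the $\chi$-fixed subalgebra $A=C_L(\chi)$ the linear-algebra fact (Lemma~\ref{p-p}) that a transformation of order $p^k$ in characteristic $p$ with $m$-dimensional fixed space lives on a space of dimension at most $mp^k$, conclude that $\dim C_L(\chi)$ is $(m,n)$-bounded, and then replace $\varphi$ by the semisimple $\chi$. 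Without this step your extension-of-scalars argument does not get started in the modular case; with it, the rest of your first paragraph (extend by $\omega$, grade by eigenspaces, intersect the resulting homogeneous ideal with $L$) is exactly the paper's deduction of Theorem~\ref{th2} from Theorem~\ref{th1}.

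The deeper problem is with your account of the graded theorem itself, which is where all the work lies. The generalized centralizers are not the subspaces you describe (elements whose left and right brackets with $L_0$ stay inside the previous subspace); they are intersections of kernels of the maps $\vartheta_{t,\vec z}\colon L_j\to L_0$, $y\mapsto[z_{i_1},\ldots,y,\ldots,z_{i_k}]$, taken over boundedly many tuples of \emph{representatives} that must be fixed level by level simultaneously with the construction, and it is this interplay that gives the centralizer property~(\ref{basic-property}). More importantly, the solubility of the ideal $Z$ generated by the top-level centralizers is not obtained by ``applying the graded Kreknin argument'': $Z$ has a nontrivial zero component, and the vanishing one has is only relative to the fixed representatives. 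The paper instead needs the solubility criterion (Proposition~\ref{p1}), the descending chain of subalgebras $Z\langle i\rangle$, the $zc$-elements with their type combinatorics, and the induction on $\overline{k}=(k,n)$ in Proposition~\ref{p2}, with all the auxiliary lemmas re-proved in two-sided form to compensate for the missing anticommutativity. Your closing claim that ``once the bookkeeping is resolved the Khukhro--Makarenko scheme transfers'' names the programme of \cite{khu-ma,ma-khu} but does not supply the argument, and the specific mechanism you propose (subspaces $T_i$ to which Kreknin's induction applies directly) would not work as stated.
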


Theorem \ref{th2} is also non-trivial for finite-dimensional Lie
type algebras because of the bound for the codimension. Note that
no results of this kind is possible for an automorphism of {\it
infinite\/} order: a free Lie algebra on the free generators
$f_i$, $i\in \Bbb Z $, admits the regular automorphism given by
the mapping $f_i\rightarrow f_{i+1}$. The proof reduces to
considering a $({\Bbb Z} /n{\Bbb Z} )$-graded algebras with
finite-dimensional zero component (Theorem~\ref{th1}). Recall that
an algebra $L$ over a field $\Bbb F$ with product $[\,,\,]$ is
\textit{ $({\Bbb Z}/n{\Bbb Z})$-graded} if
$$L=\bigoplus_{i=0}^{n-1}L_i\qquad \text{ and }\qquad [L_i,L_j]\subseteq L_{i+j\,({\rm mod}\,n)},$$
where  $L_i$ are subspaces  of~$L$. Elements of $L_i$ are referred
to as homogeneous and the subspaces $L_i$  are called homogeneous
components or  grading components. In particular, $L_0$ is called
the zero component or the identity component.

Finite cyclic gradings naturally arise in
 the study of algebras admitting  an automorphism of finite order.
This is due to the  fact that, after the ground field is  extended
by a primitive $n$th root of unity~$\omega$, the eigenspaces
$L_j=\{ a \mid \varphi (a)=\omega ^ja\}$ behave like the
components of a $({\Bbb Z} /n{\Bbb Z})$-grading:
$[L_s,L_t]\subseteq L_{s+t},$ where $s+t$ is calculated modulo
$n$.  For example,  Kreknin's theorem~\cite{kr} can be
reformulated in terms of graded Lie algebras  as follows: a
$({\Bbb Z}/n{\Bbb Z})$-graded Lie algebra
$L=\bigoplus_{i=0}^{n-1}L_i$ over an arbitrary field with trivial
zero component $L_0=0$ is soluble of derived length at most~$\leq
2^n-2$. The proof of the result on ``almost regular''
automorphisms in~\cite{ma-khu}  also reduces to considering a
$({\Bbb Z}/n{\Bbb Z})$-graded Lie algebra
$L=\bigoplus_{i=0}^{n-1}L_i$, but in this case the zero component
$L_0$ has finite dimension~$m$.

Before stating Theorem \ref{th1}, we introduce a notion of a
\textit{$({\Bbb Z} /n{\Bbb Z} )$-graded Lie type algebra} as a
$({\Bbb Z} /n{\Bbb Z} )$-graded algebra $L=
\bigoplus_{i=0}^{n-1}L_i$ over a field $\Bbb F$  with product
$[\,,\,]$ satisfying the following property: for all
\textit{homogeneous} elements $a, b, c \in L$ there exist $\alpha,
\beta \in \Bbb F$ such that $\alpha\ne 0$ and
\begin{equation}\label{tozh}
[[a,b],c]=\alpha [a,[b,c]]+\beta[[a,c],b]. \end{equation} The only
difference with the definition of a Lie type algebra is that
property (\ref{tozh}) is defined only for homogeneous elements of
$L$. It is clear that if $n=1$, then a $({\Bbb Z} /n{\Bbb Z}
)$-graded Lie type algebra is a Lie type algebra. An important
example of a $({\Bbb Z} /n{\Bbb Z} )$-graded Lie type algebra
(that is  not a Lie type algebra) is a color Lie superalgebra.

\vskip1ex \noindent {\bf Remark.}  Our class of $({\Bbb Z} /n{\Bbb
Z} )$-graded Lie type algebras includes  algebras of Lie type in
the sense of Bakhturin-Zaicev  introduced in \cite{ba-za}.

\vskip1ex

 In \cite{brgr} Bergen and Grzeszczuk  extended
Kreknin's theorem~\cite{kr} to $({\Bbb Z} /n{\Bbb Z} )$-graded Lie
type algebras. They established (even in a more general setting of
so called $(\alpha, \beta, \gamma)$-algebra) the solubility of  a
$({\Bbb Z} /n{\Bbb Z} )$-graded Lie type algebra $L=
\bigoplus_{i=0}^{n-1}L_i$  with trivial zero component $L_0=0$.

The following theorem  deals with  case of $\mathrm{dim}\, L_0=m$
and extends the above mentioned result of \cite{ma-khu}  to
$({\Bbb Z} /n{\Bbb Z} )$-graded Lie type algebras.

 \begin{theorem}\label{th1} Let $n$ be a positive integer and  $L= \bigoplus_{i=0}^{n-1}L_i$  a $({\Bbb Z} /n{\Bbb Z} )$-graded
\/$($possibly infinite-dimensional\/$)$  Lie type algebra  over an
arbitrary field. If the zero component $L_0$ has finite dimension
$m$, then $L$ has a homogeneous soluble ideal of finite
codimension bounded in terms of $n$ and $m$ and of derived length
bounded in terms of $n$. In the particular case of $m=0$, the
algebra $L$ is soluble of derived length bounded in terms of $n$.
\end{theorem}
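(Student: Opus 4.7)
\textbf{Proof proposal for Theorem \ref{th1}.}

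The plan is to reduce the case $m > 0$ to the already-known case $m = 0$ (which is the Bergen--Grzeszczuk theorem). Specifically, I aim to construct a homogeneous ideal $I = \bigoplus_{i=0}^{n-1} I_i$ of $L$ whose codimension in $L$ is bounded by a function $f(n,m)$ and such that $I \cap L_0 = 0$. Once such an ideal is produced, $I$ is itself a $(\mathbb{Z}/n\mathbb{Z})$-graded Lie type algebra with trivial zero component, and Bergen--Grzeszczuk gives that $I$ is soluble of derived length bounded in terms of $n$ alone, finishing the proof.

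The construction of $I$ is by the method of \emph{generalized centralizers} adapted from the Lie-algebra setting of \cite{ma-khu}. For each nonzero $i \in \mathbb{Z}/n\mathbb{Z}$ and each homogeneous element $x \in L_i$, only a finite number of iterated products of $x$ with other homogeneous elements can land in the finite-dimensional zero component $L_0$ before they are forced by dimension to be linearly dependent. I would formalize this by defining, inductively, chains of subspaces $L_i = U_i^{(0)} \supseteq U_i^{(1)} \supseteq \cdots$ where each $U_i^{(k+1)}$ is cut out by finitely many linear conditions ensuring that a fixed type of product with fixed elements from earlier steps has zero $L_0$-component. The finiteness of $\dim L_0 = m$ guarantees that this process terminates after a number of steps bounded by a function of $n$ and $m$, yielding subspaces $U_i \subseteq L_i$ of codimension bounded in terms of $n$ and $m$ with the property that sufficiently long homogeneous products beginning inside $\bigoplus_{i\ne 0} U_i$ avoid $L_0$ entirely. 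Setting $I$ to be the homogeneous ideal generated by these $U_i$ (with $I_0 = 0$) should give the required ideal.

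The two main obstacles I expect are the following. First, because the identity \eqref{tozh} only asserts the \emph{existence} of scalars $\alpha, \beta$ depending on the triple $(a,b,c)$, with $\alpha \ne 0$, classical commutator manipulations (linearity of the Jacobi identity, antisymmetry, Hall-type collections) are unavailable, and every rearrangement of an iterated product must be performed element by element and tracked for the hypothesis $\alpha \ne 0$ to be exploited uniformly. The standard trick is to rewrite \eqref{tozh} as
\[
[a,[b,c]] = \alpha^{-1}[[a,b],c] - \alpha^{-1}\beta\,[[a,c],b],
\]
so that any right-normed bracket can be expressed as a linear combination of left-normed brackets in the same three arguments, but the scalar coefficients depend on the triple, which prevents naive multilinear expansion. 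This needs to be handled by showing that the \emph{linear span} of a family of iterated products is preserved under such rewriting, which is usually enough for dimension-counting arguments. Second, verifying that the subspace generated by the $U_i$ is actually an ideal (not merely a subalgebra) and that $I \cap L_0 = 0$ requires that the centralizer construction in Step 2 be compatible with multiplication by arbitrary (not necessarily centralizing) homogeneous elements; this typically forces one to centralize not only products of elements in $\bigoplus U_i$ but also their images under multiplication by $L$, at the cost of additional linear conditions — still finitely many, by finiteness of $\dim L_0$.

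If both obstacles are overcome, the bound on the derived length comes exclusively from the Bergen--Grzeszczuk step applied to $I$, and therefore depends only on $n$; the bound on $\mathrm{codim}\, I$ depends on both $n$ and $m$ through the length of the centralizer chain. The case $m = 0$ is handled directly: here $I = L$ and one simply quotes Bergen--Grzeszczuk.
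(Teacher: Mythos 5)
Your plan hinges on producing a homogeneous ideal $I$ of finite $(n,m)$-bounded codimension with $I\cap L_0=0$, and then quoting the Bergen--Grzeszczuk theorem for graded Lie type algebras with trivial zero component. Such an ideal need not exist, so the reduction fails at its first step. A counterexample already occurs among Lie algebras: fix $n\geq 2$ and let $L$ be the infinite-dimensional Heisenberg algebra with basis $z,\,a_1,a_2,\ldots,\,b_1,b_2,\ldots$, products $[a_i,b_j]=-[b_j,a_i]=\delta_{ij}z$, $z$ central and all other products zero, graded by $L_0=\langle z\rangle$, $L_1=\langle a_i\rangle$, $L_{n-1}=\langle b_i\rangle$ (these two components coincide when $n=2$), the remaining components zero. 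Here $\dim L_0=1$. If $I$ is any homogeneous ideal of finite codimension, then $I\cap L_1\neq 0$; taking $0\neq v=\sum_i c_ia_i\in I\cap L_1$ and $j$ with $c_j\neq 0$ gives $[v,b_j]=c_jz\in I$, hence $L_0\subseteq I$. So no homogeneous ideal of finite codimension avoids $L_0$, and the object your argument needs cannot be constructed, no matter how the generalized-centralizer machinery is set up. (The theorem is of course true for this $L$, which is nilpotent, but not by your route.)

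This is exactly why the paper does not argue by reduction to the $L_0=0$ case. It does build generalized centralizers $L_i(N)\subseteq L_i$, $i\neq 0$, of $(m,n)$-bounded codimension and takes $Z={}_{\rm id}\langle L_1(N),\ldots,L_{n-1}(N)\rangle$, but it makes no claim that $Z\cap L_0=0$; on the contrary, controlling $Z_0=Z\cap L_0$ is the heart of the proof. Solubility of $Z$ with $n$-bounded derived length is proved directly: the solubility criterion (Proposition~\ref{p1}) is applied repeatedly to the chain of subalgebras $Z\langle i\rangle$, and termination after $n$-boundedly many steps is established through the combinatorics of $zc$-elements (Lemmas~\ref{l-9}--\ref{l-14} and Proposition~\ref{p2}), with the derived length bound accumulating from these $n$-bounded iterations rather than coming from a single Kreknin/Bergen--Grzeszczuk application. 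Your second concern (that identity~(\ref{tozh}) is not multilinear because $\alpha,\beta$ depend on the triple) is a genuine technical point, and the paper deals with it by always rewriting products as linear combinations of simple products and arguing with spans; but that issue is secondary to the main gap above, which is structural rather than technical.
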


Theorem \ref{th1} implies Theorem \ref{th2}, but it also has an
independent interest of its own; in particular, Theorems \ref{th5}
and \ref{th6} on color Lie superalgebras follow from it
(see~\S\,2).

\vskip1ex

 A \textit{\/$($right\/$)$ Leibniz
algebra} or  \textit{Loday algebra} is an algebra $L$ over a field
 with  bilinear product $[\, ,\, ]$ satisfying the Leibniz
identity $$[[a,b],c] = [a,[b,c]]+ [[a,c],b]$$ for all $a,b,c\in
L$.

 If under the hypothesis of Theorems \ref{th2}  we
set $\alpha=1$, $\beta=1$ for all $a, b, c \in L$, then the
algebra $L$ becomes a (right) Leibniz algebra and we immediately
get the following corollaries.

\vskip1ex

In what follows, we use abbreviation, say, ``$(m,n,\dots
)$-bounded'' for ``bounded above in terms of  $m, n,\dots$.

\begin{corollary}\label{th4} If a Leibniz algebra $M$ admits an
automorphism $\varphi$ of finite order~$n$ with finite-dimensional
fixed-point subalgebra of dimension $m$, then $M$ has a soluble
ideal of $n$-bounded derived length and of finite $(n,m)$-bounded
codimension. If $m=0$ then $M$ is soluble  of $n$-bounded derived
length.
\end{corollary}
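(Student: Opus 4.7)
The plan is essentially to observe that the Leibniz identity
$$[[a,b],c] = [a,[b,c]] + [[a,c],b]$$
is precisely the defining identity of a Lie type algebra specialized to the constant choice $\alpha \equiv 1$, $\beta \equiv 1$. Since the required condition $\alpha \neq 0$ is satisfied (indeed $1 \neq 0$ in any field), every (right) Leibniz algebra is automatically a Lie type algebra in the sense introduced in the paper. Note that Theorem~\ref{th2} allows $\alpha, \beta$ to depend on the triple $(a,b,c)$, so a constant choice is a legitimate special case and there is nothing to verify beyond exhibiting those constants.

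Once this identification is made, I would simply apply Theorem~\ref{th2} to $M$ with its automorphism $\varphi$ of finite order $n$ and $m$-dimensional fixed-point subalgebra. The theorem immediately yields a soluble ideal of $M$ whose codimension is bounded in terms of $n$ and $m$ and whose derived length is bounded in terms of $n$, which is exactly the first assertion of the corollary. The second assertion, for $m=0$, is the special case in which the fixed-point subalgebra is trivial: applying the last sentence of the statement of Theorem~\ref{th1} (which the author cites as the $m=0$ case and which Theorem~\ref{th2} inherits via its reduction to Theorem~\ref{th1}) gives that $M$ itself is soluble of $n$-bounded derived length.

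There is no substantive obstacle in this proof; the corollary is a straightforward specialization, which is why the author writes that it follows immediately from Theorem~\ref{th2}. The only conceptual point worth stressing in the write-up is that the framework of Lie type algebras was designed precisely so that constant coefficient identities such as Leibniz's fall under it, and thus no separate combinatorial argument for Leibniz algebras is required.
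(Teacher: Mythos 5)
Your proposal is correct and is exactly the paper's own argument: the Leibniz identity exhibits $M$ as a Lie type algebra with the constant choice $\alpha=\beta=1$, and Theorem~\ref{th2} (with the $m=0$ case traced back through its reduction to Theorem~\ref{th1}) gives the corollary immediately. Nothing further is needed.
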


\begin{corollary}\label{th3} Let $n$ be a positive integer and  $L= \bigoplus_{i=0}^{n-1}L_i$  a $({\Bbb Z} /n{\Bbb Z} )$-graded
 Leibniz algebra over an arbitrary field. If the zero component  $L_0$ has
finite dimension $m$, then $L$ has a homogeneous soluble ideal of
$n$-bounded derived length and of finite $(n,m)$-bounded
codimension. If $m=0$, then $M$ is soluble  of $n$-bounded derived
length.
\end{corollary}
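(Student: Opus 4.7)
The plan is to deduce Corollary \ref{th3} directly from Theorem \ref{th1} by observing that every Leibniz algebra is, tautologically, a Lie type algebra in which the structure constants $\alpha,\beta$ in the defining identity can be chosen uniformly. Concretely, the Leibniz identity
\[
[[a,b],c] = [a,[b,c]] + [[a,c],b]
\]
is exactly the special case of the Lie type identity with $\alpha(a,b,c)\equiv 1$ and $\beta(a,b,c)\equiv 1$, and since $1\ne 0$ in any field, the nonvanishing requirement on $\alpha$ is automatic.

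First I would verify that if $L=\bigoplus_{i=0}^{n-1}L_i$ is a $({\Bbb Z}/n{\Bbb Z})$-graded Leibniz algebra, then it is also a $({\Bbb Z}/n{\Bbb Z})$-graded Lie type algebra in the sense of the paper. This is immediate because the Leibniz identity holds for \emph{all} elements $a,b,c\in L$, hence in particular for all homogeneous elements, which is all that the graded Lie type definition requires. The grading itself (the decomposition and the inclusion $[L_i,L_j]\subseteq L_{i+j\pmod n}$) is part of the hypothesis and transfers unchanged.

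Next I would apply Theorem \ref{th1} to $L$. The hypothesis $\dim L_0=m$ is common to both statements, so Theorem \ref{th1} yields a homogeneous soluble ideal $I\trianglelefteq L$ whose codimension is $(n,m)$-bounded and whose derived length is $n$-bounded. This is exactly the conclusion required in the general case. For the particular case $m=0$, Theorem \ref{th1} directly gives that $L$ itself is soluble of $n$-bounded derived length, which yields the final sentence of the corollary.

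Since the deduction is a one-line specialization of the main graded theorem, there is essentially no obstacle to surmount: the only thing to check is the trivial observation that the constant functions $\alpha\equiv\beta\equiv 1$ satisfy the Lie type hypothesis. The substance of the work is entirely absorbed into Theorem \ref{th1}; the corollary is recorded separately because Leibniz algebras are the most natural and widely studied instance of Lie type algebras with constant structure functions.
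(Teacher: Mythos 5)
Your proposal is correct and matches the paper's own deduction: the paper obtains Corollary \ref{th3} by exactly this observation that the Leibniz identity is the Lie type identity with $\alpha=\beta=1$, so a graded Leibniz algebra is a graded Lie type algebra and Theorem \ref{th1} applies verbatim. Nothing further is needed.
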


The proof of Theorem~\ref{th1} follows the same scheme as that
of~\cite{ma-khu}. Combinatorial arguments in~\cite{ma-khu} are
based only on the Jacoby identity and the  anticommutativity
identity in Lie algebras. In our case the Jacoby identity can be
successfully replaced by  property~(\ref{tozh}). The main
difficulty facing us is  the lack of the anticommutativity. In
order to manage this complication we had to somewhat change the
principal construction and to re-prove all the lemmas.   For the
reader's convenience we give detailed proofs of all lemmas even
though some of them overlap significantly with the proofs of
analogous lemmas in \cite{ma-khu}.

The core of the proof is the method of generalized centralizers
created by Khukhro for Lie rings and nilpotent groups with almost
regular automorphisms of prime order \cite{kh1} and developed
further in \cite{ma-khu96, ma-khu98, khu-ma, ma-khu}. The
sought-for ideal $Z$ is generated by so called generalized
centralizers $L_i(N)$, certain subspaces of the homogeneous
components $L_i$, $i\ne 0$, of finite $(n,m)$-bounded
codimensions. Construction of generalized centralizers $L_i(k)$ of
increasing levels $k=1,\ldots, N$ is realized by induction  up to
some $n$-bounded value~$N$. Simultaneously, certain elements
$z_i(k)$, called representatives of level $k$, are fixed. Elements
of the $L_j(k)$ have a centralizer property with respect to the
representatives of lower levels: if a product of bounded length
involves exactly one element $y_j\in L_j(k)$ of level $k$ and some
representatives $z_i(s)\in L_i(s)$ of lower levels $s<k$ and
belongs to $L_0$, then this product is equal to~$0$. The proof of
the fact that $Z$ is soluble of bounded derived length is based on
Proposition \ref{p1} which is an analogue of solubility criterion
in~\cite{kh3}  for Lie rings. Proposition \ref{p1} reduces the
solubility of $Z$ to the solubility of the subalgebra generated by
the subspace
$$S(Z)=\sum_{t=0}^{T}[\underbrace{Z_0,\ldots, Z_0}_t, Z , \underbrace{Z_0,\ldots, Z_0}_{T-t}],$$ where $Z_0=Z\cap L_0$ and $T$
is a certain $n$-bounded number.
 It is applied repeatedly to the series of embedded subalgebras $Z\langle i\rangle$ of
 $Z$ constructed inductively as follows: $Z\langle 1\rangle =Z$; $Z\langle i+1\rangle $ is the subalgebra generated by
$S (Z\langle i \rangle)$.  Thus the proof boils down to the fact
that $Z_0 \langle Q \rangle$ is trivial for  some  $n$-bounded
number $Q$. This is
 accomplished by intricate and subtle calculations by means of  $zc$-elements,
 some special elements of $L_0$ of increasing complexity
which, in particular, generate $Z_0\langle
 i\rangle$.

The paper is organized as follows. Corollaries for Lie
superalgebras and color Lie superalgebras are presented in~\S\,2.
We introduce some definitions and notations in~\S\,3. Then we
prove in \S\,4 the solubility criterion (Proposition~\ref{p1}).
Generalized centralizers and fixed representatives are constructed
and their basic properties are listed in~\S\,5.
 In \S\,6 we construct the  required soluble ideal $Z$  and define $zc$-elements. In
 \S\,7
we establish the basic properties of $zc$-elements. In \S\,8
Theorem~\ref{th1} is proved. In \S\,9 we determine the scheme of
choice of the parameters. In~\S\,10 we prove Theorem~\ref{th2} for
``almost regular'' automorphisms and derive results for color Lie
superalgebras.

\section{Corollaries for color Lie superalgebras}

Before stating corollaries  of Theorem~\ref{th1} for Lie
superalgebras and  color Lie superalgebras we recall some
definitions.

 \vskip1ex

A $(\Bbb Z /2\Bbb Z )$-graded algebra $L=L_0+L_1$  with
multiplication  $[\,,\,]$ is called  {\it Lie superalgebra} if
$$[a,b]=-(-1)^{\alpha\beta}[b,a]$$ and
$$[a,[b,c]]=[[a,b],c]+(-1)^{\alpha\beta}[b,[a,c]]$$
 for   $a\in L_{\alpha},\,\, b\in L_{\beta}$.

\vskip1ex

Let  $Q$ be an abelian group. A $Q$-graded algebra $L=\oplus_{q\in
Q} L_q$ is called {\it color Lie superalgebra} if for all
homogeneous elements $x\in L_p$, $y\in L_q$, $z\in L_t$ the
following equations hold:
$$xy=-\epsilon(p,q)yx,$$
$$x(yz)=(xy)z+\epsilon(p,q)y(xz),$$
where  $\epsilon(p,q)$ is a skew-symmetric bilinear form, that is
$\epsilon: Q\times Q\rightarrow F^*,$
$$\epsilon(p,q)\epsilon(q,p)=1,\,\,\,
\epsilon(p_1+p_2,q)=\epsilon(p_1,q)\epsilon(p_2,q) \,\,\, \text{
and }\,\,\, \epsilon(p,q_1+q_2)=\epsilon(p,q_1)\epsilon(p,q_2).$$

\vskip1ex

Let $G$ be an abelian group written multiplicatively. We say that
a color Lie superalgebra $L$  is $G$-graded (or  has a
$G$-grading) if $L$ is a direct sum of spaces $L^{(g)}$:
$$L=\bigoplus_{g\in G} L^{(g)},$$ such that $[L^{(g)}, L^{(h)}]\subset
L^{(gh)}$ and   $L^{(g)}$ are homogeneous with respect to the
$Q$-grading, that is
$$L^{(g)}=\bigoplus_{q\in Q} (L^{(g)}\cap L_q).$$
We will denote the subspace $L^{(g)}\cap L_q$ by $L_q^{(g)}$ and
the neutral element of $G$ by $e$. Then $L^{(e)}$ is the
homogeneous component corresponding to the neutral element $e\in
G$  and, consequently, $L_0^{(e)}=L^{(e)}\cap L_0$.

\vskip1ex

  The following results are almost straightforward
consequences of Theorem \ref{th1}.

\begin{theorem}\label{th5}
 Let  $Q$ and  $G$ be finite cyclic groups of coprime orders  $k$
 and $n$. Suppose that
 $L=\bigoplus_{q\in Q} L_q=\bigoplus_{g\in G} L^{(g)}$ is a $G$-graded color Lie
 superalgebra. If   $L_0^{(e)}=L^{(e)}\cap L_0$ has finite dimension $m$, then  $L$ has a homogeneous soluble
 ideal of finite $(n,k,m)$-bounded codimension
 and of $(n,k)$-bounded derived length. \end{theorem}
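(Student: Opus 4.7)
The plan is to reduce Theorem~\ref{th5} to Theorem~\ref{th1} by superimposing the $Q$- and $G$-gradings into a single finite cyclic grading, and then verifying the Lie type identity on bihomogeneous elements.

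First I would combine the two gradings. Since $|Q|=k$ and $|G|=n$ are coprime, the product $Q\times G$ is cyclic of order $kn$. The compatibility $L^{(g)}=\bigoplus_{q\in Q}(L^{(g)}\cap L_q)$ built into the definition of a $G$-graded color Lie superalgebra gives the fine decomposition
$$
L=\bigoplus_{(q,g)\in Q\times G} L_q^{(g)},
$$
which is a $(\Z/kn\Z)$-grading of $L$, with zero component $L_0^{(e)}$ of dimension $m$ by hypothesis.

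Next I would check that this grading turns $L$ into a $(\Z/kn\Z)$-graded Lie type algebra in the sense of the paper. For bihomogeneous $x\in L_p^{(g_1)}$, $y\in L_q^{(g_2)}$, $z\in L_r^{(g_3)}$, the color Jacobi identity reads $[[x,y],z]=[x,[y,z]]-\epsilon(p,q)\,[y,[x,z]]$. Applying skew-commutativity to the last term yields $[y,[x,z]]=-\epsilon(q,p+r)[[x,z],y]$, and using $\epsilon(p,q)\epsilon(q,p)=1$ together with bilinearity of $\epsilon$ collapses the coefficient to give
$$
[[x,y],z]=[x,[y,z]]+\epsilon(q,r)\,[[x,z],y].
$$
This is exactly identity~(\ref{tozh}) with $\alpha=1\ne 0$ and $\beta=\epsilon(q,r)\in \F^*$, so $L$ is indeed a $(\Z/kn\Z)$-graded Lie type algebra.

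Finally, Theorem~\ref{th1} applied to this graded Lie type algebra produces a homogeneous soluble ideal of finite $(kn,m)$-bounded codimension and $(kn)$-bounded derived length. Any ideal homogeneous with respect to the fine $Q\times G$-grading is automatically $Q$- and $G$-homogeneous, so the ideal is homogeneous in the sense required by the theorem. Rewriting $(kn,m)$ and $(kn)$ in terms of $n,k,m$ and $n,k$ yields the claimed bounds. The coprimality hypothesis is used in exactly one place, namely to identify $Q\times G$ with a cyclic group so that Theorem~\ref{th1} is available; this is the only genuinely new point in the argument, and it is also the one that would be the main obstacle were $Q\times G$ not cyclic, since the paper's proof machinery is set up for $\Z/n\Z$-gradings rather than for arbitrary finite abelian gradings.
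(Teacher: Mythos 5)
Your proof is correct and follows exactly the same route as the paper: superimpose the $Q$- and $G$-gradings into a single $(\Z/kn\Z)$-grading (cyclic because $k$ and $n$ are coprime), observe that the zero component is $L_0^{(e)}$ of dimension $m$, verify the Lie type identity~(\ref{tozh}) on bihomogeneous elements with $\alpha=1\ne0$, and invoke Theorem~\ref{th1}. Your explicit derivation of the coefficient $\beta=\epsilon(q,r)$ via skew-commutativity and bilinearity of $\epsilon$ is a bit more careful than the paper's one-line statement, but since the argument only needs $\alpha\ne0$ this is an inessential difference.
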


Recall that by definition, all automorphisms of a color Lie
superalgebra $L=\bigoplus_{q\in Q} L_q$  preserve the given
$Q$-grading: $L_q^{\varphi}\subseteq L_q$ for all $q\in Q$.

 \begin{theorem}\label{th6}  Let  $Q$  be a finite cyclic group of  order  $k$. Suppose that a  color Lie
 superalgebra $L=\bigoplus_{q\in Q} L_q$  admits an automorphism $\varphi$ of finite
 order $n$ relatively prime to $k$. If the fixed-point subalgebra $C_{L_0}(\varphi)$ of $\varphi$ in $L_0$
 is
 finite-dimensional of dimension $m$, then $L$ has  a homogeneous soluble
 ideal of finite $(n,k,m)$-bounded codimension
 and of $(n,k)$-bounded derived length.\end{theorem}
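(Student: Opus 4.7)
My plan is to reduce Theorem \ref{th6} to Theorem \ref{th5} by manufacturing a $G$-grading on $L$ with $G = \Z/n\Z$ out of the eigenspace decomposition of $\varphi$, and then verifying that this grading is compatible with the ambient $Q$-grading.

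First, I would extend scalars to $\bar\F = \F[\omega]$, where $\omega$ is a primitive $n$th root of unity (assuming $\operatorname{char}\F \nmid n$; the modular case is discussed below). On $\bar L = L\otimes_\F \bar\F$, the automorphism $\bar\varphi$ becomes diagonalizable of order $n$, yielding the eigenspace decomposition
\[\bar L = \bigoplus_{j\in\Z/n\Z}\bar L^{(j)},\qquad \bar L^{(j)} = \{a\in\bar L : \bar\varphi(a) = \omega^j a\},\]
which is a $(\Z/n\Z)$-grading because $\bar\varphi$ is an algebra automorphism and $[\bar L^{(i)},\bar L^{(j)}]\subseteq \bar L^{(i+j)}$. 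By the convention recalled just before Theorem \ref{th6}, $\bar\varphi$ preserves each $Q$-component $\bar L_q$, so every eigenspace further decomposes as $\bar L^{(j)} = \bigoplus_{q\in Q}(\bar L^{(j)}\cap \bar L_q)$. Thus $\bar L$ is a $G$-graded color Lie superalgebra in the sense of Theorem \ref{th5}, with $|G| = n$ coprime to $|Q| = k$, and its neutral component $\bar L^{(0)}\cap \bar L_0 = C_{L_0}(\varphi)\otimes_\F \bar\F$ has $\bar\F$-dimension $m$.

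An application of Theorem \ref{th5} then produces a homogeneous soluble ideal $\bar Z \lhd \bar L$ of $(n,k,m)$-bounded codimension and $(n,k)$-bounded derived length. To descend back to $L$ I would use Galois averaging: the Galois group $\Gamma = \operatorname{Gal}(\bar\F/\F)$ has order at most $n$, and each $\sigma \in \Gamma$ permutes the eigenspaces $\bar L^{(j)}$ (sending $\bar L^{(j)}$ to $\bar L^{(jr_\sigma)}$ where $\sigma(\omega) = \omega^{r_\sigma}$) while preserving the $Q$-grading defined over $\F$. Hence each $\sigma(\bar Z)$ is a $Q$-homogeneous soluble ideal of $\bar L$ with the same bounds. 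The sum $Z' = \sum_{\sigma\in\Gamma}\sigma(\bar Z)$ is $\Gamma$-invariant; iterating the standard fact $\operatorname{dl}(I+J)\le\operatorname{dl}(I)+\operatorname{dl}(J)$ for soluble ideals $I,J$ of a Lie type algebra, the derived length of $Z'$ remains $(n,k)$-bounded, and its codimension in $\bar L$ is at most that of $\bar Z$. Being $\Gamma$-invariant, $Z'$ descends to a $Q$-homogeneous ideal $Z \lhd L$ with $Z' = Z\otimes_\F \bar\F$, and $Z$ inherits both bounds.

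The principal obstacle is the case $\operatorname{char}\F = p$ with $p \mid n$, in which $\varphi$ need not be semisimple and no eigenspace decomposition is available. The standard workaround is to replace $\varphi$ by $\varphi^{p^a}$, where $p^a$ is the largest power of $p$ dividing $n$; this has order coprime to $p$ and so is semisimple. The delicate point is to control $\dim C_{L_0}(\varphi^{p^a})$ in terms of $m$ and $n$ only, which requires a separate argument exploiting the unipotent part of $\varphi$ acting on a finite-dimensional space. Once this reduction is made, the main line of argument proceeds as above, and all the bounds obtained from Theorem \ref{th5} transfer to $L$ without deterioration beyond $(n,k,m)$-boundedness.
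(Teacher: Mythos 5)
Your treatment of the case $\operatorname{char}\Bbb F\nmid n$ is essentially the paper's proof: extend the field by a primitive $n$th root of unity, observe that the $\varphi$-eigenspace decomposition is a $(\Bbb Z/n\Bbb Z)$-grading compatible with the $Q$-grading because $\varphi$ preserves it, note that the component indexed by $(0,e)$ is $C_{L_0}(\varphi)\otimes\Bbb F[\omega]$ of dimension $m$, and apply Theorem~\ref{th5}. Your descent back to $L$ is heavier than necessary: instead of Galois averaging (summing the $\Gamma$-conjugates of the ideal, bounding the derived length of the sum, and invoking Galois descent), the paper simply intersects: if $Z$ is the homogeneous soluble ideal of $L\otimes\Bbb F[\omega]$ produced by Theorem~\ref{th5}, then $L\cap Z$ is a $Q$-homogeneous soluble ideal of $L$ of the same derived length, and its codimension over $\Bbb F$ is at most $[\Bbb F[\omega]:\Bbb F]$ times the codimension of $Z$, hence still $(n,k,m)$-bounded. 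Your averaging route does work (the facts you invoke are all valid here), but it buys nothing over the one-line intersection argument.

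The genuine gap is the modular case $p=\operatorname{char}\Bbb F$ with $p\mid n$, which you explicitly leave open: you say that bounding $\dim C_{L_0}(\varphi^{p^a})$ ``requires a separate argument'' but do not supply it. This is exactly where the paper uses Lemma~\ref{p-p}: a linear transformation of order $p^a$ of a vector space over a field of characteristic $p$ whose fixed-point subspace has dimension $m$ acts on a space of dimension at most $mp^a$ (since $(\psi-1)^{p^a}=\psi^{p^a}-1=0$, every Jordan block has size at most $p^a$ and contributes exactly one fixed vector). Concretely, write $\langle\varphi\rangle=\langle\psi\rangle\times\langle\chi\rangle$ with $\psi$ the $p$-part of order $p^a$ and $\chi$ the $p'$-part (which generates the same subgroup as your $\varphi^{p^a}$); the subalgebra $C_{L_0}(\chi)$ is $\psi$-invariant, its $\psi$-fixed points lie in $C_{L_0}(\varphi)$ and so have dimension at most $m$, whence $\dim C_{L_0}(\chi)\le mp^a\le mn$ by the lemma. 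One then replaces $\varphi$ by $\chi$, whose order divides $n$ and is prime to both $p$ and $k$, and runs your main argument with this new $(n,m)$-bounded fixed-point dimension. Note also that your phrasing ``the unipotent part of $\varphi$ acting on a finite-dimensional space'' has the logic backwards: $C_{L_0}(\chi)$ is not known to be finite-dimensional in advance; its finite-dimensionality, with an explicit bound, is the conclusion drawn from the finite-dimensionality of the fixed points of the unipotent $\psi$ acting on it.
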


In \cite{brgr} Bergen and Grzeszczuk  proved that if a color Lie
superalgebra  $L=\bigoplus_{q\in Q} L_q$, where $Q$ is a finite
abelian (not necessarily cyclic) group,  admits an automorphism of
finite order such that $C_L(\varphi)=0$, then $L$ is soluble. At
present, we do not know if this result can be extended to the case
of $\mathrm{dim}\,C_L(\varphi)=m$. The hypothesis is that $L$
contains a homogeneous soluble ideal of finite codimension with
bounds that do not depend on $|Q|$.

\vskip1ex It is clear that a Lie superalgebra is also a color Lie
superalgebra with $Q=Z_2$. In this particular case Theorems
\ref{th5} and \ref{th6} take the following forms.

\begin{corollary}\label{co1}
  Let  $G$ by a finite cyclic group of odd order $n$ and let
 $L=L_0\oplus L_1=\bigoplus_{g\in G} (L_0^{(g)}\oplus L_1^{(g)})$ be a  $G$-graded Lie
 superalgebra over an arbitrary field  $F$,  that is  $[L^{(g)},\, L^{(h)}]\subseteq
L^{(gh)}$ and  $L^{(g)}=\big(L^{(g)}\cap L_0\big)\oplus
\big(L^{(g)}\cap L_1\big)$. If $L_0^{(e)}= L^{(e)}\cap L_0$ has
finite dimension $m$, then $L$ has a  homogeneous soluble  ideal
of finite $(n,m)$-bounded codimension  and of $n$-bounded derived
length.
\end{corollary}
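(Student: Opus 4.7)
The plan is to deduce Corollary~\ref{co1} directly from Theorem~\ref{th5} by recognizing an ordinary Lie superalgebra as a color Lie superalgebra with $Q=\Z/2\Z$, and then checking that all the hypotheses of Theorem~\ref{th5} are satisfied under the assumption that $|G|=n$ is odd.

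First, I would observe that any Lie superalgebra $L=L_0\oplus L_1$ is a color Lie superalgebra with respect to the group $Q=\Z/2\Z$ and the skew-symmetric bilinear form $\epsilon(p,q)=(-1)^{pq}$ for $p,q\in\Z/2\Z$. Indeed, with this choice of $\epsilon$, the two axioms of a color Lie superalgebra, $xy=-\epsilon(p,q)yx$ and $x(yz)=(xy)z+\epsilon(p,q)y(xz)$, become exactly the defining identities $[a,b]=-(-1)^{\alpha\beta}[b,a]$ and $[a,[b,c]]=[[a,b],c]+(-1)^{\alpha\beta}[b,[a,c]]$ of a Lie superalgebra. The $\Z/2\Z$-bilinearity and skew-symmetry of $\epsilon$ are immediate from the properties of $(-1)^{pq}$.

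Second, I would check that the hypotheses of Theorem~\ref{th5} hold with this $Q$. By assumption $G$ is a finite cyclic group of order $n$ and $Q=\Z/2\Z$ is a finite cyclic group of order $2$. Since $n$ is odd, $\gcd(n,2)=1$, so the coprimality requirement is satisfied. The given grading conditions $[L^{(g)},L^{(h)}]\subseteq L^{(gh)}$ and $L^{(g)}=(L^{(g)}\cap L_0)\oplus (L^{(g)}\cap L_1)$ say precisely that $L=\bigoplus_{g\in G} L^{(g)}$ is a $G$-grading of the color Lie superalgebra $L=\bigoplus_{q\in Q} L_q$ in the sense of~\S\,2, with $L_q^{(g)}=L^{(g)}\cap L_q$. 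Finally, the neutral-component hypothesis is identical: $L_0^{(e)}=L^{(e)}\cap L_0$ has dimension $m$.

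Applying Theorem~\ref{th5} then yields a homogeneous soluble ideal of $L$ of $(n,2,m)$-bounded codimension and $(n,2)$-bounded derived length; since $2$ is an absolute constant, these bounds reduce to $(n,m)$-bounded codimension and $n$-bounded derived length, which is the claim. There is no real obstacle here, only a verification that the coprimality hypothesis of Theorem~\ref{th5} is forced by the assumption that $n$ is odd; this is precisely why the odd-order hypothesis on $G$ cannot be omitted when reducing to Theorem~\ref{th5}.
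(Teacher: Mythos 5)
Your proposal is correct and coincides with the paper's own derivation: the paper likewise treats $L=L_0\oplus L_1$ as a color Lie superalgebra with $Q=\Z/2\Z$ (with $\epsilon(p,q)=(-1)^{pq}$), uses the oddness of $n$ to get $\gcd(n,2)=1$, and applies Theorem~\ref{th5}, the constant $2$ being absorbed into the $(n,m)$-bounds. No further comment is needed.
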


\begin{corollary}\label{co2}   If a  Lie
 superalgebra $L=L_0\oplus L_1$  admits an automorphism $\varphi$ of finite
 odd order $n$ such that the fixed-point subalgebra $C_{L_0}(\varphi)$ of $\varphi$ in
$L_0$
 is
 finite-dimensional of dimension $m$, then $L$ has  a homogeneous soluble
 ideal of finite $(n,m)$-bounded codimension
 and of $n$-bounded derived length.
\end{corollary}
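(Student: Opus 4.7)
The plan is to derive Corollary~\ref{co2} as an immediate specialization of Theorem~\ref{th6} with the grading group $Q=\Z/2\Z$. First I would verify that every Lie superalgebra $L=L_0\oplus L_1$ is an instance of a color Lie superalgebra in the sense of \S\,2: take $Q=\Z/2\Z$ and define $\epsilon\colon Q\times Q\to F^*$ by $\epsilon(\alpha,\beta)=(-1)^{\alpha\beta}$. This $\epsilon$ is a skew-symmetric bilinear form, and with this choice the two defining identities of a Lie superalgebra, $[a,b]=-(-1)^{\alpha\beta}[b,a]$ and $[a,[b,c]]=[[a,b],c]+(-1)^{\alpha\beta}[b,[a,c]]$, become verbatim the color super-anticommutativity and color super-Jacobi identities.

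Next, I would check the remaining hypotheses of Theorem~\ref{th6}. Here the order of $Q$ equals $k=2$, and since $n$ is assumed \emph{odd}, $\gcd(n,k)=1$, so the coprimality hypothesis is satisfied. The automorphism $\varphi$ preserves the super-grading $L=L_0\oplus L_1$ by the standing convention for automorphisms of Lie superalgebras, which matches the convention from \S\,2 that automorphisms of a color Lie superalgebra preserve the $Q$-grading. Finally, the assumption $\dim C_{L_0}(\varphi)=m$ is literally the hypothesis on the fixed-point subalgebra in Theorem~\ref{th6}.

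Applying Theorem~\ref{th6} then produces a homogeneous soluble ideal of $L$ of $(n,k,m)$-bounded codimension and of $(n,k)$-bounded derived length. Since $k=2$ is an absolute constant, these bounds collapse to $(n,m)$-bounded codimension and $n$-bounded derived length, which is precisely the conclusion of Corollary~\ref{co2}. I do not anticipate any obstacle beyond the verification sketched above: all of the substantive combinatorial work is already packaged into Theorem~\ref{th6} (and ultimately into Theorem~\ref{th1}), and the only piece of data special to the Lie super case is the oddness of $n$, which is exactly what secures $\gcd(n,2)=1$ and so unlocks the application of Theorem~\ref{th6}.
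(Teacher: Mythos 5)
Your proposal is correct and matches the paper's own route exactly: the paper observes that a Lie superalgebra is a color Lie superalgebra with $Q=\Z/2\Z$ and then states Corollary~\ref{co2} as the direct specialization of Theorem~\ref{th6}, with the oddness of $n$ providing the required coprimality $\gcd(n,2)=1$. No further commentary needed.
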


\section{Preliminaries}

 We will use the square brackets $[\,,\,]$ for the multiplicative operation.  If $M,\, N$ are subspaces of an algebra $L$
then $[M,N]$ denotes the subspace, generated  by all the products
$[m,n]$ for $m\in M$, $n\in N$. If $M$ and $N$ are two-side
ideals, then $[M,N]$ is also a two-side ideal; if $H$ is a
(sub)algebra, then $[H,H]$ is its two-side ideal and, in
particular, its subalgebra.
 The  subalgebra  generated by  subspaces~$U_1,U_2,\ldots, U_k$ is
denoted by $\left<U_1,U_2,\ldots, U_k\right>$, and the two-side
ideal generated by~$U$ is denoted by ${}_{\rm
id}\!\left<U_1,U_2,\ldots, U_k\right>$.

\vskip1ex
 A simple product  $[a_1,a_2,a_3,\ldots, a_s]$ is by
definition the left-normalized product
$[...[[a_1,a_2],a_3],\ldots, a_s]$. The analogous notation is also
used for subspaces
$$[A_1,A_2,A_3,\ldots, A_s]=[...[[A_1,A_2],A_3],\ldots, A_s].$$

\vskip1ex
 The derived series of an algebra $L$ is defined as
$$L^{(0)}=L, \; \; \; \, \, \, \, \, \, L^{(i+1)}=[L^{(i)},L^{(i)}].$$

\vskip1ex If   $L=\bigoplus_{i=0}^{n-1}L_i $ is a  $({\Bbb Z}
/n{\Bbb Z} )$-graded algebra, elements of the $L_a$ are called
\textit{homogeneous} (with respect to this grading), and products
in homogeneous elements \textit{homogeneous products}. A subspace
 $H$  of $L$ is said to be \textit{homogeneous}
if $H=\bigoplus_{i=0}^{n-1} (H\cap L_i)$; then we set $H_i=H\cap
L_i$. Obviously, any subalgebra or an ideal generated by
homogeneous subspaces is
 homogeneous. A homogeneous subalgebra  can be regarded as a
$({\Bbb Z} /n{\Bbb Z} )$-graded algebra with the induced grading.
It follows that the terms of the derived series of $L$, the ideals
$L^{(k)}$, are also $({\Bbb Z} /n{\Bbb Z} )$-graded algebras with
induced grading $L^{(k)}_i=L^{(k)}\cap L_i$, and
$$L^{(k+1)}_i=\sum_{u+v\equiv i\,({\rm mod\,}n)
}[L^{(k)}_{u},\,L^{(k)}_{v}].$$

\vskip1ex
 By  property  (\ref{tozh}) if $L$ is a  $({\Bbb Z}
/n{\Bbb Z} )$-graded Lie type algebra over a field $\Bbb F$, then
for all homogeneous $a,b,c\in L$ there exist $0\neq\alpha\in \Bbb
F $ $\beta\in \Bbb F$ such that
$$[a,[b,c]]=\frac{1}{\alpha}\,[[a,b],c]-\frac{\beta}{\alpha}\,[[a,c],b].$$ Hence
 any (complex) product in certain homogeneous elements in $L$  can be expressed as a linear combination of simple products of
the same length in the same elements.  It follows that the
(two-side) ideal in  $L$ generated by a homogeneous subspace $S$
is the  subspace generated by all the homogeneous simple products
$[x_{i_1},y_j,x_{i_2},\ldots, x_{i_t}]$ and $[y_j,x_{i_1},
x_{i_2},\ldots x_{i_t}]$, where $t\in \Bbb N$ and $x_{i_k}\in L,
y_j\in S$ are homogeneous elements. In particular, if $L$ is
generated by a homogeneous subspace $M$, then  its space is
generated by simple homogeneous products in elements of~$M$.

\section{Solubility criterion}

In this section we will use the next shortened notation:
$$[\beta^{\,s},\,\alpha,\,\beta^{\,r}]=[\underbrace{\beta,\ldots, \beta}_{s},\alpha,\underbrace{\beta,\ldots
\beta}_{r}] =[...[[[\underbrace{\beta,\ldots,
\beta}_{s}],\alpha],\underbrace{\beta],\ldots, \beta}_{r}]$$ where
$\alpha$ and $\beta$ are subspaces  of an algebra~$L$. In
particular, $[\beta^{\,s},\beta^{\,r}]=[\beta^{\,s+r}]\neq
[\beta^{\,s},\,[\beta^{\,r}]]$.

\begin{proposition}\label{p1} There exists a function  $f:\Bbb
N\times \Bbb N \rightarrow \Bbb N$ such that for any   $(\Bbb Z
/n\Bbb Z )$-graded Lie type algebra $L$  its
 $f(m,n)$th term of the derived series  $L^{(f(m,n))}$ is contained in the subalgebra generated by the
 subspace
$$\sum_{t=0}^{m}[L_0^t,L,L_0^{m-t}],$$ where $L_0$ is the
zero component.\end{proposition}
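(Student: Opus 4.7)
My plan is induction on $m$, using the Bergen--Grzeszczuk theorem (a $(\mathbb{Z}/n\mathbb{Z})$-graded Lie type algebra with $L_0=0$ is soluble of $n$-bounded derived length, say $\kappa(n)$) as the main external input. The base case $m=0$ is immediate: the subspace $\sum_{t=0}^{0}[L_0^{\,t},L,L_0^{\,-t}]$ is $L$ itself, so its generated subalgebra is $L$ and $f(0,n)=0$ suffices.

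For the inductive step, assume $f(m-1,n)$ has already been found and write $S_r=\sum_{t=0}^{r}[L_0^{\,t},L,L_0^{\,r-t}]$. By~\eqref{tozh}, any homogeneous nested product can be expanded as a linear combination of left-normalized simple products in the same homogeneous factors of the same total length; hence $L^{(k)}$ is spanned by left-normalized simple homogeneous products of length at least $2^k$. I would then argue, in the spirit of Khukhro's Lie-ring solubility criterion of \cite{kh3}, that each such left-normalized product of sufficient (but bounded) length either already exhibits a single non-$L_0$ factor flanked by a total of $m$ homogeneous $L_0$-factors across its two sides (so it lies in $S_m\subseteq\langle S_m\rangle$), or can be rewritten via~\eqref{tozh} as a linear combination of such products. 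Combining this with the inductive hypothesis at level $m-1$, and applying Bergen--Grzeszczuk to the graded structure in which $L_0$ acts trivially modulo $\langle S_m\rangle$, should deliver a bound of the form $f(m,n)\approx f(m-1,n)+\kappa(n)+C$ for an absolute constant $C$.

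The main obstacle I anticipate is the absence of anticommutativity. In the Lie-algebra arguments of \cite{kh3, ma-khu}, one may freely swap adjacent factors (up to sign) and thus consolidate nearby $L_0$-factors onto a single side of a pivot; here~\eqref{tozh} only rearranges parentheses, not factor order. This is precisely why $S_m$ is written as a sum indexed by the split $t$ of $L_0$-factors between the two sides of the pivot rather than as a single subspace, and it forces the bookkeeping to track this split with care. The delicate step will be designing an invariant — most plausibly the multiset of positions of $L_0$-factors around the unique non-$L_0$ pivot in each summand of a left-normalized representation — that is either preserved or improved under each application of~\eqref{tozh} and that forces the total flanking count to reach $m$ within an $(m,n)$-bounded depth of bracketing. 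This is the combinatorial core that the introduction flags as requiring re-proof from scratch, without the aid of anticommutativity.
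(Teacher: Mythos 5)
Your base case is fine, but the inductive step from $m-1$ to $m$ is not a proof: it is a restatement of the difficulty, and the two concrete mechanisms you offer for it both fail. First, the assertion that a long left-normalized product containing a non-$L_0$ pivot flanked by $m$ homogeneous $L_0$-factors ``lies in $S_m\subseteq\langle S_m\rangle$'' is false unless the product has length exactly $m+1$: only a \emph{subproduct} lies in $S_m$, and since $\langle S_m\rangle$ is a subalgebra but in general not an ideal, multiplying that subproduct by the remaining (arbitrary) factors does not keep you inside $\langle S_m\rangle$. Second, ``applying Bergen--Grzeszczuk to the graded structure in which $L_0$ acts trivially modulo $\langle S_m\rangle$'' is not well defined for the same reason: $\langle S_m\rangle$ is not an ideal, so there is no quotient graded algebra in which $L_0$ becomes zero. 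If you replace it by an ideal --- say ${}_{\rm id}\!\left<L_0\right>$, which is what Bergen--Grzeszczuk actually gives via $L^{(\kappa(n))}\subseteq{}_{\rm id}\!\left<L_0\right>$ --- the passage from $m-1$ to $m$ still breaks down: an element of $\langle S_{m-1}\rangle$ is a sum of products of several $S_{m-1}$-blocks, and multiplying it by a single element of $L_0$ and rewriting with~(\ref{tozh}) upgrades only \emph{one} of those blocks to $S_m$ (this is Lemma~\ref{l1}(a): $[S_{m-1},L_0]+[L_0,S_{m-1}]\subseteq S_m$), leaving all the other blocks at level $m-1$; the result lies in neither $\langle S_m\rangle$ nor anything your induction controls. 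Consequently the recursion $f(m,n)\approx f(m-1,n)+\kappa(n)+C$ has no justification, and the ``invariant'' you defer to future work (positions of $L_0$-factors around ``the unique non-$L_0$ pivot'') already presupposes a single pivot, whereas the spanning products of $L^{(k)}$ have many non-$L_0$ factors; this deferred step is precisely the content of the proposition.

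For comparison, the paper's proof does not induct on $m$ at all and does not invoke Bergen--Grzeszczuk (the proposition in fact contains that solubility statement as the case $L_0=0$). The parameter $m$ stays fixed, and the induction is a Kreknin-type elimination of the grading components $k=1,\dots,n$, via the pair of inclusions~(\ref{1usl}) and~(\ref{2usl}), driven by Lemma~\ref{l2} (if $i+j\equiv k \pmod n$ with $0\le i,j\le n-1$, then $i$ and $j$ are both greater or both smaller than $k$) together with the closure properties of Lemma~\ref{l1}. Crucially, the intermediate subspaces $S_k(r)$ carry a geometric surplus of $L_0$-factors, $r=m2^{n-k}$, because at each component-elimination step the available stock of $L_0$-factors must be split between the two parts of a product, roughly halving it; only at the last step $k=n$ does one arrive at $S_k(m)$. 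Your sketch contains no analogue of the index-arithmetic lemma, of the component-by-component elimination, or of this exponential surplus, and an ``add one $L_0$-factor per step'' scheme cannot reproduce that mechanism.
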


\begin{proof} For the convenience we introduce the following notation:
$$S_k(r)=\sum_{t=0}^{r}[L_0^t, L_k, L_0^{r-t}].$$ In the next
auxiliary lemma we establish some elementary properties of the
subspaces
 $S_k(r)$.

\begin{lemma}\label{l1} The following inclusions hold:

\vskip1ex {\rm (a)} $[S_k(r),\,L_0]+ [L_0,\,S_k(r)]\subseteq
S_k(r+1);$

\vskip1ex  {\rm (b)} $S_k(r+1)\subseteq S_k(r);$

\vskip1ex  {\rm (c)} $[\left< S_1(r),\ldots, S_{k-1}(
r),L_{k+1},\ldots, L_{n-1}\right>,\, L_0] +[
L_0,\,\left<S_1(r),\ldots, S_{k-1}( r),L_{k+1},\ldots,
L_{n-1}\right>]\subseteq \left<S_1(r),\ldots, S_{k-1}(
r),L_{k+1},\ldots, L_{n-1}\right>.$
\end{lemma}

\begin{proof}

(a) By definition  $$[S_k(
r),\,L_0]=\sum_{t=0}^{r}[L_0^t,\,L_k,\,L_0^{r-t},\,L_0]\subseteq
S_k (r+1).$$ By (1) for all homogeneous elements $a,b,c$ there
exist $0\neq\alpha, \beta \in \Bbb F$ such that
$$[a,[b,c]]=\frac{1}{\alpha}[a,b,c]-\frac{\beta}{\alpha}[a,c,b].$$ It
follows that a
 product $[a, [b_1,b_2,\ldots b_s]]$ in homogeneous elements can be expressed as a linear combination of simple
products  $[a, b_{i_1},\ldots, b_{i_s}]$ of the same length in the
same elements.  Hence
 $$[L_0,\,S_k(r)]=[L_0,\,\sum_{t=0}^{r}[L_0^t,\,L_k,\,L_0^{r-t}]]=\sum_{t=0}^{r}[L_0,[L_0^t,\,L_k,\,L_0^{r-t}]]\subseteq
\sum_{t=0}^{r}[L_0^{t+1},\,L_k,\,L_0^{r-t}]\subseteq S_k (r+1)$$
and thus (a) holds.

 \vskip1ex (b) Since $[L_0,L_k]\leq L_k$, $[L_k,L_0]\leq L_k$ and
$[L_0^2]\leq L_0,$  we have
$$S_k(
r+1)=\sum_{t=0}^{r+1}[L_0^t,L_k,L_0^{r+1-t}]=[L_k,L_0^{r+1}]+[L_0,L_k,L_0^r]+
\sum_{t=0}^{r-1}[L_0^{2+t},L_k,L_0^{r-1-t}]\subseteq$$
$$\subseteq [L_k,L_0^{r}]+[L_k,L_0^{r}]+ \sum_{t=0}^{r-1}
[L_0^{t+1},L_k,L_0^{r-1-t}] \subseteq [L_k,L_0^{r}]+[L_k,L_0^{r}]+
\sum_{t=1}^{r} [L_0^{t},L_k,L_0^{r-t}]\subseteq    S_k(r).$$

\vskip1ex (c) An element of the subalgebra $$\left< S_1(r),\ldots,
S_{k-1}( r),L_{k+1},\ldots, L_{n-1}\right>$$ is a linear
combination of simple products in  elements from $S_i( r)$,
$i=1,2,\ldots, k-1$ and  $L_i$, $i=k+1,\ldots, n-1$ of the form
$$[a_1,a_2,\ldots, a_q],
$$
where each $a_i$ is an   element of $S_i( r)$ or $L_i$.  Let
$l_0\in L_0$. The product $\big[l_0,[a_1,a_2,\ldots, a_q]\big]$
from $[L_0,\,\left< S_1(r),\ldots, S_{k-1}( r),L_{k+1},\ldots,
L_{n-1}\right>]$ can be represented as a linear combination of
products of the form
$$[l_0,a_{i_1},\ldots, a_{i_q}],
$$
 where  $a_{i_j}\in\{a_1,\ldots a_q\}$. In view of assertion (a)
the element $[l_0,a_{i_1}]$ belongs to the same subspace
($S_{i_1}(r)$ or $L_{i_1}$) as  $a_{i_1}$. In both cases the
product  $[l_0,a_{i_1},\ldots, a_{i_q}]$ belongs to the subalgebra
$$\left< S_1(r),\ldots, S_{k-1}( r),L_{k+1},\ldots,
L_{n-1}\right>.$$

Consider now  a product $$\big[[a_1,a_2,\ldots a_q],l_0\big]$$
from
$$[\left<S_1(r),\ldots, S_{k-1}( r),L_{k+1},\ldots,
L_{n-1}\right>,\, L_0].$$  By  (\ref{tozh}), we transfer the
element $l_0$ to the left aiming to obtain a linear combination of
elements of the form
$$[a_1,a_2,\ldots, [a_j,l_0],\ldots, a_q].
$$
In view of  assertion (a), each  element $[a_j,l_0]$  belongs to
the same subspace  ($S_j(r)$ or $L_j$) as  $a_j$. Hence, the
products  $[a_1,a_2,\ldots, [a_i,l_0],\ldots, a_q]$ are contained
in the subalgebra
 $$\left< S_1( r),\ldots, S_{k-1}( r),L_{k+1},\ldots,
L_{n-1}\right>$$ as well.
\end{proof}

You can find the  proof of the next elementary lemma in~\cite{kr}
(see, also~\cite[Lemma 4.3.5]{kh-book}).

\begin{lemma}\label{l2} If $i+j\equiv  k\, {\rm (mod}\, n)$ for $0\leq i,\, j \leq
n-1$, then the numbers  $i$ and $j$ are both greater than $k$ or
less than $k$.\end{lemma}

We now prove Proposition~\ref{p1}. We establish that for some
functions  $h_i :\Bbb N\times \Bbb N \rightarrow \Bbb N$, \
$i=1,\,2$, and  for $k=1,\,2,\,\ldots ,n$ the following inclusions
hold:

\begin{equation}\label{1usl}
\hskip2em L^{(h_1(m,k))}\cap L_k\,\subseteq
\left<S_1(m2^{n-k}),\ldots ,S_{k-1}(m2^{n-k}),\, L_{k+1}, \ldots ,
L_{n-1} \right>\,\,+\,\,S_k( m2^{n-k}),
\end{equation}
\begin{equation}\label{2usl}
L^{(h_2(m,k))}\,\subseteq \left<S_1( m2^{n-k}),\ldots ,
S_k(m2^{n-k}),\, L_{k+1}, \ldots, L_n\right>. \end{equation} We
extend  the statement~(\ref{2usl}) to the case $k=0$ and consider
the equality $L=\bigoplus _{i=0}^{n-1}L_{i}$ as the base of
induction for~(\ref{2usl}) with $h_2(m,0)=0$. At each step for a
given $k$ we first prove~(\ref{1usl}) by using  the induction
hypothesis for~(\ref{2usl}). Then  the statement~(\ref{2usl}) is
deduced from~(\ref{1usl}) for $k$ and the induction hypothesis
for~(\ref{2usl}).

\vskip1ex In order to establish (\ref{1usl}), we  prove the
following chain of inclusions:
\begin{equation} \label{3usl}L^{(r(h_2(m,k-1)+1))}\cap L_k \,
\,\subseteq \,\,\left<S_1( m2^{n-k}),\ldots ,S_{k-1}(m2^{n-k}), \,
L_{k+1}, \ldots , L_{n-1}\right> \,\,+\,\,S_k(r),\end{equation}
where $r=1,2,\ldots,m2^{n-k}$. The statement (\ref{1usl}) will
follow from~(\ref{3usl}).

\vskip1ex Let  $r=1$. If  $a\in L^{(h_2(m,k-1)+1)}\cap L_k$, then
$a$ is equal to a linear combination of products of the form
$[b,c]$, where $b,c\in L^{(h_2(m,k-1))}$ and $b,c$ are
homogeneous. By the induction hypothesis  the
inclusion~(\ref{2usl}) holds for  $k-1$; hence, the elements $b$
and $c$, and therefore  $[b,c]$, are contained in the subalgebra
$$\left<S_1(m2^{n-k+1}),\ldots ,S_{k-1}(m2^{n-k+1}),
L_k, \ldots , L_n\right>.$$ Then $[b,c]$ can be expressed as a
linear combination of simple products in homogeneous elements of
the subspaces indicated inside the angle brackets.  Every such
simple product has the form  $[u,v]$, where $u$ is its initial
segment and $v$ is the last element, which is contained in one of
the indicated subspaces.   If
 $v\in L_q$ for $q\in \{ k, \, k+1, \ldots , n-1, \, n\} $, then
$u\in L_s$ for $ s+q\equiv k\, ({\rm mod \,}n)$, where $s\in \{
0,\,1,\ldots n-1\}$. If $k<q<n$, then $k<s<n$ by Lemma~\ref{l2}
and, consequently,  $[u,v]$ is contained in  $ \left< L_{k+1},
L_{k+2}, \ldots , L_{n-1} \right> $ and  therefore in the right
side of~(\ref{3usl}). If $q=k$ or $q=n$, then, respectively, $s=0$
or $s=k$; in both cases  $[u,v]$ lies in the subspace
$[L_k,L_0]+[L_0,L_k]=S_k(1)$, which is also contained in the right
side of~(\ref{3usl}) in the case $r=1$ under consideration. Let
$v\in S_q (m2^{n-k+1})$
 for $q\in \{ 1,\,2,\ldots , k-1\} $. Then
 $$[u,v]\in
 [L_s,\,S_q(m2^{n-k+1})]\subseteq \sum_{i+j=m2^{n-k+1}}
 \big[L_s,\,[L_0^{i},\,L_q,\,L_0^{j}]\big]\subseteq \sum_{i+j=m2^{n-k+1}}
 [L_s,L_0^{i},\,L_q,\,L_0^{j}].
 $$
If $j\geq 1$,  the products $[L_s,L_0^{i},L_q,L_0^{j}]$ are
obviously contained in  $[L_k,L_0]\subseteq S_k(1)$. If $j=0$, in
the summand $[L_s,L_0^{m2^{n-k+1}},L_q]$ we move $L_q$ to the left
by~$(\ref{tozh})$.  At the first step, say, we get
$$[L_s,L_0^{m2^{n-k+1}},L_q]\subseteq
[L_s,L_0^{m2^{n-k}},\,L_q,\,L_0]+\big[L_s,\,L_0^{m2^{n-k}},\,[L_0,L_q]\big].$$
The first summand  lies in $[L_k,L_0]$, which, in turn, is
contained in the second summand of the right part of~(\ref{3usl}).
In the second summand  the subspace $[L_0,L_q]$ takes over the
role of $L_q$ and is also moved to the left, over the $L_0$. As a
result we obtain a sum of  products that are  contained in
$[L_k,L_0]$ and the summand
$[L_s,L_0^{m2^{n-k}},\underbrace{[L_0[L_0[\ldots[L_0}_{m2^{n-k}}L_q]...]]]].$
We assert that the last summand  lies in the first summand of the
right side of~(\ref{3usl}). In fact,
$$[L_s,L_0^{m2^{n-k}}]\subseteq
\sum_{t=0}^{m2^{n-k}}[L_0^t,L_s,L_0^{m2^{n-k}-t}]=S_s(m2^{n-k})$$
and
$$\underbrace{[L_0[L_0[\ldots[L_0}_{m2^{n-k}}L_q]...]]]\subseteq
\sum_{t=0}^{m2^{n-k}}[L_0^t,L_q,L_0^{m2^{n-k}-t}]=S_q(m2^{n-k}),$$
where $1\leq s,\,q\leq k-1$.

\vskip1ex
  For $r>1$ we apply the established statement~(\ref{3usl}) for $r=1$ to the algebra
$L^{((r-1)(h_2(m,k-1)+1)}$ with induced grading instead of~$L$:
$$L^{(r(h_2(m,k-1)+1))}\cap
L_k\,=\,\left(L^{((r-1)(h_2(m,k-1)+1))}\right)^{(h_2(m,k-1)+1)}\,\cap
\,L_k\,\,\subseteq$$
$$\subseteq \left<S_1(m2^{n-k}),\,
 \ldots ,
S_{k-1}(m2^{n-k}),\, L_{k+1}, \ldots , L_{n-1}\right>+$$
$$\;\;\;\;\;\;\;\;\,\,+\,\,
[L^{((r-1)(h_2(m,k-1)+1)} \cap \,L_k,\,\,
L_0]\,\,+\,\,[L_0,\,\,L^{((r-1)(h_2(m,k-1)+1))} \cap \,L_k].
$$
By
using the obvious inclusions, we enlarged the first summand and
get the same as in~(\ref{3usl}). We now apply the induction
hypothesis for $r-1$ to the second and third summands:
$$[L^{((r-1)(h_2(m,k-1)+1))} \cap \,L_k,\,\,L_0]\,\,\subseteq
$$
\begin{equation}\label{v1} \subseteq
\big[\left< S_1(m2^{n-k}),\,
 \ldots ,S_{k-1}(m2^{n-k}),\, L_{k+1}, \ldots ,
 L_{n-1}\right>,\,\, L_0\big]
\,+\,[S_k(r-1),\, L_0] \end{equation}
$$
[L_0,\,\,L^{((r-1)(h_2(m,k-1)+1))} \cap \,L_k]\,\,\subseteq
$$
\begin{equation}\subseteq \label{v2} \big[L_0, \,\, \left< S_1(m2^{n-k}),\,
 \ldots ,S_{k-1}(m2^{n-k}),\, L_{k+1}, \ldots,
 L_{n-1}\right>\big]
\,+\,[L_0,\,S_k(r-1)]. \end{equation} In view of  (a) and (b) of
Lemma~\ref{l1}  the right parts of~(\ref{v1}) and~(\ref{v2}) are
contained in the right part of~(\ref{3usl}). This completes the
proof of  the inclusions~(\ref{3usl}) for all~$r$.

\vskip1ex For  $r=m2^{n-k}$ and  $h_1(m,k)=m2^{n-k}(h_2(m,k-1)+1)$
the inclusion~(\ref{3usl}) is exactly the inclusion~(\ref{1usl})
for~$k$.

\vskip1ex We now put   $h_2(m,k)=h_2(m,k-1)+h_1(m,k)$ and prove
the assertion~(\ref{2usl}) for this value of the function
$h_2(m,k)$. We have $L^{(h_2(m,k))}=\left( L^{(h_1(m,k))}
\right)^{(h_2(m,k-1))}$. We apply  statement~(\ref{2usl}) for
$k-1$ to the subalgebra $L^{(h_1(m,k))}$ with the inducing
grading:
$$
\left( L^{(h_1(m,k))} \right)^{(h_2(m,k-1))}\subseteq
$$
\begin{equation}\label{v3}\subseteq \left< S_1(m2^{n-k}),\,
 \ldots , S_{k-1}(m2^{n-k}),
         \,\,L^{(h_1(m,k))}\cap  L_{k},\,\, L_{k+1},
\ldots , L_{n-1}, L_n\right>. \end{equation} Here we have used the
inclusions $ S_i(m2^{n-k+1})\subseteq S_i(m2^{n-k})$ for $
i=1,\,2,\ldots ,k-1$ and  $ L^{(h_1(m,k))}\cap L_{j}\subseteq L_j$
for $j=k+1,\ldots ,n-1,\,n$. Substituting the established
inclusion (\ref{1usl}) for $L^{(h_1(m,k))}\cap  L_{k}$ in
(\ref{v3}) after the removal of the  repetitions  we obtain
$$L^{(h_2(m,k))}=\left( L^{(h_1(m,k))} \right)^{(h_2(m,k-1))}\,\,\subseteq $$
$$\subseteq\,\,\big\langle S_1(m2^{n-k}),\,
 \ldots , S_{k-1}(m2^{n-k}),
\, S_k(m2^{n-k}) ,\, L_{k+1},  \ldots , L_{n-1}, L_n\big\rangle
.$$ This is the required inclusion (\ref{2usl}) for $k$.

\vskip1ex For $k=n$ the inclusion~(\ref{2usl}) takes the form
$$
L^{(h_2(m,n))}\,\,\subseteq \,\,\big\langle S_1(m),\, S_2(m),
\ldots , S_{n-1}(m),\, S_{n}(m)\big\rangle,$$ which is the
statement of Proposition~\ref{p1} with $f(m,n)=h_2(m,n)$.
\end{proof}

\section{Representatives and Generalized centralizers}\label{s-rep-cen}

In this section we construct the generalized centralizers which
are certain subspaces of the homogeneous components  $L_i$, $i\neq
0$:
$$L_i=L_i(0)\geq L_i(1)\geq \cdots \geq L_i(N(n)).$$
Constructing the generalized centralizers is carried out by
induction on the level, which is a parameter taking integer values
from $0$ to some $n$-bounded number  $N=N(n)$. Simultaneously with
the construction of these subspaces certain homogeneous elements,
called representatives, are being fixed.

\vskip1ex {\bf Index Convention.} {\it In what follows an element
of the homogeneous compo\-nent~$L_i$ will be denoted by a small
letter with index $i$
 and the index will only indicate
the homogeneous compo\-nent where this element belongs: $x_i \in
L_i$. To lighten the notation we will not be using numbering
indices for elements of the $L_j$, so that different
 elements can be denoted by the same symbol when
 it only matters which homogeneous compo\-nents these
elements belong to. For example, $x_3$ and $x_3$ can be different
elements of $L_3$. These indices will be
 regarded as residues modulo~$n$; for example, $a_{-i}\in
L_{-i}=L_{n-i}$.}

\vskip1ex  The {\it pattern\/} of a product in homogeneous
elements (of~$L_i$) is its bracket structure together with the
arrangement of the indices under the Index Convention. The {\it
length\/} of a pattern is the length of the product. The product
is said to be the {\it value of its pattern} on the given
elements. For example, $[a_1,[b_2,b_2]]$ and $[x_1,[z_2,y_2]]$ are
values of the same pattern of length~3. Note that under the Index
Convention the elements $b_2$ in the first product can be
different.

\vskip1ex Let $j\ne 0$. For every ordered tuple  of elements $\vec
x=(x_{i_1},\dots ,x_{i_k}),$
 $x_{i_s}\in L_{i_s}$,  $i_s\ne 0$, such that
$j+i_1+\dots + i_k \equiv 0 \,(\mbox{mod}\, n)$ we define the
mappings
 $\vartheta _{t,\vec x}: L_j \rightarrow L_0$,
$t=0,\ldots, k$:
$$\vartheta _{t,\vec x}:\, \, y_j\rightarrow
[x_{i_1},\,x_{i_2},\,\ldots,\, x_{i_t},\, y_j,
\,x_{i_{t+1}},\ldots,\, x_{i_k}].$$ By linearity they   are
homomorphisms of the subspace
 $L_j$ into
$L_0$. Since  $\dim L_0\leq m$, we have ${\rm dim}\,
(L_j/\mbox{Ker}\, \vartheta _{t,\vec x})\leq m$ for all $\vec x$,
$t$.

 \vskip1ex
{\bf Definition of level 0.} At level 0 we only fix
representatives of level $0$. For each  pattern ${\bold
P}=[\ast_i,\,\,\ast_{-i}]$ of a product of length $2$ with
non-zero indices $ \pm i\neq 0$, among   all values
 of ${\bold P}$ on  homogeneous elements of $L_{i},$ $i\ne 0$, we
choose elements $c \in L_0$ that form a basis of the subspace
spanned by all values of ${\bold P}$ on  homogeneous elements of
 $L_{i}$, $i\ne 0$. The same is done for every pattern
 ${\bold P}=[\underbrace{\ast_i,\,\ldots, \ast_i}_n]$ of a simple product of length $n$ with one and the
same index $i\ne 0$ repeated $n$ times.  The elements of
$L_{j}$,\, $j\neq 0$, involved in these fixed representations of
the products $c$ are called
 {\it representatives of level $0$} and denoted by  $x_j(0)\in L_j$
(under the Index Convention). Since the total number of patterns
${\bold P}$
 under consideration is $n$-bounded and the dimension of the subspace  $L_0$ is at
 most  $m$, the number of representatives of level $0$ is
$(m,n)$-bounded.

\vskip1ex

 Before we describe the induction step we choose an
increasing sequence of positive integers $W_1<W_2<\ldots <W_N$,
all of which are  $n$-bounded but sufficiently large compared to
$n$-bounded values of some other parameters of the proof.
Moreover, the differences   $W_{k+1}-W_k$ must be also
sufficiently large in the same sense (see  \S\, \ref{parametry}
for the exact values of theses parameters).

\vskip1ex

 {\bf Definition of level
$s>0$.} Unlike the level $0$, representatives of level  $s>0$ are
defined in two different ways and are accordingly called either
{\it $b$-representatives} or {\it $x$-representatives.} Suppose
that we have already fixed
 $(m,n)$-boundedly  many representatives of level $<s$, which are
 either $x$-representatives of the form
  $ x_{i_k}(\varepsilon_k)\in L_{i_k}(\varepsilon_k)$ or  $b$-representatives of the form
 $ b_{i_k}(\varepsilon_k)\in L_{i_k}$, $i_k\ne 0$, of levels $\varepsilon_k< s$.

\vskip1ex
 We define the  {\it generalized centralizers of level  $s$} (or, for short,
centralizers of level $s$), by setting for each non-zero $j$
$$L_j(s)=\bigcap_{\vec z}\,\bigcap_{t}\mbox{Ker}\, \vartheta  _{t,\vec z},$$
where  $\vec z=\left( z_{i_1}(\varepsilon_1),\, \ldots ,\,
z_{i_k}(\varepsilon_k)\right) $ runs over all possible ordered
tuples of all lengths
 $k\leq W_s$ consisting of representatives of (possibly different)
levels $<s$ (i.~e. $z_{i_u}(\varepsilon_u)$ denote elements of the
form
 $x_{i_u}(\varepsilon_u) $  or $b_{i_u}(\varepsilon_u)$, $\varepsilon_u<s$,
in any combination) such that
$$ j+ i_1+\cdots + i_k\equiv 0\, (\mbox{mod}\, n),$$ and $t=0,\ldots, k$.
 The elements of  $L_j(s)$ are also called  {\it
centralizers of level $ s$} and denoted by
 $y_j(s)$ (under the Index Convention).

 The number of representatives of all levels $<s$ is
 \ $(m,n)\hs$-bounded, the tuples $\vec z$ have  $n\hs$-bounded length,
 and  ${\rm dim}\,L_j/\mbox{Ker}\, \vartheta _{t,\vec
z}\leq m$ for all $\vec z$, $t$.  Hence the intersection here is
taken over an   $ (m,n) \hs$-bounded number of subspaces of
$m\hs$-bounded codimension in   $L_j$, and therefore
 $L_j(s)$
 also has  $(m,n)\hs$-bounded codimension in the subspace $L_{j}$.

\vskip1ex Now we  fix representatives of level $s$.  First, for
each nonzero
 $j$ we fix an arbitrary basis of the factor-space $L_j/L_j(s)$ and
for each element of the basis we choose arbitrarily  a
representative in $L_j$. These elements are denoted by
 $ b_j(s)\in L_j$ (under the Index Convention) and  are called
{\it $b$-representatives of level $s$}. The total number of
 $b$-representatives of level $s$  is
 $(m,n)\hs$-bounded, since the dimensions of $L_j/L_j(s)$  is
$(m,n)\hs$-bounded for all $j\neq 0$.

Second, for each pattern ${\bf P}=[\ast_i,\,\,\ast_{-i}]$ of
length $2$ with non-zero indices $ \pm i\ne 0$ among   all values
 of this pattern on  homogeneous elements of $L_{i}(s),\, i\ne 0$, we
choose products that form a basis of the subspace spanned by all
values of of this pattern on homogeneous elements of
$L_{i}(s),\,i\ne 0$. The elements involved in these products are
called {\it $x$-representatives of level $s$} and are denoted by $
x_{j}(s)$ (under the Index Condition). Since the number of
patterns under consideration is  $n$-bounded and  the dimension of
the subspace
 $L_0$ is at most $m$, the total number of $x$-representatives of level  $s$
is $(m,n)\hs$-bounded. Together elements of the form  $b_i(s)$ and
$x_j(s)$ are sometimes called simply   {\it representatives of
level $s$}. Note that  $x$-representatives of level $s$, elements
$x_j(s)$,  are also centralizers of level $s$, but
$b$-representatives, elements $b_i(s)$, are not.

\vskip1ex It is clear from the construction that
\begin{equation}\label{basic-inclusion}
L_j(k+1)\leq L_j(k) \end{equation} for all $j\neq 0$ and any $k$.

\vskip1ex

By definition a centralizer  $y_v(s)$ of any level
 $s$ has the following centralizer property with respect to representatives
 of lower levels:
\begin{equation}\label{basic-property}
[z_{i_1}(\varepsilon_1),\,\ldots,\,
z_{i_{t}}(\varepsilon_{t}),\,y_v(s),\,z_{i_{t+1}}(\varepsilon_{t+1}),\,\ldots,\,
 z_{i_k}(\varepsilon_k)]=0, \end{equation}
whenever  $ v+ i_1+\cdots +i_k\equiv 0\, (\mbox{mod}\, n)$, \
$k\leq W_s$,  $t\in \{0,\ldots, k\}$ and the elements
$z_{i_j}(\varepsilon_j)$ are representatives (i.~e. either
$b_{i_j}(\varepsilon_j)$
 or $x_{i_j}(\varepsilon_j)$, in any combination) of any  (possible different) levels $\varepsilon_j<s$.

\vskip1ex The next lemma permits to represent products from $L_0$
as linear combinations of products in representatives; we shall
refer to this lemma as the ``freezing'' procedure.

\begin{lemma} [Freezing procedure]\label{l6}  Each  product of the form
$[a_{-j},\,b_j]\in L_0$, where $j\ne 0$, and each simple product
of length $n$ in homogeneous elements  with one and the same index
 $i\ne 0$, repeated  $n$ times  can be represented  $($frozen\/$)$ as a linear
combination of products of the same pattern in representatives of
level  $0$.

Each product $[y_{-j}(k),\,y_j(l)]\in L_0$
 in centralizers of levels $k,l$ can be represented $($frozen\/$)$ as a linear combination of products
 $[x_{-j}(s),\,x_j(s)]$ of the same pattern in $x$-representatives of any level $s$ satisfying $ 0\leq s\leq \min \{ k,l\}$.
 \end{lemma}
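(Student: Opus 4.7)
The plan is to observe that both assertions of Lemma \ref{l6} are essentially direct unpackings of the construction of representatives carried out in \S\,\ref{s-rep-cen}, combined with the monotonicity $L_j(k+1)\leq L_j(k)$ recorded in~(\ref{basic-inclusion}).

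For the first statement, recall that at level $0$ we fixed, for each pattern $\mathbf{P}=[\ast_{-j},\ast_j]$ with $j\ne 0$, a basis of the subspace
$$ \mathrm{Span}\bigl\{[a_{-j},b_j]\,:\,a_{-j}\in L_{-j},\,b_j\in L_j\bigr\}\subseteq L_0,$$
in which every basis element is by construction of the form $[x_{-j}(0),x_j(0)]$ for some representatives of level~$0$. Any given product $[a_{-j},b_j]\in L_0$ lies in this subspace, hence is a linear combination of products of the required pattern in representatives of level~$0$. The same verbatim argument, applied to the pattern $[\underbrace{\ast_i,\ldots,\ast_i}_n]$ for which a basis was fixed at level $0$ in exactly the same manner, proves the corresponding freezing for simple products of length~$n$.

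For the second statement, I would first reduce both arguments of the bracket to the same level. By~(\ref{basic-inclusion}) applied iteratively, $L_j(k)\leq L_j(s)$ whenever $s\leq k$; so for $0\leq s\leq\min\{k,l\}$ we have $y_{-j}(k)\in L_{-j}(s)$ and $y_j(l)\in L_j(s)$. Hence $[y_{-j}(k),y_j(l)]$ is a value of the pattern $[\ast_{-j},\ast_j]$ on homogeneous elements of $L_{\pm j}(s)$. By the very definition of the $x$-representatives of level~$s$, the fixed products $[x_{-j}(s),x_j(s)]$ span the subspace of $L_0$ generated by all such values, so $[y_{-j}(k),y_j(l)]$ is a linear combination of products $[x_{-j}(s),x_j(s)]$ with $x$-representatives of level~$s$.

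There is no genuine obstacle in this proof: the lemma is a bookkeeping statement which packages the defining property of the chosen bases into a handy ``freezing'' form. The only points requiring care are to keep the pattern fixed (so that indices and bracket structure match when one substitutes the basis expansion), and to observe the monotonicity of the centralizer chain in order to pass elements of higher level into a subspace where the lower-level basis of $x$-representatives is applicable.
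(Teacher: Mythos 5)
Your proposal is correct and is essentially the paper's own argument: the paper proves Lemma~\ref{l6} in one line by appealing to the definitions of level $0$ and levels $s>0$ together with the inclusions~(\ref{basic-inclusion}), which is exactly the bookkeeping you spell out (basis expansion of the spanned pattern values, plus monotonicity of the centralizer chain to push both factors down to a common level~$s$).
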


\begin{proof}
The lemma follows directly from the definitions of level~0 and
levels~$s>0$ and from the inclusions (\ref{basic-inclusion}).
\end{proof}

\vskip1ex {\it An $x$-quasirepresentative of length  $w$ and
level $k$\/} is any product of length  $w\geq 1$ involving exactly
one $x$-representative  $x_i(k)$ of level $k$ and $w-1$
representatives of lower levels, elements of the form
$b_{i_k}(\varepsilon_k)$ or
 $x_{i_j}(\varepsilon_j)$, in any combination and of any levels $\varepsilon_s<k$.
 $x$-Quasirepresentatives of level $k$ (and only they) are    denoted by
$\hat{x}_{j}(k)\in L_j$ under the Index Convention, where,
clearly, $j$ is equal  modulo $n$ to the sum of the indices of all
the elements involved in the $x$-quasirepresentative.
$x$-Quasirepresentatives of length $1$ are precisely
$x$-representatives.

\vskip1ex A {\it quasirepresentative of length $w$ of level  $\leq
k$\/} is any product of length $w$ in representatives of level
$\leq k$, elements of the form either  $b_{i_k}(\varepsilon_k)$ or
$x_{i_j}(\varepsilon_j)$, in any combination and of any levels
$\varepsilon_s\leq k$.
 Quasirepresentatives of level $k$ are exclusively denoted by
$\hat{b}_{j}(k)\in L_j$ under the Index Convention, where  $j$ is
equal modulo  $n$ to the sum of the indices of all elements
involved in the quasirepresentative. It is clear that a product in
quasirepresentatives is also a quasirepresentative of length equal
to the sum of the lengths of the quasirepresentatives involved and
of level equal to the maximum of their levels.

\vskip1ex A {\it quasicentralizer of length $w$ of level
 $k$\/} is any product involving exactly one centralizer $y_i(k)\in L_i(k)$ of
 level $k$ and  $w-1$ representatives of lower levels, elements of the
 form
$b_{i_k}(\varepsilon_k)$ or $x_{i_j}(\varepsilon_j)$, in any
combination and of any levels  $\varepsilon_s<k$.
 Quasicentralizers of level $k$ are exclusively denoted by
$\hat{y}_{j}(k)\in L_j$ under the Index Convention; the index $j$
is equal modulo  $n$ to the sum of the indices of all the elements
involved.

It is clear that an $x$-quasirepresentative of level $k$ is also a
quasicentralizer of level $k$; this does not  apply to all
quasirepresentatives.

\begin{lemma}[{\cite[Lemma 2]{ma-khu}}]\label{lkvasic} Any product
involving exactly one quasicentralizer $\hat{y}_{i}(t)$ of level
$t$ and quasirepresentatives of levels $< t$ is equal to
 $0$ if the sum of the indices of all elements involved is equal to $0$
and the sum of their lengths  is at most
 $W_t+1$. \end{lemma}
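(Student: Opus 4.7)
The plan is to reduce the lemma to the basic centralizer property~(\ref{basic-property}) by peeling off every level of bracketing and re-expressing the entire product as a linear combination of simple (left-normalized) products in the \textit{atomic} ingredients, namely the single centralizer $y_v(t)\in L_v(t)$ hidden inside $\hat y_i(t)$ and the ordinary representatives (elements of the form $b_{*}(\varepsilon)$ or $x_{*}(\varepsilon)$ with $\varepsilon<t$) hidden inside both $\hat y_i(t)$ and the quasirepresentatives.

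First, I would unfold the definitions: by definition of a quasicentralizer of level $t$, the expression $\hat y_i(t)$ is itself a product involving exactly one centralizer $y_v(t)$ and some number of representatives of levels $<t$; similarly each quasirepresentative occurring in the ambient product is a product of representatives of levels $<t$. Substituting these expansions back, the given product becomes a single (complex) homogeneous product in the atoms $y_v(t)$ and various representatives $z_{i_j}(\varepsilon_j)$ with $\varepsilon_j<t$. Crucially, the total number of atoms equals the sum of the lengths of the original quasicentralizer and quasirepresentatives, hence is at most $W_t+1$, and the sum of the indices of the atoms equals the sum of the indices of the original entries, hence is $\equiv 0\pmod n$.

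Next, I would invoke the Lie type identity~(\ref{tozh}) in the form recorded in \S3: any complex product in homogeneous elements of $L$ can be rewritten as a linear combination of simple left-normalized products of the same length in the same elements. Applying this to the unfolded product, I get a linear combination of simple products of the form
\[
[a_1,a_2,\ldots,a_{s-1},a_s],
\]
where $s\le W_t+1$, one of the $a_j$ equals $y_v(t)$, all other $a_j$ are representatives of levels $<t$, and $a_1+\cdots+a_s$ (sum of indices) $\equiv 0\pmod n$.

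Each such simple product is of exactly the shape covered by the centralizer property~(\ref{basic-property}) with $s=k+1\le W_t+1$, i.e.\ with $k\le W_t$ representatives of levels $<t$ flanking the single centralizer $y_v(t)$ of level $t$, and the vanishing condition on the index sum is satisfied. Hence each such simple product equals $0$, and therefore the original product vanishes as well. The main thing to be careful about is the bookkeeping: one must verify that the length bound $W_t+1$ on the \emph{overall} product and the index-sum condition are preserved under the unfolding and under the passage to simple left-normalized form; both follow immediately because unfolding brackets and rewriting by~(\ref{tozh}) change neither the set (with multiplicities) of atoms nor the sum of their indices.
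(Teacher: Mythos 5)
Your proposal is correct and follows essentially the same route as the paper's proof: both unfold the quasi-objects into their constituent atoms, invoke the Lie-type identity~(\ref{tozh}) to pass to a linear combination of simple left-normalized products of the same length, and then kill each summand via the centralizer property~(\ref{basic-property}). Your write-up is merely a more explicit version of the paper's two-sentence argument, with the bookkeeping on lengths and index sums spelled out.
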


 \begin{proof} Applying  (\ref{tozh}), we represent the product
as a linear combination of simple products of length
 $\leq W_t+1$ involving only one  centralizer of level $t$ and some representatives of levels $<t$.
Since the sum of the indices of all these elements is also equal
to $0$, all these products are equal to $0$
by~(\ref{basic-property}).
 \end{proof}

 \begin{lemma}[{\cite[Lemma 5]{ma-khu}}]\label{l-kv-cen} Any quasicentralizer
$\hat{y}_{j}(l+1)$
 of level $l+1$ and of length at most $W_{l+1}-W_l+1$ is a
 centralizer of level
$l$, i.~e. $\hat{y}_{j}(l+1)\in L_j(l)$.

\end{lemma}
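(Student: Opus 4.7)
The plan is to unfold the definition of $L_j(l)$ and reduce the claim directly to Lemma~\ref{lkvasic}. By the construction in \S\ref{s-rep-cen}, in order to verify that a homogeneous element $u \in L_j$ lies in $L_j(l)$ it suffices to check that for every tuple $\vec z = (z_{i_1}(\varepsilon_1),\dots,z_{i_k}(\varepsilon_k))$ of length $k \le W_l$ consisting of representatives of levels $\varepsilon_s < l$, with $j + i_1 + \cdots + i_k \equiv 0 \,(\text{mod}\,n)$, and every $t \in \{0,\ldots,k\}$, one has $\vartheta_{t,\vec z}(u) = 0$, that is, the simple product $[z_{i_1}(\varepsilon_1),\ldots,z_{i_t}(\varepsilon_t),u,z_{i_{t+1}}(\varepsilon_{t+1}),\ldots,z_{i_k}(\varepsilon_k)]$ vanishes.

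First I would set $u = \hat y_j(l+1)$ and form an arbitrary such simple product. The result is a product in which the quasicentralizer $\hat y_j(l+1)$ of level $l+1$ appears exactly once, while all the other factors $z_{i_s}(\varepsilon_s)$ are representatives of levels $\varepsilon_s < l$. Since a representative is nothing but a quasirepresentative of length $1$, and every level $< l$ is in particular $< l+1$, these outer factors qualify as quasirepresentatives of level $< l+1$ in the sense required by Lemma~\ref{lkvasic}. The sum of all indices involved is $j + i_1 + \cdots + i_k \equiv 0 \,(\text{mod}\,n)$ by assumption.

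The main point (and the only place where arithmetic enters) is verifying the length hypothesis of Lemma~\ref{lkvasic} for $t = l+1$, which requires that the total length of the product be at most $W_{l+1} + 1$. Here the quasicentralizer contributes at most $W_{l+1} - W_l + 1$, each of the $k \le W_l$ outer representatives contributes $1$, and so the total length is bounded by
\[
(W_{l+1} - W_l + 1) + W_l \;=\; W_{l+1} + 1,
\]
exactly matching the hypothesis. Lemma~\ref{lkvasic} then yields that the product equals $0$.

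Since this holds for every admissible $\vec z$ and every insertion position $t$, every map $\vartheta_{t,\vec z}$ kills $\hat y_j(l+1)$; hence $\hat y_j(l+1) \in L_j(l)$, as required. I do not anticipate any real obstacle beyond a careful bookkeeping of what \emph{level} and \emph{length} mean in each of the two definitions involved — the proof is essentially a one-line reduction to Lemma~\ref{lkvasic} once the length budget $W_{l+1} - W_l$ for the quasicentralizer is allocated against the length budget $W_l$ available for the outer tuples in the definition of $L_j(l)$.
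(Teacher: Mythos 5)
Your proof is correct and takes essentially the same approach as the paper: unfold the definition of $L_j(l)$, check the length budget $(W_{l+1}-W_l+1)+W_l = W_{l+1}+1$, and conclude the product vanishes. The only cosmetic difference is that you appeal to Lemma~\ref{lkvasic} as a black box, while the paper re-expands $\hat y_j(l+1)$ into simple products and cites~(\ref{basic-property}) directly — which is exactly how Lemma~\ref{lkvasic} is proved, so the arguments coincide.
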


\begin{proof} The element $\hat{y}_{j}(l+1)$ is a linear combination of simple
products involving only one centralizer of level $l+1$, the
element  $y_t(l+1)\in L_t(l+1)$ for some  $l$, and at most
$W_{l+1}-W_l$ representatives of lower levels $ \leq l$.
Substituting this expression into
$$[z_{i_1}(\e _1),\, \ldots, \,\hat{y}_{j}(l+1),\,\ldots,
 z_{i_k}(\e _k)] \eqno{(13)} $$
where  $k\leq W_{l}$, \
 $z_{i_k}(\e _k)$ are representatives of levels $\e _s<l$, and $ j+ i_1+\cdots +i_k\equiv 0\, (\mbox{mod}\,
n)$ we obtain a linear combination of simple products of length at
most $1\,+\,W_{l+1}-W_l+W_l=1\,+\,W_{l+1}$. The sum of the indices
remains equal $0$. Hence each summand is equal to $0$ by
(\ref{basic-property}).
\end{proof}

 \begin{lemma}[{\cite[Lemma 3]{ma-khu}}]\label{l-product} A product  of the form
$[a_{-i},\,y_i(k)]$ \/ $($or $[y_i(k),\,a_{-i}]$\/ $)$, where
$y_i(k)$ is a centralizer of level
 $k>1$, is equal to a  product of the form
 $[y_{-i}(k-1),\,y_i(k)]$ \/ $($or $[y_i(k),\,y_{-i}(k-1)]$ respectively\/ $)$,  where
$y_{-i}(k-1)$ is a centralizer of level $k-1$.
\end{lemma}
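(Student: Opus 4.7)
The plan is to decompose $a_{-i}$ using the construction of $b$-representatives at level $k-1$, and then to observe that all the ``bad'' terms in the resulting expansion vanish by the defining centralizer property~(\ref{basic-property}).

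More precisely, by construction of representatives of level $k-1$, the images of the $b$-representatives $b_{-i}(k-1) \in L_{-i}$ form a basis of the factor-space $L_{-i}/L_{-i}(k-1)$. Hence we can write
\[
a_{-i} \;=\; y_{-i}(k-1) \;+\; \sum_{s} \lambda_s\, b_{-i}^{(s)}(k-1),
\]
where $y_{-i}(k-1) \in L_{-i}(k-1)$ is a centralizer of level $k-1$, the $b_{-i}^{(s)}(k-1)$ are $b$-representatives of level $k-1$, and $\lambda_s \in \Bbb F$. Applying bilinearity,
\[
[a_{-i},\,y_i(k)] \;=\; [y_{-i}(k-1),\,y_i(k)] \;+\; \sum_{s} \lambda_s\, [b_{-i}^{(s)}(k-1),\,y_i(k)].
\]

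Next I would show that every term in the sum on the right is zero. Each such term is a product of length $2$ involving exactly one centralizer $y_i(k)$ of level $k$ and one representative $b_{-i}^{(s)}(k-1)$ of the lower level $k-1 < k$, and the sum of the indices is $-i + i \equiv 0 \,({\rm mod}\,n)$. Since the $n$-bounded parameters are chosen so that $W_k \geq 2$ for all $k \geq 1$ (this is part of the convention that the $W_k$ are sufficiently large, cf.~\S\,\ref{parametry}), the centralizer property~(\ref{basic-property}) applies and forces $[b_{-i}^{(s)}(k-1),\,y_i(k)] = 0$. Therefore
\[
[a_{-i},\,y_i(k)] \;=\; [y_{-i}(k-1),\,y_i(k)],
\]
as required. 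The case of $[y_i(k),\,a_{-i}]$ is handled by the very same argument, placing $y_i(k)$ on the left throughout; property~(\ref{basic-property}) holds for centralizers in any position $t$, so the same vanishing works.

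The argument is almost immediate once the construction from~\S\,\ref{s-rep-cen} is in hand, so I do not expect any serious obstacle. The only point that requires a little care is verifying that the length $2$ products indeed fall within the ``radius'' $W_k$ guaranteed by~(\ref{basic-property}); this is automatic from the choice of the $W_k$ but should be mentioned explicitly.
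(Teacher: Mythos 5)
Your proposal is correct and coincides with the paper's own proof: decompose $a_{-i}$ modulo $L_{-i}(k-1)$ via the $b$-representatives of level $k-1$, and kill the terms $[b_{-i}^{(s)}(k-1),\,y_i(k)]$ by the centralizer property~(\ref{basic-property}). The only (harmless) quibble is that the relevant bound in~(\ref{basic-property}) counts representatives, so one only needs $1\leq W_k$ rather than $W_k\geq 2$, which is automatic.
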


\begin{proof} We represent $a_{-i}$ as a sum of a linear combination of
elements of the form  $b_{-i}(k-1)$ for some $ b$-representatives
and a centralizer $y_{-i}(k-1)$ of level $k-1$.
 Then the product
$[a_{-i},\,y_i(k)]$ can be represented as a sum of a linear
combination of elements  of the form $[b_{-i}(k-1),\,y_i(k)]$ and
the product  $[y_{-i}(k-1),\,y_i(k)]$.  Since
$[b_{-i}(k-1),\,y_i(k)]=0$ by~(\ref{basic-property}) we get
$[a_{-i},\,y_i(k)]=[y_{-i}(k-1),\,y_i(k)]$.
 Similarly, $[y_i(k),\,a_{-i}]=[y_{i}(k),\,y_{-i}(k-1)],$
 where $y_{-i}(k-1)$ is a centralizer of level $k-1$.
\end{proof}

\vskip1ex {\bf Notation.} Because of the special role of the
number $n=|\varphi |$, the greatest common divisor  $(n,k)$ of
integers $n$ and $k$ will be denoted by $\overline{ k}$ for short.
Clearly, $\overline{n+k}=\overline{k}$ and $\overline{
(k,l)}=(\overline{k},\overline{l})$ is the greatest common divisor
of three integers $n$, $k$ and $l$.

\begin{lemma}[{see  \cite[Lemma 4]{ma-khu}}]\label{l2n} Any simple product of length $2n$ of
the form
\begin{equation}\label{2n} [a_s,
\,\hat{y}_{j}(n_1),\,\hat{y}_{j}(n_2),\,\ldots,
\,\hat{y}_{j}(n_{2n-1})]\end{equation}
 is equal to
 $0$ if  $\overline{ j}$ divides
$s$ and the length of each of the quasicentralizers
 $\hat{y}_{j}(n_i)$ is at most $W_{n_i}-n+2$.
\end{lemma}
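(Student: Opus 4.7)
The plan is to adapt the Lie-algebra argument of~\cite[Lemma~4]{ma-khu} to the Lie-type setting. Since anticommutativity is not available, the role played there by $[y_j,y_j]=0$ must be replaced. The substitute here is that any simple product of $n$ homogeneous elements all of the same nonzero index~$j$ lies in $L_{nj}=L_0$ and, by case~(b) of the freezing procedure (Lemma~\ref{l6}), is a linear combination of products of length~$n$ in $x$-representatives of level~$0$. This is what dictates both the threshold $2n$ on the total length of $P=[a_s,\hat y_j(n_1),\ldots,\hat y_j(n_{2n-1})]$ and the particular shape of the length bound $W_{n_i}-n+2$.

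First I would use the hypothesis $\overline{j}\mid s$ to track the indices of partial products of~$P$: they run through multiples of $\overline{j}$ modulo~$n$, cycling with period $p=n/\overline{j}$, so that some consecutive block of $n$ quasicentralizers of index~$j$ has its simple product in~$L_0$. Applying identity~(\ref{tozh}) to trade between simple and non-simple groupings, $P$ can be rewritten as a linear combination of products in which this block is grouped into a single element $Q\in L_0$. Freezing $Q$ by Lemma~\ref{l6}(b) replaces those $n$ quasicentralizers with level-$0$ $x$-representatives, thereby reducing the "effective complexity" of $P$ (the number of quasicentralizers, or the maximum level among them) while preserving the total length~$2n$ and the length bound on the remaining quasicentralizers.

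I would iterate this reduction, combined with Lemma~\ref{l-product} to rewrite elements adjacent to a centralizer as centralizers of lower level, until $P$ is expressed as a linear combination of simple products each containing exactly one centralizer $y_j(t)$ of the current maximum level~$t$, surrounded by quasirepresentatives of levels $<t$. At that point, the hypothesis $\overline{j}\mid s$, together with the fact that every quasicentralizer and every frozen block has index divisible by~$\overline{j}$, forces the relevant index sum to be $\equiv 0\pmod n$; and the bound $W_{n_i}-n+2$ on each original quasicentralizer (together with the total length being $2n$) keeps the sum of lengths below $W_t+1$. Lemma~\ref{lkvasic} (equivalently~(\ref{basic-property})) then kills each summand, giving $P=0$.

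The main obstacle is the combinatorial bookkeeping that makes this induction rigorous. After every application of~(\ref{tozh}) and every use of Lemma~\ref{l6}(b), both the length budget (to stay within the $W_t+1$ bound needed for Lemma~\ref{lkvasic}) and the index arithmetic (to ensure the residual configuration is of "centralizer-annihilation" type) must be re-verified, and one must exhibit a strictly decreasing parameter (for instance the maximum level $M$, or the number of quasicentralizers of level~$M$, or $\sum_i n_i$) to close the induction. The deficit $n-2$ in the length bound $W_{n_i}-n+2$ is precisely what absorbs the extra elements introduced around a max-level centralizer over one round of regrouping-plus-freezing, which is the key accounting point where anticommutativity in the Lie case is compensated for.
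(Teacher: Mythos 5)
There is a genuine gap, and it is structural. The hypothesis $\overline{j}\mid s$ is not there to make a block of $n$ consecutive quasicentralizers land in $L_0$ — that is automatic, since $nj\equiv 0\pmod n$, and needs no hypothesis. Its actual role in the paper's proof is to choose $q$ with $0\leq q\leq n-1$ and $s+qj\equiv j\pmod n$, so that the \emph{initial segment} $[a_s,\hat{y}_{j}(n_1),\ldots,\hat{y}_{j}(n_q)]$ lies in $L_j$; together with the next $n-1$ quasicentralizers it is then a simple product of $n$ homogeneous elements of $L_j$, which Lemma~\ref{l6} freezes into linear combinations of $[x_j(0),\ldots,x_j(0)]$. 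This is precisely the step that disposes of the arbitrary element $a_s$, converting the whole prefix into level-$0$ representatives. In your scheme the frozen block consists of quasicentralizers only and never contains $a_s$, so $a_s$ survives to the end; but Lemma~\ref{lkvasic} and property~(\ref{basic-property}) require every entry other than the single centralizer to be a representative or quasirepresentative of lower level, which $a_s$ is not. Your final annihilation step therefore cannot be applied. (Your index argument also fails as stated: divisibility of all indices by $\overline{j}$ does not force a sum $\equiv 0\pmod n$; in the paper the zero sum appears only for specific length-$n$ subproducts consisting of one $\hat{y}_j$ and exactly $n-1$ elements $x_j(0)$, whose indices add up to $nj$.)

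Two further steps would also fail. First, isolating an \emph{interior} consecutive block as a single factor by ``trading groupings'' via (\ref{tozh}) is not available: the identity rewrites $[[a,b],c]$ as $\alpha[a,[b,c]]+\beta[[a,c],b]$, and the correction term permutes the entries, destroying the block; without anticommutativity there is no way to reassemble it. The paper only ever groups initial segments (which are automatically factors of a left-normalized product) and only ever expands complex products into simple ones. Second, your iteration cannot reach the advertised configuration: of the $2n-1$ quasicentralizers at most one block of $n$ can be frozen, leaving $n-1$ of them, and no mechanism is offered to remove the remaining $n-2$, so ``exactly one centralizer of maximal level surrounded by quasirepresentatives'' is unreachable for $n\geq 3$. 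The paper needs no such reduction: after the single freezing it shows that the initial segment formed by the frozen block and the very next quasicentralizer $\hat{y}_j(n_{k+1})$ is already zero — every simple length-$n$ subproduct produced by (\ref{tozh}) contains that one quasicentralizer and $n-1$ representatives $x_j(0)$, has index sum $nj\equiv 0$ and total length at most $(W_{n_{k+1}}-n+2)+(n-1)=W_{n_{k+1}}+1$, so Lemma~\ref{lkvasic} kills it — and a vanishing initial segment annihilates the whole product (\ref{2n}), the trailing $n-2$ quasicentralizers never being touched. Your reading of the deficit $n-2$ in the length bound (it absorbs the $n-1$ level-$0$ representatives sitting next to the one quasicentralizer) is essentially right, but the surrounding argument needs to be rebuilt along the lines above.
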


\begin{proof} We distinguish in the product
 (\ref{2n}) an initial segment of the form
$$[a_s, \,\hat{y}_{j}(n_1),\,\ldots, \,\hat{y}_{j}(n_k)]$$
 with zero sum of indices that has an initial subsegment in $L_j$.
For that we first find an integer $q$ such that $0\leq q\leq n-1$
and $ s+qj\equiv j \,({\rm mod\,} n)$; this is possible because
 $ \overline{ j}$ divides~$s$.
 Then
$$[a_s, \,\hat{y}_{j}(n_1),\,\ldots, \,\hat{y}_{j}(n_q)]\,\in\,L_j$$
 and the next  $n-1$ quasicentralizers
$\hat{y}_{j}(n_t)$ complement this initial segment to a product
with zero sum of indices. This product has the form
 $[ \underbrace{a_j,\,\ldots, \,a_j}_{n}]$ (under the Index Convention),
where the first of the  $a_j$ denotes the aforementioned product
in  $L_j$, while the other  $a_j$ are elements $\hat{y}_{j}(n_i)$.
By Lemma~\ref{l6} we freeze this product in level~$0$, that is, we
represent it as a linear combination of products in
representatives  of level~$0$ of the form $
[\underbrace{x_j(0),\,\ldots, \,x_j(0)}_{n}]$. Substituting this
expression into the product
 (\ref{2n}) we consider the  initial segment of the form
\begin{equation}\label{2n2} \big[[\underbrace{x_j(0),\,\ldots,
\,x_j(0)}_{n}],\,\hat{y}_{j}(n_{k+1})\big].
\end{equation}
By (\ref{tozh}) we move the element $\hat{y}_{j}(n_{k+1})$ to the
left in (\ref{2n2})  in view  to obtain a product with the
rightmost element $x_j(0)$.  At the first step, we get the sum
$$
\beta\big[\underbrace{x_j(0),\,\ldots,
\,x_j(0)}_{n-1},\,\hat{y}_{j}(n_{k+1}),\, x_j(0)\big]\,\,+
\alpha\big[\underbrace{x_j(0),\,\ldots\,x_j(0)}_{n-1},\,
[x_j(0),\,\hat{y}_{j}(n_{k+1} ]\big].
$$
In the second summand we move the element $
[x_j(0),\,\hat{y}_{j}(n_{k+1})]$ to the left by (\ref{tozh}) and
so on.
 As a result  we obtain a linear combination of products
of length  $n+1$ in elements  $x_j(0)$ and $\hat{y}_{j}(n_{k+1})$
with the right-most element $x_j(0)$ and the product  of the form
$$\Big[x_j(0),\big[\underbrace{x_j(0),\,[x_j(0),\ldots,\,[x_j(0)}_{n-1}\,
\hat{y}_{j}(n_{k+1})]...]\big]\Big].
$$
We represent the subproduct
$$\big[\underbrace{x_j(0),\,[x_j(0),\ldots,\,[x_j(0)}_{n-1},\,
\hat{y}_{j}(m_{k+1})]...]\big]$$  as a linear combination of
simple products of length  $n$ of the form
\begin{equation}\label{2n3}
[x_j(0),\,\ldots,\,\hat{y}_{j}(n_{k+1}),\,\ldots,\,x_j(0)].
\end{equation}
Each of them has  zero sum of indices. The sum of the lengths of
the elements involved is at most
 $(W_{n_{k+1}}-n+2)+(n-1)=W_{n_{k+1}}+1$ (representatives $x_j(0)$
are
 quasirepresentatives of length~$1$). Hence this
product is equal to $0$ by Lemma~\ref{lkvasic}.

Expanding the initial segment of length $n$ in the products with
the most-right element $x_j(0)$ by (\ref{tozh}) we get again a
linear combination of products of the form  (\ref{2n3}), which are
all trivial.
\end{proof}

\begin{lemma} [{see  \cite[Lemma 6]{ma-khu}}]\label{lbasic} Suppose that $l$ is a positive
integer~$\geq 4n-3$ and  in the product
\begin{equation}\label{lbasic1}
\big[a_{s},\,\,c_0,\,\ldots,
c_0,\,[x_{-k}(l),\,x_k(l)],\,\,c_0,\,\ldots,
c_0,\,[x_{-k}(l),\,x_k(l)]\,\,c_0,\,\ldots, c_0\big]
\end{equation}
there are at least $4n-3$
 products $[x_{-k}(l),\,x_k(l)]$ in $x$-representatives with the same pair of indices
$\pm k$, the $c_0$ are\/ $($possibly different\/$)$ products of
the form
 $[x_{-i}(0),\,x_i(0)]$
in representatives of level\/ $0$
 for\/ $($possibly different\/$)$
$i\ne 0$,  and the total number $C$ of the $c_0$-occur\-rences is
at most $(W_1-4n+3)/2$ $($on each interval between
 $a_{s}$
and  the products  $[x_{-k}(l),\,x_k(l)]$  the $c_0$ can also be
absent\/$)$. If $n_1, n_2, \ldots ,n_{4n-3}$ are arbitrary
pairwise different positive integers, all~$\leq l$, then the
product $($\ref{lbasic1}$)$
 can be represented as a linear combination of products  of the form
$$\big[v_{t},\,\hat{x}_{k}(n_{i_1}),\,\hat{x}_{k}(n_{i_2}),\,\ldots,
\,\hat{x}_{k}(n_{i_{2n-1}})\big]
$$
or
$$\big[v_{t},\,\hat{x}_{-k}(n_{i_1}),\,\hat{x}_{-k}(n_{i_2}),\,\ldots,
\,\hat{x}_{-k}(n_{i_{2n-1}})\big],
$$
where in each case there are
$2n-1$ in succession $x$-quasi\-repre\-sen\-ta\-tives with one and
the same
 index $k$ or $-k$, the levels $n_{i_1}, \ldots , n_{i_{2n-1}}$ are
pairwise distinct numbers in the set $\{ n_1, \ldots ,n_{4n-3}\},$
and the length of each of the $x$-quasirepresentatives
$\hat{x}_{\pm k}(n_{i_j})$ is at most $2C+4n-3$.
\end{lemma}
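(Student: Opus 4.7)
The plan is to first apply the freezing procedure of Lemma~\ref{l6} independently to each of the $4n-3$ designated brackets $[x_{-k}(l),x_k(l)]$ occurring in~(\ref{lbasic1}). Since each of the given integers $n_i$ satisfies $n_i\le l$, the $i$th such bracket can be refrozen as a linear combination of products $[x_{-k}(n_i),x_k(n_i)]$ of the same two-bracket pattern but with $x$-representatives of level~$n_i$. After distributing these refreezings across the ambient product, (\ref{lbasic1}) is expressed as a linear combination of simple products of the same overall shape in which the $i$th bracket now carries its own individual level~$n_i$ drawn from the prescribed set $\{n_1,\dots,n_{4n-3}\}$.

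Next, I would fix any $2n-1$ of the assigned levels, say $n_{i_1},\dots,n_{i_{2n-1}}$, whose brackets are earmarked to supply the required trailing $x$-quasirepresentatives. Applying the defining identity~(\ref{tozh}) iteratively---so that every $[[a,b],c]$ on a homogeneous triple is rewritten as a linear combination of $[a,[b,c]]$ and $[[a,c],b]$---any nested product can be re-expressed as a linear combination of simple products of the same length in the same homogeneous elements. I would use these rewriting moves to push the $C$ blocks $c_0$ together with the $2n-2$ unchosen brackets either leftward, where they are incorporated into a leading segment of the form $v_t$, or into the interior of one of the chosen brackets, where they are absorbed as additional factors of a single $x$-quasirepresentative of the corresponding level~$n_{i_j}$. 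The output is then exactly a linear combination of products of the required form $[v_t,\hat{x}_{\pm k}(n_{i_1}),\dots,\hat{x}_{\pm k}(n_{i_{2n-1}})]$, with all trailing quasirepresentatives in each summand sharing one common index, either $k$ or $-k$.

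The length bookkeeping is straightforward: a chosen $\hat{x}_{\pm k}(n_{i_j})$ contributes $1$ for its central $x$-representative and, in the worst case, absorbs all $C$ blocks $c_0$ (each of length~$2$) together with all $2n-2$ unchosen brackets (each of length~$2$), giving a total length of at most $1+2C+2(2n-2)=2C+4n-3$, as claimed.

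The main obstacle is that an $x$-quasirepresentative of level~$n_{i_j}$ only admits representatives of level \emph{strictly less} than $n_{i_j}$, so the absorbing moves must respect the relative order of the chosen levels. I would arrange this by taking as the chosen levels the $2n-1$ \emph{largest} among the $4n-3$ assigned ones: then every unchosen bracket automatically has level smaller than every chosen bracket and may be absorbed into any chosen quasirepresentative, while the $c_0$-blocks, built from level-$0$ representatives, may be placed into any chosen quasirepresentative or pushed leftward into~$v_t$. Carrying out this scheduling while carefully tracking indices under the Index Convention is the only intricate ingredient of the argument; once it is executed, the form of the conclusion and the length bound follow immediately.
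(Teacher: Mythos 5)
Your proposal misses the central combinatorial device of the paper's argument, and as stated the plan does not produce the claimed output.

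The crux is what happens when the nested brackets $[x_{-k}(n_i),\,x_k(n_i)]$ are expanded via~(\ref{tozh}) into simple products. A bracket $[x_{-k}(n_i),\,x_k(n_i)]$ has index $0\ (\mathrm{mod}\ n)$, so it cannot itself serve as a single $x$-quasirepresentative of index $\pm k$; it must be split. After expanding, each of the $4n-3$ brackets yields a pair of adjacent entries in either the ``good'' order $x_{-k}(n_i),\,x_k(n_i)$ or the ``bad'' order $x_k(n_i),\,x_{-k}(n_i)$, depending on which summand of the expansion you are in, and these orders vary independently across brackets. The conclusion requires $2n-1$ trailing quasirepresentatives all carrying \emph{one and the same} index $k$ or $-k$; this forces the $2n-1$ pairs chosen to supply them to all be in the same order. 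You fix in advance the $2n-1$ brackets with the largest levels, but after expansion there is no reason those particular pairs share an order: in a given summand some may be good and some bad, which would make your trailing factors a mixture of $\hat{x}_k$'s and $\hat{x}_{-k}$'s. The paper's proof instead chooses the surviving $2n-1$ pairs \emph{after} expansion: since $4n-3=(2n-1)+(2n-2)$, by pigeonhole each summand has at least $2n-1$ pairs in a common order; the at most $2n-2$ pairs in the opposite order are successively removed by~(\ref{tozh}) --- each step either restores the common order or produces a subproduct $[x_k(n_i),\,x_{-k}(n_i)]$ that is frozen at level~$0$ and added to the $c_0$-stock --- and only then are the right-hand halves $x_k(n_i)$ of the remaining uniform pairs transferred rightward, absorbing $c_0$'s into quasirepresentatives as they go.

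Two further points. First, your length bound $2C+4n-3$ comes out to the right number, but for the wrong reason: in the paper's proof, a quasirepresentative absorbs at most $(2n-2)+C$ level-$0$ blocks $c_0$ (the original $C$ plus up to $2n-2$ frozen bad pairs), giving $1+2\bigl((2n-2)+C\bigr)=2C+4n-3$; in your plan it absorbs $C$ $c_0$'s and $2n-2$ un-frozen, unchosen brackets, which happens to give the same count, but you have not justified why unabsorbed halves, cross terms, and the rearrangement itself do not inflate it. Second, and relatedly, your ``largest levels'' scheduling is designed to make the absorbing moves respect level order, but the paper does not need any such scheduling: the only dangerous cross terms arising in the collection process are of the form $[\hat{x}_k(n_i),\,x_{-k}(n_j)]$ with $n_i\ne n_j$, and these have zero index sum and controlled length, so they vanish by Lemma~\ref{lkvasic} regardless of which level is larger. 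Thus the obstacle you flagged as ``the only intricate ingredient'' is in fact handled automatically, while the obstacle you did not address --- ensuring a uniform sign on the $2n-1$ trailing factors --- is the genuine content of the lemma.
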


Here, as always under the Index Convention,
 the products $[x_{-k}(l),\,x_k(l)]$ can be different; the only
things that matter are the levels and the indices indicating
belonging to
 the homogeneous compo\-nents.

\begin{proof}
By Lemma~\ref{l6} we freeze the last $4n-3$ products
$[x_{-k}(l),\,x_k(l)]$ in the levels $n_1, n_2, \ldots ,n_{4n-3}$,
rename again by $a_s$ the corresponding initial segment of the
product (\ref{lbasic1}), and rewrite (\ref{lbasic1}) as a linear
combination of products of  the form
$$
\big[a_{s},c_0,\ldots, c_0,\,[x_{-k}(n_1),\,x_{k}(n_1)],\,
c_0,\ldots, c_0, \,[x_{-k }(n_{4n-3}),\,x_{k}(n_{4n-3})],\,
c_0,\ldots, c_0\big].$$ By (\ref{tozh}) we expand all the inner
brackets. In each product of the obtained linear combination there
are at least $2n-1$ pairs of consecutive elements $x_{-k}(n_i),\,
x_{k}(n_i) $ or $x_{k}(n_i),\, x_{-k}(n_i)$ with the same order of
indices~$\pm k$. We consider the case where there are at least
$2n-1$ pairs $x_{-k}(n_i),\,x_{k}(n_i)$ and hence at most $2n-2$
other ``bad" pairs $x_{k}(n_i),\,x_{-k}(n_i)$. In such a product
we successively get rid of the ``bad''
 pairs applying (\ref{tozh}) again:
$$[\ldots,\,  x_{k}(n_i),\,x_{-k}(n_i),\ldots] =\beta\,
[\ldots, x_{-k}(n_i),\,x_{k}(n_i),\ldots]\,+\, \alpha\,[\ldots,
[x_{k}(n_i),\,x_{-k}(n_i)],\ldots ].
$$
At each step the result is the sum of a product with a good pair
 replacing the bad one and a summand with the
subproduct $ [x_{k}(n_i),\,x_{-k}(n_i)]$, which we freeze in level
$0$ and thus add to the $c_0$-occur\-rences.

In the end we obtain a linear combination of products each
containing at least $2n-1$ good pairs $[x_{-k}(n_i),\,
x_{k}(n_i)]$, not containing bad pairs, and containing at most
$$(2n-2)+C \leq
(2n-2)+(W_1-4n+3)/2=(W_1-1)/2$$ elements of the form
$c_0=[x_{-i}(0),\,x_i(0)]$. In each of these products
 we transfer successively all the right
elements $x_{k}(n_i)$ of good pairs to the right aiming to collect
them at the right end of the product in the same order as they
occur in the product. The first to be transferred to the right
over some of the products $c_0=[x_{-s}(0),\,x_s(0)]$ is the
right-most of the $x_{k}(n_i)$, then the next, and so on.
Transferring $x_{k}(n_i)$ over a product $[x_{-s}(0),\,x_s(0)]$
yields an additional summand, where $x_{k}(n_i)$ is replaced by
the product $\big[x_{k}(n_i),\, \,[x_{-s}(0)\,x_s(0)]\big]$, which
is a $x$-quasirepresentative of level $n_i$ and is denoted
by~$\hat{x}_{k}(n_i)$.  In this summand this
$x$-quasirepresentative~$\hat{x}_{k}(n_i)$ takes over the role of
$x_{k}(n_i)$ and is also transferred to the right.

No other additional summands arise in this process. Indeed, the
elements $x_{k}(n_i)$ or, more generally, $\hat{x}_{k}(n_i)$ are
never transferred over one another.
 When an element $\hat{x}_{k}(n_i)$ is transferred over the
left part ${x}_{-k}(n_j)$ of another pair, the levels $n_i$ and
$n_j$ are always different. In the
 additional summand  the arising product
$[\hat{x}_{k}(n_i), \,{x}_{-k}(n_j)]$  has zero sum of indices and
the sum of the lengths of the $x$-quasirepresentatives involved is
at most $W_1+1$.  Indeed, the length of $\hat{x}_{k}(n_i)$ is at
most $2((2n-2)+C)+1=2C+4n-3\leq W_1$ (here  the elements $c_0$
contribute at most $ 2((2n-2)+C)\leq W_1-1$ to the length of
$\hat{x}_{k}(n_i)$ plus $1$ for the original element of the
transfer). Hence this subproduct is in fact equal to~$0$ by
Lemma~\ref{lkvasic} (bearing in mind that $W_1<W_i$ for all $i\geq
2$).

 The summands that had originally at least
$2n-1$ pairs of successive elements $[x_{k}(n_i),\, x_{-k}(n_i)]$
are subjected to similar transformations, with the roles of
 the
 $x_{k}(n_i)$, $\hat{x}_{k}(n_{i})$ taken over by
the $x_{-k}(n_i)$, $\hat{x}_{-k}(n_{i})$, respectively, and
``good'' and ``bad'' reversed.

The result of the collecting process described above is a linear
combination of products of the form
 \begin{equation}
 \label{14} [v_{t},\,\hat{x}_{
 k}(n_{i_1}),\,\ldots,
\,\hat{x}_{ k}(n_{i_{2n-1}})],
\end{equation}
 or
 \begin{equation}
 \label{15} [v_{t},\,\hat{x}_{-k}(n_{i_1}),\,\ldots, \hat{x}_{ -k}(n_{i_{2n-1}})],\end{equation} satisfying the
conclusion of the lemma; here $v_t$ simply denotes an initial
segment of the
 product.
\end{proof}

\begin{corollary}\label{lbasic-c} Suppose that $l$ is a positive
integer~$\geq 4n-3$ and  in the product
\begin{equation}\label{lbasic2} \big[c_0,\ldots,
c_0,\,[x_{-k}(l),\,x_k(l)],\,c_0,\ldots, c_0,\,\,
\boldsymbol{a}_{s},\,\,c_0,\ldots, c_0,\,[x_{-k}(l),\,x_k(l)],\,
c_0,\ldots, c_0\big]
\end{equation} there are at least $8n-7$
 products $[x_{-k}(l),\,x_k(l)]$ in $x$-representatives with the same pair of indices
$\pm k$, the $c_0$ are\/ $($possibly different\/$)$ products of
the form
 $[x_{-i}(0),\,x_i(0)]$
in representatives of level\/ $0$
 for\/ $($possibly different\/$)$
$i\ne 0$,  and the total number $C$ of the $c_0$-occur\-rences is
at most $(W_1-5n+5)/2$ $($on each interval between
 $a_{s}$
and  the products  $[x_{-k}(l),\,x_k(l)]$  the $c_0$ can also be
absent\/$)$. If $n_1, n_2, \ldots ,n_{4n-3}$ are arbitrary
pairwise different positive integers, all~$\leq l$, then the
product $(\ref{lbasic2})$
 can be represented as a linear combination of products  of the form
$$[v_{t},\,\hat{x}_{k}(n_{i_1}),\,\hat{x}_{k}(n_{i_2}),\,\ldots,
\,\hat{x}_{k}(n_{i_{2n-1}})]$$ or
$$[v_{t},\,\hat{x}_{-k}(n_{i_1}),\,\hat{x}_{-k}(n_{i_2}),\,\ldots,
\,\hat{x}_{-k}(n_{i_{2n-1}})],$$ where in each case there are
$2n-1$ in succession $x$-quasi\-repre\-sen\-ta\-tives with one and
the same
 index $k$ or $-k$, the levels $n_{i_1}, \ldots , n_{i_{2n-1}}$ are
pairwise distinct numbers in the set $\{ n_1, \ldots ,n_{4n-3}\},$
and the length of each of the $x$-quasirepresentatives
$\hat{x}_{\pm k}(n_{i_j})$ is at most $2C+4n-3$.
\end{corollary}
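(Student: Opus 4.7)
The plan is to reduce to Lemma~\ref{lbasic} by a case analysis on the location of $\boldsymbol{a}_s$ relative to the $8n-7$ pairs $[x_{-k}(l),x_k(l)]$ appearing in~(\ref{lbasic2}). Since $\boldsymbol{a}_s$ divides the sequence of pairs into two (possibly empty) blocks, at least $\lceil(8n-7)/2\rceil=4n-3$ pairs must lie entirely on one side of it. I split the argument accordingly: Case~A, in which at least $4n-3$ pairs lie to the right of $\boldsymbol{a}_s$, and Case~B, in which at least $4n-3$ pairs lie to the left.

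In Case~A I will denote by $v_0$ the initial segment of~(\ref{lbasic2}) from its leftmost element up through $\boldsymbol{a}_s$; this is itself a single homogeneous element of some degree $t$. Substituting $v_0$ for this initial segment rewrites~(\ref{lbasic2}) exactly in the form $[v_0,c_0,\ldots,c_0,[x_{-k}(l),x_k(l)],\ldots,c_0]$ appearing in Lemma~\ref{lbasic}, with $v_0$ taking the role of $a_s$, with at least $4n-3$ pairs to its right, and with at most $C\leq(W_1-5n+5)/2\leq(W_1-4n+3)/2$ $c_0$-occurrences (the second inequality is valid for $n\geq 2$). Lemma~\ref{lbasic} then produces exactly the desired decomposition.

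In Case~B the element $\boldsymbol{a}_s$ cannot simply be absorbed into a leftmost anchor, so I plan to adapt the proof strategy of Lemma~\ref{lbasic} directly to~(\ref{lbasic2}). Using Lemma~\ref{l6} I freeze the $4n-3$ rightmost pairs in the prescribed levels $n_1,\ldots,n_{4n-3}$ (some of these frozen pairs now lie before $\boldsymbol{a}_s$ and some after), rename the initial segment preceding the leftmost frozen pair as the new anchor, expand the inner brackets via~(\ref{tozh}), reduce bad consecutive pairs $x_k(n_i),x_{-k}(n_i)$ to good ones $x_{-k}(n_i),x_k(n_i)$ (each swap contributing an additional frozen $c_0$), and then transfer the right element $x_k(n_i)$ of each good pair rightward past the remaining $c_0$'s, past $\boldsymbol{a}_s$, and past any trailing elements, exactly as in Lemma~\ref{lbasic}.

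The hard part will be that transferring an $\hat{x}_k(n_i)$ past $\boldsymbol{a}_s$ via~(\ref{tozh}) produces an extra summand $[\ldots,[\hat{x}_k(n_i),\boldsymbol{a}_s],\ldots]$ in which the merged element is not an $x$-quasirepresentative, since $\boldsymbol{a}_s$ is not a representative; unlike the crossings past $c_0$'s or past the left half of another good pair handled in Lemma~\ref{lbasic}, this extra summand is neither automatically an $x$-quasirepresentative nor automatically zero by Lemma~\ref{lkvasic}. I intend to deal with it by expanding $[\hat{x}_k(n_i),\boldsymbol{a}_s]$ further through~(\ref{tozh}) and absorbing $\boldsymbol{a}_s$ together with its originally adjacent elements into an enlarged initial segment $v_t$ of the resulting product, thereby preserving the required rightmost block of $2n-1$ genuine $x$-quasirepresentatives. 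The strengthened hypothesis $C\leq(W_1-5n+5)/2$, which is stricter than Lemma~\ref{lbasic}'s bound by exactly $(n-2)/2$, supplies the slack needed to accommodate the additional $c_0$-contributions generated at these crossings while still keeping the length of each resulting $\hat{x}_{\pm k}(n_{i_j})$ within $2C+4n-3$.
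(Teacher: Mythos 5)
Your Case~A matches the paper exactly: re-denote the initial segment $\big[c_0,\ldots,c_0,[x_{-k}(l),x_k(l)],\ldots,c_0,\boldsymbol{a}_s\big]$ as a new $a_s$ and invoke Lemma~\ref{lbasic}, noting that $(W_1-5n+5)/2\leq(W_1-4n+3)/2$ for $n\geq 2$. That part is fine.

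Case~B has a genuine gap. You propose to re-run the proof of Lemma~\ref{lbasic} inside~(\ref{lbasic2}) and handle the crossings of $\hat{x}_k(n_i)$ past $\boldsymbol{a}_s$ by ``expanding $[\hat{x}_k(n_i),\boldsymbol{a}_s]$ through~(\ref{tozh}) and absorbing $\boldsymbol{a}_s$ together with its originally adjacent elements into an enlarged initial segment $v_t$.'' This does not work as stated. The merged element $[\hat{x}_k(n_i),\boldsymbol{a}_s]$ sits in the \emph{middle} of the product, with other frozen pairs and $c_0$'s both to its left and to its right; it cannot be folded into the initial segment $v_t$ without also absorbing the pairs lying to its left, and those pairs are precisely the material you need in order to assemble the $2n-1$ consecutive $x$-quasirepresentatives at the right end. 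Worse, every subsequent $\hat{x}_k(n_{i'})$ that tries to cross this merged element produces a further extra summand with an even less structured merged element, and this recursion does not visibly terminate; your appeal to the extra slack $(n-2)/2$ in $C$ is not an argument that it does. You would also have to re-verify the final length bound $2C+4n-3$ for these new quasirepresentatives, which your sketch does not do.

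The paper dispatches Case~B by an entirely different and much shorter route, which is worth contrasting with yours. Apply Lemma~\ref{lbasic} \emph{only to the initial segment of~(\ref{lbasic2}) preceding $\boldsymbol{a}_s$} (re-denoting its leading $c_0$ as the anchor). This rewrites that segment as a linear combination of products of the form~(\ref{14}) or~(\ref{15}), that is, $[v_t,\hat{x}_{\pm k}(n_{i_1}),\ldots,\hat{x}_{\pm k}(n_{i_{2n-1}})]$, each with quasirepresentative lengths at most $2C+4n-3\leq W_1-n+2\leq W_{n_{i_j}}-n+2$. But the original initial segment lies in $L_0$, so each of these rewritten segments has total index $0$; hence $t\pm(2n-1)k\equiv 0\ (\mathrm{mod}\ n)$, so $\overline{k}$ divides $t$, and Lemma~\ref{l2n} kills every one of them. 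The whole product~(\ref{lbasic2}) therefore vanishes, which is trivially of the required form. The crucial observation you missed is that the segment to the left of $\boldsymbol{a}_s$ already has zero index, so one should not push $\hat{x}_{\pm k}$'s across $\boldsymbol{a}_s$ at all but instead use Lemma~\ref{l2n} to annihilate the left segment outright.
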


\begin{proof}
 Since the number of the products
 $[x_{-k}(l),\,x_k(l)]$ in $x$-representatives with the same indices  $\pm k$
 is at least
 $8n-7$, there are at least $4n-3$ such products either to the  left of  $a_s$ or to the right of
  $a_s$. In the case where there are at least
  $4n-3$ products $[x_{-k}(l)\,x_k(l)]$ to the right of
  $a_s$, we re-denote the initial segment $\big[c_0,\,\ldots,
\,c_0,\,[x_{-k}(l),\,x_k(l)],\,c_0,\,\ldots, \,c_0,\,\,
a_{s}\big]$ again by $a_s$  and apply  Lemma~\ref{lbasic} (it is
possible because the number of $c_0$-occurrences is at most
$(W_1-5n+5)/2\leq (W_1-4n+3)/2$ if $n\geq 2$).

 In the case where there are at least $4n-3$ products
 $[x_{-k}(l),\,x_k(l)]$ to the left of  $a_s$ we apply Lemma~\ref{lbasic} to  the initial segment preceding
 $a_s$.
  We obtain a linear
 combination of products with initial segments of the form
 (\ref{14}) or (\ref{15}).
But unlike the previous case the sum of the indices is equal to
$0$, therefore all these summands are equal to $0$ by
Lemma~\ref{l2n} since the length of each of the quasicentralizer
$\hat{x}_{\pm k}(n_{i_j})$ involved in (\ref{14}) or (\ref{15}) is
at most $2C+4n-3\leq  W_1-5n+5+4n-3=W_1-n+2\leq W_{n_j}-n+2$.
\end{proof}

\begin{lemma}[{see  \cite[Lemma 7]{ma-khu}}] \label{l-7}
If $\overline{ k}$ divides $ s$, then any product of the form
\begin{equation}
 \big[a_s,\,c_0,\ldots,
c_0,\,[x_{-k}(l),\,x_k(l)],\, c_0,\ldots, c_0,\,
[x_{-k}(l),\,x_k(l)],\,c_0,\ldots, \,c_0\big],\end{equation}
 where
there are at least $4n-3$ subproducts $[x_{-k}(l),\,x_k(l)]$ with
the same pair of indices $\pm k$, the level $l$ is at least
$4n-3$, and the $c_0$ are\/
 $($possibly different\/$)$ products
of the form $[x_{-i}(0),\,x_i(0)]$ in representatives of level\/
$0$ for\/ $($possibly different\/$)$ $i\ne 0$ $($on each interval
between
 $a_{s}$
and the products $[x_{-k}(l),\,x_k(l)]$ the $c_0$ can also be
absent\/$)$, and the number of $c_0$-occur\-rences is at most
 $(W_1-5n+5)/2$,
 is equal to~$0$.
\end{lemma}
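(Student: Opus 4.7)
The plan is to derive the lemma as a clean combination of Lemma~\ref{lbasic} and Lemma~\ref{l2n}. Lemma~\ref{lbasic} will rewrite the given product as a linear combination of products whose rightmost $2n-1$ factors are $x$-quasirepresentatives of a common index $\varepsilon k$ ($\varepsilon\in\{+1,-1\}$), with pairwise distinct levels and with controlled lengths. Lemma~\ref{l2n} will then annihilate each such summand, provided the index of the leading element is divisible by $\overline{k}=\overline{\varepsilon k}$.

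First I would verify the hypotheses of Lemma~\ref{lbasic} and apply it. The leading element $a_s$ sits at the left, there are at least $4n-3$ subproducts $[x_{-k}(l),x_k(l)]$ with the common pair of indices $\pm k$, the bound $l\geq 4n-3$ allows us to pick any pairwise distinct positive integers $n_1,\ldots,n_{4n-3}\leq l$, and the present bound $C\leq (W_1-5n+5)/2$ is at least as strong as the bound $C\leq(W_1-4n+3)/2$ demanded there (for $n\geq 2$; the case $n=1$ is trivial since $\overline{k}=1$). Lemma~\ref{lbasic} then rewrites the product as a linear combination of expressions
$$[v_t,\,\hat{x}_{\varepsilon k}(n_{i_1}),\,\ldots,\,\hat{x}_{\varepsilon k}(n_{i_{2n-1}})],$$
with pairwise distinct levels $n_{i_j}\in\{n_1,\ldots,n_{4n-3}\}$ and each $\hat{x}_{\varepsilon k}(n_{i_j})$ of length at most $2C+4n-3$.

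Next I would apply Lemma~\ref{l2n} to each such summand, regarding every $x$-quasirepresentative of level $n_{i_j}$ as the quasicentralizer of level $n_{i_j}$ that it in particular is. Two hypotheses must be checked. For the divisibility condition, tracing the proof of Lemma~\ref{lbasic} shows that $v_t$ consists of $a_s$ together with the $c_0$-occurrences (each of index $0$) and the $2n-1$ ``left partners'' of index $-\varepsilon k$ whose right partners were moved out; hence $v_t\in L_{s-\varepsilon(2n-1)k}$. Since $\overline{k}$ divides both $s$ by hypothesis and $(2n-1)k$ trivially, it divides the index of $v_t$. For the length condition, $2C+4n-3\leq (W_1-5n+5)+4n-3=W_1-n+2\leq W_{n_{i_j}}-n+2$, using $C\leq (W_1-5n+5)/2$ and $W_1\leq W_{n_{i_j}}$. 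Lemma~\ref{l2n} therefore annihilates each summand, so the original product equals $0$.

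There is no real obstacle here: the lemma is essentially a splicing of the two previous results. The only point that deserves care is tracking that the divisibility $\overline{k}\mid s$ on the old leading element propagates to the analogous divisibility for the new leading element $v_t$ produced by Lemma~\ref{lbasic}'s collecting process; this works precisely because $\overline{k}\mid k$, so shifting the index by any integer multiple of $k$ preserves divisibility by $\overline{k}$.
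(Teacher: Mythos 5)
Your proposal is correct and follows essentially the same route as the paper: apply Lemma~\ref{lbasic} with pairwise distinct levels $\leq l$, then kill each resulting summand by Lemma~\ref{l2n}, checking the length bound $2C+4n-3\leq W_1-n+2\leq W_{n_{i_j}}-n+2$ and the divisibility of the index of the new leading segment by $\overline{k}$ (the paper phrases the latter via the invariance of the total index sum, equal to $s$, together with $\overline{k}\mid k,\,n-k$, which is the same observation as yours).
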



\begin{proof} We first apply Lemma~\ref{lbasic} to our product with $1,\, 2,
\ldots ,4n-3$ as the numbers $n_1, n_2, \ldots , n_{4n-3}$. We
obtain a linear combination of products of the form
\begin{equation}\label{fcor-basic1}[v_{t},\,\hat{x}_{k}(m_1),\,\hat{x}_{k}(m_2),\,\ldots,
\,\hat{x}_{k}(m_{2n-1})], \end{equation} or
\begin{equation}\label{fcor-basic2}[v_{t},\,\hat{x}_{-k}(m_1),\,\hat{x}_{-k}(m_2),\,\ldots,
\,\hat{x}_{-k}(m_{2n-1})]\end{equation} where in each case there
are $2n-1$ in succession $x$-quasi\-repre\-sen\-ta\-tives with one
and the same index $k$ or $-k$,  the levels $m_1, \ldots ,
m_{2n-1}$ are pairwise distinct, and the lengths of the
$x$-quasirepresentatives $\hat{x}_{k}(m_i)$ are at most
 $2C+4n-3 \leq W_1-5n+5+4n-3=W_1-n+2$.

Now by Lemma~\ref{l2n} each product  (\ref{fcor-basic1}) or
(\ref{fcor-basic2}) is equal to~$0$. Indeed, the condition of
Lemma~\ref{l2n} on the lengths is satisfied. It remains to check
the divisibility condition. For each
 product arising under the transformations described the sum of
 indices
remains
 the same, equal to the sum of indices of the original product, that is,
to~$s$, and therefore is divisible by $\overline{ k}$ by
hypothesis.
 Hence the index $t$ in every product (\ref{fcor-basic1}) or
(\ref{fcor-basic2}) is divisible by $\overline{
k}=\overline{-k}=\overline{ n-k}$, since the numbers $k$ and $n-k$
are, obviously, divisible by~$\overline{ k}$.
 \end{proof}

\begin{corollary}\label{c-l-7}
If $\overline{ k}$ divides $ s$, then any product of the form
\begin{equation} [c_0,\,\ldots,
\,c_0,\,[x_{-k}(l),\,x_k(l)],\ldots,
 \boldsymbol{a_s},\ldots,\,[x_{-k}(l),\,x_k(l)],\,c_0,\,\ldots, \,c_0]
\end{equation}
 where
there are at least $8n-7$ subproducts $[x_{-k}(l),\,x_k(l)]$ with
the same pair of indices $\pm k$, the level $l$ is at least
$4n-3$, and the $c_0$ are\/
 $($possibly different\/$)$ products
of the form $[x_{-i}(0),\,x_i(0)]$ in representatives of level\/
$0$ for\/ $($possibly different\/$)$ $i\ne 0$ $($on each interval
between
 $a_{s}$
and the products $[x_{-k}(l),\,x_k(l)]$ the $c_0$ can also be
absent\/$)$, and the number of $c_0$-occur\-rences is at most
 $(W_1-5n+5)/2$,
 is equal to~$0$.
\end{corollary}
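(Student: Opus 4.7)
The plan is to mirror the step from Lemma~\ref{lbasic} to Corollary~\ref{lbasic-c}, but now based on Lemma~\ref{l-7} together with Lemma~\ref{l2n}. Since the product contains at least $8n-7$ subproducts $[x_{-k}(l),x_k(l)]$ and the distinguished element $\boldsymbol{a_s}$ splits these into a left group and a right group, at least $4n-3$ of them must lie on one of the two sides, and I would treat the two cases separately.

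In the first case, when at least $4n-3$ of the subproducts $[x_{-k}(l),x_k(l)]$ appear to the right of $a_s$, I would absorb the entire initial segment up to and including $a_s$ into a single element, which I continue to denote by $a_s$. Every element so absorbed lies in $L_0$ (both the $c_0$'s and the $[x_{-k}(l),x_k(l)]$'s have index~$0$), so the new $a_s$ remains homogeneous of index~$s$, and the hypothesis $\overline{k}\mid s$ continues to hold. The resulting product then has exactly the shape required by Lemma~\ref{l-7}: at least $4n-3$ subproducts $[x_{-k}(l),x_k(l)]$ to the right of $a_s$, with at most $(W_1-5n+5)/2$ occurrences of $c_0$ outside $a_s$. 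Applying Lemma~\ref{l-7} yields~$0$.

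In the second case, when at least $4n-3$ of the subproducts $[x_{-k}(l),x_k(l)]$ appear to the left of $a_s$, I would apply Lemma~\ref{lbasic} to the initial segment preceding $a_s$, treating its leftmost element as the ``$a_s$'' of that lemma. Every element in this initial segment lies in $L_0$, so its total index is~$0$; the bound $(W_1-5n+5)/2 \le (W_1-4n+3)/2$ (valid for $n\ge 2$; the case $n=1$ is vacuous) ensures that the $c_0$-count hypothesis of Lemma~\ref{lbasic} is satisfied. Lemma~\ref{lbasic} then rewrites this initial segment as a linear combination of products of the form (\ref{14}) or (\ref{15}), in which each $x$-quasirepresentative $\hat{x}_{\pm k}(m_j)$ has length at most $2C+4n-3\le W_{m_j}-n+2$. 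Back-substituting into the full product, each resulting summand has an initial subproduct $[v_t,\hat{x}_{\pm k}(m_1),\ldots,\hat{x}_{\pm k}(m_{2n-1})]$ of length $2n$; since its total index is~$0$ while $(2n-1)(\pm k)$ is a multiple of $\overline{k}$, the remaining index $t$ is divisible by $\overline{k}$. Lemma~\ref{l2n} then applies and forces this initial subproduct to vanish, so the full product vanishes by bilinearity.

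The main obstacle is not conceptual but combinatorial bookkeeping: one must verify that the parameter bounds in the hypothesis of Corollary~\ref{c-l-7} feed correctly through the two reductions into the hypotheses of Lemma~\ref{l-7}, Lemma~\ref{lbasic} and Lemma~\ref{l2n}, and in particular that the divisibility condition $\overline{k}\mid s$ on the original $a_s$ translates, after the rewriting provided by Lemma~\ref{lbasic}, into the divisibility needed to invoke Lemma~\ref{l2n} at the final step.
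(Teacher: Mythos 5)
Your proof is correct and follows essentially the same route as the paper: the paper's proof simply repeats the argument of Lemma~\ref{l-7} with Corollary~\ref{lbasic-c} in place of Lemma~\ref{lbasic}, and your case split on which side of $a_s$ carries at least $4n-3$ products $[x_{-k}(l),\,x_k(l)]$ is exactly the content of Corollary~\ref{lbasic-c}, which you have merely inlined (handling the right-hand case via Lemma~\ref{l-7} after absorbing the prefix into $a_s$, and the left-hand case via Lemma~\ref{lbasic} followed by Lemma~\ref{l2n}, with the same index and length bookkeeping).
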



\begin{proof} The proof is analogous to that of Lemma~\ref{l-7},
but instead of  Lemma~\ref{lbasic} we should apply
Corollary~\ref{lbasic-c}.
 \end{proof}

\section{ Construction of the soluble ideal and  $zc$-elements}

Recall that $N$ is the fixed notation for the highest
 level, which is an
 $n$-bounded number determined by subsequent arguments, and the
$L_j(N)$ are the generalized centralizers constructed
in~\S\,\ref{s-rep-cen}. We set
$$
Z=_{\rm id}\left< L_{1}(N),\,L_{2}(N),\ldots ,L_{n-1}(N)\right>.$$
 This ideal generated by the subspaces  $L_j(N)$, $j\ne 0$, has $(m,n)$-bounded
 codimension in~$L$, since each subspace $L_j(N)$ has
$(m,n)$-bounded codimension in $L_j$ for $j\ne 0$, while
 $\mathrm{dim}\,L_0= m$ by hypothesis.

We shall prove that the ideal $ Z$ is soluble of $ n$-bounded
derived length and therefore is the required one. This is proved
by repeated application of Proposition~\ref{p1} to the following
sequence of subalgebras.
\vskip1ex

First we agree to choose an increasing sequence of positive
integers \linebreak  $T_1<T_2<\ldots $, all of which are
 $n$-bounded (as well as their number) but sufficiently
large compared with $n$-bounded values of certain other parameters
appearing later in the proof. In addition we assume the
differences $T_{k+1}-T_k$  to be also sufficiently large in the
same sense. This is possible because, as we shall see in
~\S\,\ref{parametry},
 the choice of those other parameters does not depend on the
$T_k$.

 Having in mind this sequence of the $T_i$ we define by induction the
 subalgebras $Z\langle i\rangle$ (the indices $i$ of the $Z\langle i\rangle$ are simply for
enumeration) and their subspaces $ Z_{k}\left< i\right> $ as
follows.

\vskip1ex

 $1^{\circ }$. For $i=1$ we set $Z\langle i\rangle=Z=_{\rm id}\left< L_{1}(N),\,L_{2}(N),\ldots
,L_{n-1}(N)\right> $ and for each $k =0,\,1,\ldots ,n-1$  define $
Z_{k}\langle 1\rangle=Z\langle 1\rangle  \cap L_k$.

\vskip1ex
$2^{\circ }$. We set
 $$Z\langle i+1\rangle =\left\langle \sum_{r=1}^{T_i}
 \Big[\big(Z_0\langle i\rangle\big)^r,\,Z_k\langle i\rangle, \,
 \big(Z_0\langle i\rangle\big)^{T_i-r}\Big]\,\, \mid\,\,k=0,\,1,\ldots ,n-1\right\rangle
$$ (the angle brackets
denote the subalgebra generated by the subspaces indicated) and
for each $k=0,\,1,\ldots ,n-1$ define $Z_k \langle
i+1\rangle=Z\langle i+1\rangle\cap L_k$.

\vskip1ex

The process of construction of the subalgebras~$Z\langle i\rangle$
continues up to a certain $n$-bounded number of steps determined
by subsequent arguments. \vskip1ex

The definition of the $Z\langle i\rangle$ is made to suit the
conclusion of Proposition~\ref{p1}: if, say, we prove
 that the  subalgebra $Z\langle i+1\rangle$ is soluble of derived length~$d$, then
 $Z\langle i\rangle$ is
also soluble of $(d,n)$-bounded derived length, since the number
$T_i$ is $n$-bounded.

\vskip1ex

We now define elements of a special form, which generate the
subspaces $Z_0\langle i\rangle.$ All of them are
 homo\-geneous products with zero
 sum of indices. They are constructed by
induction on~$i$.  Products constructed at the $i$-th step are
called
 {\it $zc$-elements of complexity~$i$}. With each
 $zc$-element of complexity~$i$ a tuple of length
 $i+1$ is associated, which consists of non-zero residues modulo~$n$
and is called the {\it type\/} of the $zc$-element.

\vskip1ex
 $1^{\circ}$ Complexity $i=0$. For an arbitrary level $U$
a {\it $zc$-element of level $U$ of complexity $0$} is any product
of the form $\big[x_{-k}(U),\,x_k(U)\big]$ in
$x$-repre\-sen\-ta\-tives of
 level $U$ for any $k\ne 0$. The {\it type\/} of this $zc$-element is
the symbol
 $(k(U))$, where $U$ indicates the level of
the $x$-repre\-sen\-ta\-tives and $k$ is the residue modulo~$n$
indicating the compo\-nents $L_{\pm k}$ that the
$x$-repre\-sen\-ta\-tives belong to.

\vskip1ex
 We now describe  the step of the inductive construction.
We first
 choose an increasing sequence of positive integers
$S_1<S_2<\ldots $, which are all $n$-bounded (as well as their
number) but sufficiently large in comparison with $n$-bounded
values of certain other parameters of the proof.
 We assume the ratios
 $S_{k+1}/S_k$ also to be sufficiently large in the same sense.
(See~\S\,\ref{parametry} for a scheme of the choice of all of
these parameters.)
 In addition, we choose a decreasing sequence of positive integers
 $C_1>C_2>\ldots $, which are all
 $n$-bounded (as well as their number) but are sufficiently
large and the differences $C_i-C_{i+1}$ are also sufficiently
large in comparison with $n$-bounded values of certain other
parameters of the proof; the choice of the $C_i$ is also depending
on subsequent arguments (see ~\S\,\ref{parametry}).

\vskip1ex

$2^{\circ}$ Complexity $i>0$. Suppose that we have already defined
$zc$-elements of complexity $i-1$ and their types $(s_{i-1}s_{i-2}
\ldots s_{1}k(U))$.  A~{\it $zc$-element of level $U$ of
complexity $i$} is any product of the form
$$
\big[\boldsymbol{u_{-s_i}},\,[\ldots\,z_0,\,c_0,\,\ldots,
c_0,\ldots,\boldsymbol{a_{s_i}},\ldots,c_0,\ldots,
c_0,\,z_0,\,\ldots ]\,\big],
$$
where $\pm s_i\ne 0$, \ the $z_0$ are (possibly different)
$zc$-elements of one and the same type $(s_{i-1}s_{i-2} \ldots
s_{1}k(U))$, the number of the $z_0$ is
 $S_i$, the $c_0$ are (possibly different)
products of the form $[x_{-j}(0)\,x_j(0)]$ for (possibly
different)
 $j\ne 0$
(on any of the intervals between $u_{s_i}$ and the $z_0$ the
elements $c_0$ can also be absent), and the total number of the
$c_0$ is at most $C_i$. The {\it type\/} of this $zc$-element is
the symbol $(s_{i}s_{i-1} \ldots s_{1}k(U))$, where the residue
$s_i$ indicating the indice of the element $a_{s_i}$ is added on
the left to the type of the element~$z_0$.

\section{ Properties of $\boldsymbol{zc}$-elements}

As we have already noted, the importance of the $zc$-elements is
in the fact that they generate subspaces $Z_0\langle i\rangle $.

\begin{lemma}[{see  \cite[Lemma 9]{ma-khu}}]\label{l-9} For each $i\geq 0$ the subspace
$Z_0\langle i+1 \rangle$ is generated by $zc$-elements of
complexity $i$ of
 types
$(s_i\ldots s_1k(N-2))$ of level $ N-2$ for all possible tuples of
residues $s_i,\ldots ,s_1, k$.\end{lemma}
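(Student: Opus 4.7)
The plan is to proceed by induction on $i$.

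For the base case $i = 0$, the aim is to show that $Z_0\langle 1\rangle = Z \cap L_0$ is spanned by the products $[x_{-k}(N-2), x_k(N-2)]$, which are the $zc$-elements of complexity $0$ of level $N-2$. Since $Z = {}_{\rm id}\langle L_1(N), \ldots, L_{n-1}(N)\rangle$, the description of two-sided ideals given at the end of \S3 (which uses property~(\ref{tozh})) presents $Z$ as the linear span of simple homogeneous products containing exactly one centralizer $y_j(N) \in L_j(N)$ and arbitrary homogeneous elements, the $L_0$-part being those products whose total index sum is $\equiv 0 \pmod n$. Applying~(\ref{tozh}) repeatedly to relocate $y_j(N)$ to the rightmost position would yield a linear combination of products of the form $[A_{-j}, y_j(N)]$ with $A_{-j} \in L_{-j}$. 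Lemma~\ref{l-product} then replaces $A_{-j}$ by a centralizer $y_{-j}(N-1)$ of level $N-1$, and the freezing procedure (Lemma~\ref{l6}) rewrites the resulting $[y_{-j}(N-1), y_j(N)]$ as a linear combination of products $[x_{-j}(N-2), x_j(N-2)]$, as desired.

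For the inductive step, assume $Z_0\langle i\rangle$ is spanned by $zc$-elements of complexity $i-1$ of level $N-2$ (of all admissible types). By construction, $Z\langle i+1\rangle$ is the subalgebra generated by the subspaces
\[
M_k \;=\; \sum_{r=1}^{T_i} \Big[(Z_0\langle i\rangle)^r,\; Z_k\langle i\rangle,\; (Z_0\langle i\rangle)^{T_i-r}\Big], \qquad k = 0, 1, \ldots, n-1,
\]
so that $Z_0\langle i+1\rangle$ is spanned by simple products of elements of these $M_k$ whose indices sum to $0 \pmod n$. The typical building block is a two-factor product $[u_{-s_i}, G]$ with $s_i \neq 0$, $u_{-s_i} \in M_{-s_i}$, $G \in M_{s_i}$, where $G$ has the form $[w_0^{(1)}, \ldots, w_0^{(r)}, w_{s_i}, w_0^{(r+1)}, \ldots, w_0^{(T_i)}]$ with $w_0^{(j)} \in Z_0\langle i\rangle$ and $w_{s_i} \in Z_{s_i}\langle i\rangle \subseteq L_{s_i}$. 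Expanding each $w_0^{(j)}$ by the inductive hypothesis as a linear combination of $zc$-elements of complexity $i-1$ of level $N-2$, and recognizing $w_{s_i}$ as the element $a_{s_i}$, would turn $[u_{-s_i}, G]$ into a linear combination of products that match the outer shape $[u_{-s_i}, [\,\cdots\,]]$ and the inner shape $[\ldots, z_0, \ldots, a_{s_i}, \ldots, z_0, \ldots]$ of a $zc$-element of complexity $i$.

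The main obstacle is that a $zc$-element of complexity $i$ requires its $S_i$ factors $z_0$ to be of a \emph{single} common type, while direct expansion yields $T_i$ such factors of possibly many different types. I would resolve this by choosing $T_i$ large enough compared to $S_i$ times the $(m,n)$-bounded number of admissible types of $zc$-elements of complexity $i-1$, so that a pigeonhole argument applied to each expansion term secures $S_i$ positions sharing a common type. The remaining positions would be reordered via property~(\ref{tozh}) and, where needed, frozen at level $0$ via Lemma~\ref{l6} to produce $c_0$-insertions whose total number stays within the bound $C_i$. Longer products of generators and the single-generator case $k=0$ are then reduced to such two-factor building blocks by further unfolding the internal $zc$-structure of the inductive factors and applying~(\ref{tozh}).
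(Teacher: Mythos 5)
Your inductive step is broadly in the spirit of the paper (pigeonhole over the $(n-1)^i$ possible types to extract $S_i$ factors $z_0$ of a common type, freeze the rest as $c_0$'s, with $T_i$ large compared to $S_i$ times the number of types), but both of the places where you wave your hands hide the real content, and one of them is the heart of the lemma. In the base case you claim that property~(\ref{tozh}) lets you ``relocate $y_j(N)$ to the rightmost position'', reducing everything to $[A_{-j},y_j(N)]$. This fails: each application of~(\ref{tozh}) that pushes $y_j(N)$ past a neighbour produces, with the nonzero coefficient $\alpha$, an extra summand in which $y_j(N)$ is merged inside a bracket with an \emph{arbitrary} homogeneous element; the resulting factor is just some element of $L_{j+k}$, not a centralizer and not a quasicentralizer (the other factor is not a representative), so no centralizer property, no application of Lemma~\ref{l-product}, and no freezing at level $N-2$ is available for it. Because the algebra is not anticommutative there is no cheap way around this, and the paper's proof is structured precisely to avoid it: it permutes only the \emph{other} factors $a_{k_u}$ to the right of $y_j(N)$ (modulo shorter products handled by an induction on length), decomposes each $a_{k_u}$ as $b$-representatives of level $N-1$ plus a centralizer of level $N-1$, uses Lemma~\ref{l-product} and freezing when such a centralizer occurs, and in the remaining case (all factors $b_{k_u}(N-1)$ or $a_0$) runs a Chinese-remainder-theorem rearrangement to manufacture a bounded initial segment with zero index sum, which is then annihilated by the defining property~(\ref{basic-property}) of $L_j(N)$ (this is where the requirement $W_N\geq 2n^3$ comes from). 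None of this is present in your sketch.

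The second gap is in the inductive step: the generators of $Z\langle i+1\rangle$ with $k=0$, i.e.\ products $[z_0,\dots,z_0,a_0,z_0,\dots,z_0]$ all of whose factors have index $0$, cannot be ``reduced to two-factor building blocks by further unfolding'', because a $zc$-element of complexity $i\geq 1$ requires a \emph{nonzero} residue $s_i$ (the outer pair $u_{-s_i}$, $a_{s_i}$), and there is no nonzero index anywhere in such a product to supply it. The paper instead proves that these elements are equal to~$0$: after expanding the $z_0$ via the case $i=0$ and pigeonholing to get at least $S_i\geq 8n-7$ subproducts $[x_{-l}(N-2),x_l(N-2)]$ with a common index pair, Corollary~\ref{c-l-7} (resting on Lemmas~\ref{lbasic}, \ref{l2n}, \ref{l-7} and the inequalities $W_1\geq 2(T_i-S_i)+5n-5$, $N-2\geq 4n-3$) kills them. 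Your proposal contains no substitute for this vanishing argument. Two smaller points: for longer simple products of generators the paper simply takes the whole initial segment as the (unconstrained) outer element $u_{-j}$ rather than requiring it to lie in a generating subspace $M_{-s_i}$; and the number of types is $(n-1)^i$, hence $n$-bounded --- if you let $T_i$ depend on $m$, as your ``$(m,n)$-bounded number of types'' suggests, the derived-length bound in Theorem~\ref{th1} would no longer depend on $n$ alone.
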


\begin{proof} Induction on~$i$.

{\it Case $i=0$.} Here we must prove that for any
$s=0,\,1,\,2,\ldots $ and any indices $ k_1, k_2, \ldots k_s \in
\{ 0,\,1,\,\ldots ,n-1\}$   products
\begin{equation}\label{f-l-9} [a_{k_1},\,y_j(N),\,a_{k_{2}},\,\ldots, \,a_{k_s}]
\end{equation}
and
\begin{equation}\label{f2-l-9}[y_j(N),\,a_{k_{1}},\,\ldots, \,a_{k_s}]
\end{equation}
(under the Index Convention)
such that $j\ne 0$, $ j+k_1+\ldots +k_s\equiv 0\;({\rm mod\;}n)$
is equal to a linear combination of $zc$-elements of complexity
$0$ of level $N-2$, that is, products of the form
$[x_{-k}(N-2),\,x_k(N-2)]$ for $ k\ne 0$.

We use induction on $s$. If $s=0$ there is nothing to prove since
$ j\ne 0$ by the definition of the $L_j(N)$.

If $s=1$, this follows from Lemma~\ref{l-product}:
$[y_j(N),\,a_{-j}]=[y_j(N),\,y_{-j}(N-1)]$,\,\, and \,\,\,
$[a_{-j}\,y_j(N)]=[y_{-j}(N-1),\,y_j(N)]$, which we can freeze in
level $N-2$ to give it the required form.

For $s>1$  we can ``permute'' the elements $a_{k_u}$ situated to
the right of $y_j(N)$ in (\ref{f-l-9}) and (\ref{f2-l-9})
 modulo $$U=\sum_ {u=1}^{s-1}\sum\limits_
{i+i_1+\ldots+i_u\equiv 0\, ({\rm mod}\,
n)}\Big([L_{i_1},\,L_i(N),\, L_{i_{2}},\,\ldots,\,L_{i_u}]+
[L_i(N),\, L_{i_{1}},\,\ldots,\,L_{i_u}]\Big)
$$
 as follows:
$$
[a_{k_1},\,y_j(N),\ldots,
\boldsymbol{a_{k_{u}}},\boldsymbol{a_{k_{u+1}}},\ldots,
\,a_{k_s}]=\beta\,\big[a_{k_1}\,\,y_j(N),\ldots
\boldsymbol{a_{k_{u+1}}},\boldsymbol{a_{k_{u}}},\,\ldots,
\,a_{k_s}\big] \;({\rm mod\;}U\big)
$$
and
$$
[y_j(N), \,a_{k_1},\ldots,
\boldsymbol{a_{k_{u}}},\boldsymbol{a_{k_{u+1}}},\ldots,
\,a_{k_s}]=\beta\,\big[y_j(N),\,\,a_{k_1},\ldots
\boldsymbol{a_{k_{
u+1}}},\boldsymbol{a_{k_{u}}},\,\ldots,
\,a_{k_s}\big] \;({\rm mod\;}U\big).
$$

 By the induction hypothesis all elements  of$U$  can be expressed
in the required form. Therefore we may freely ``permute'' the
$a_{k_u}$ to the right of $y_j(N)$ in order  to express our
products in the required form.

We express every element $a_{k_u}$ with non-zero index $k_u\ne 0$
as a sum of a linear combination of $b$-representatives
$b_{k_u}(N-1)$ and a centralizer   $y_{k_u}(N-1)$ of level $N-1$
and substitute all these expressions into the products. We obtain
a linear combination of products (\ref{f-l-9}) and (\ref{f2-l-9})
\begin{equation}\label{f3-l-9}
[z_{k_1}, y_j(N),\,z_{k_2},\ldots,\,z_{k_s}]
\end{equation}
or, respectively,
\begin{equation}\label{f4-l-9}
[y_j(N),\,z_{k_1},\ldots,\,z_{k_s}],
\end{equation}
where the $z_{k_u}$ are either $b_{k_u}(N-1)$, or $y_{k_u}(N-1)$,
or $a_0$ (and the condition $ j+k_1+\cdots +k_s\equiv 0\;({\rm
mod\;}n)$ remains). If in (\ref{f3-l-9}) and (\ref{f4-l-9}) among
the $z_{k_u}$ situated to the right of $y_j(N)$ there is at least
one $y_{k_u}(N-1)$, then we ``transfer'' it to the right end of
the product (at each step
 multiplying by $\beta$),  denote by $a_{-k_u}$ the preceding
initial segment, and apply Lemma~\ref{l-product}:
$[a_{-k_u}\,y_{k_u}(N-1)]=[y_{-k_u}(N-2),\,y_{k_u}(N-1)]$, which
is  of required form after being frozen in level $N-2$. Similar
transformations should be made if $z_{k_1}=y_{k_1}(N-1)$ is a
centralizer of level $N-1$ in the product (\ref{f3-l-9}).  In this
case the element $y_j(N)$ takes over the role of $y_{k_u}(N-1)$.
We ``transfer'' it to the right end of the product (all additional
summands are in $U$ and have the required form by the induction
hypothesis),  denote by $a_{-j}$ the preceding initial segment and
apply Lemma~\ref{l-product} to $[a_{-j},\,y_{j}(N)]$. We obtain
the product $[y_{-j}(N-1),\,y_{j}(N)]$, which is of required form
after being frozen in level $N-2$.

We now  consider the  case where all the $z_{k_u}$ in
(\ref{f3-l-9}) and (\ref{f4-l-9}) are either $b_{k_u}(N-1)$, or
$a_0$. We claim that in such a product a suitable permutation of
the $ z_{k_u}$ produces an initial segment of bounded length with
zero sum of indices modulo~$n$.

For each index $u\ne 0$ that occurs less than $n^2$ times  we
 ``transfer'' all the $b_u(N-1)$ situated to the right of $y_j(N)$ (if any)
 to the left to place them right after
$y_j(N)$. Let $\hat y_{t}(N)$ denote the initial segment of length
$\leq n^3+1$ (plus 1 for the first element $z_{k_1}$ of
(\ref{f3-l-9}) formed in this way). Let $v_1,\ldots , v_r$,
\,$r\leq n-1$, be the other non-zero indices
 such that for each $v_i$ there are at least $n^2$
elements $ b_{v_i}(N-1)$ in the product. If there are no such
indices, then we must have $t=0$, since the original sum of
indices was $0$ modulo~$n$. Then  $\hat y_{t}(N)=0$  by
(\ref{basic-property}) if $W_N\geq n^3$. Let $d=(v_1,\ldots ,v_r)$
be the greatest common divisor of the $v_1, \ldots ,v_r$. Since
the sum of all indices is $ 0$ modulo $n$, the number $\overline{
d}=(d,n)$ must divide~$t$.
 By the Chinese
remainder theorem there exist integers $u_i$ such that  $ d= u
_1v_1+\cdots +u_rv_r$. Replacing the $u_i$ by their residues
modulo $n$ and changing notation we have $ d= u _1v_1+\cdots
+u_rv_r+un$, where $ u_i\in \{ 0,\,1,\ldots ,n-1\}$  for all $ i$
and $u$ is an integer. We can find an integer $ w\in \{
0,\,1,\ldots ,n-1\}$ such that $ t+w(u _1v_1+\cdots +u_rv_r)
\equiv 0\;({\rm mod\;}n)$. Indeed, this is equivalent to $t+wd
\equiv 0\;({\rm mod\;}n) $, which has the required solution
because $\overline{d}$ divides $t$, as we saw above.

 We now arrange an initial segment of the product by placing  after
$\hat y_{t}(N)$ exactly $wu_1$ elements $b_{v_1}(N-1)$, then
exactly $wu_2$ elements $ b_{v_2}(N-1)$, and so on, up to exactly
$wu_r$ elements $b_{v_r}(N-1)$. This initial segment has zero sum
of indices modulo $n$ and has length
 $\leq n^3+1+n^3$. Hence
it is equal to $0$ if $W_N\geq 2n^3$.

\vskip1ex {\it Case $i>0$}. By definition the algebra $Z\langle
i+1\rangle$ is generated by the products of the form
\begin{equation}\label{f5-l-9}
[\underbrace{z_0,\ldots, z_0}_{t}, a_{j},\,\underbrace{z_0,\ldots,
z_0}_{T_i-t}],
\end{equation}
where $t=0,\ldots, T_i$, $a_{j}\in
Z_{j} \langle i\rangle $ for various~$j$ and the $z_0$ are
(possibly different)
 elements of $Z_{0}\langle i\rangle $. By definition any
element of $Z_{0}\langle i+1\rangle $ is a linear combination of
simple products in elements of the form (\ref{f5-l-9})
 with zero sum of indices.

First suppose that the length of such a simple product is~$1$,
that is, it is an element of the form (\ref{f5-l-9}) with~$j=0$.
By the obvious inclusions
\begin{equation}\label{f7-l-9}
Z_{k}\langle
1\rangle \supseteq Z_{k}\langle  2\rangle \supseteq \dots
\supseteq Z_{k}\langle i\rangle  \supseteq Z_{k}\langle i+1\rangle
\supseteq \dots.
\end{equation}
all the $z_0$ in \ref{f5-l-9} belong also to $Z_{0}{\langle
1\rangle }$ and by the case $i=0$ proved above are equal to linear
combinations of elements of the form $[x_{-k}(N-2),\,x_k(N-2)]$
(for various $k \ne 0$). Since $ T_i$ can be chosen greater than
$S_i(n-1)$, each product in the linear combination obtained by
substitutions of these expressions for the $z_0$ has at least
$S_i$ subproducts of the form $[x_{-l}(N-2),\,x_l(N-2)]$ with one
and the same pair of indices $\pm l \ne 0$. (Here and in what
follows, the estimates of parameters are quite rough, we do not
aim to give the exact values, but rather show their existence.)
Choosing exactly $S_i$ of them we freeze in level $0$ (and length
2) the others, together with subproducts
$[x_{-k}(N-2),\,x_k(N-2)]$ with $k\ne l$ and denote them by $c_0$
adding to the $c_0$-occur\-rences. Their total number in each
product is at most
 $ T_i-S_i$. For $$W_1 \geq 2(T_i-S_i)+5n-5,
\;\;\;\;\;S_i\geq 8n-7\;\;\;\;{\rm and}\;\;\; N-2\geq 4n-3$$ the
resulting products satisfy the hypothesis of
Corollary~\ref{c-l-7}, which implies that they are all equal
to~$0$.

Thus, we only need to consider the aforementioned simple products
of length~$\geq 2$. Isolating the last element of the form
(\ref{f5-l-9}) in such a simple product and denoting by $a_{-j}$
the preceding initial segment we represent this simple product in
 the form
 \begin{equation}\label{f6-l-9}
\big[a_{-j},\,[\underbrace{z_0,\ldots,
z_0}_{t},a_j,\,\underbrace{z_0,\ldots, z_0}_{T_i-t}]\big].
\end{equation}
If $j=0$, then, as shown above, the
 subproduct $[\underbrace{z_0,\ldots,
z_0}_{t},\,a_j,\,\underbrace{z_0,\ldots, z_0}_{T_i-t}]$ is equal
to~$0$; hence we may assume that $j\ne 0$. In the product
(\ref{f6-l-9}) each of the $z_0$ by the induction hypothesis
 is a linear combination of $zc$-elements
of (possibly different) types $(t_{i-1}\ldots t_1l(N-2))$ and
therefore
 each of the $z_0$ can be assumed to be such a
$zc$-element. The number of all possible types
 $(t_{i-1}\ldots t_1l(N-2))$ is~$(n-1)^i$ and
 is $n$-bounded for $n$-bounded~$i$. If $T_i$
 are chosen to be $>S_i(n-1)^i$, then among the $z_0$ we can choose
 $S_i$ \ $zc$-elements of one and the same type
$(s_{i-1}\ldots s_1k(N-2))$. The other elements $z_0$ belong to
$Z_{0}\langle 1\rangle $ by (\ref{f7-l-9}). By the case $i=0$
(proved above) they are linear combinations of products of length
2 with zero sum of non-zero
 indices. These
products can be frozen in level $0$ and regarded as elements of
the form $c_0$ mentioned in the definition of $zc$-elements. Their
total number in each product of the linear combination obtained
after
 substitution into (\ref{f6-l-9}) does not exceed $T_i-S_i$. If we
 choose
 $C_i\geq T_i-S_i$, then the element (\ref{f6-l-9})
is a linear combination of $zc$-elements of the type
$(js_{i-1}\ldots s_1k(N-2))$. This completes the proof of the
lemma.
\end{proof}

{\bf Definition} We call the $zc$-elements of complexity $j$
occurring at the $j$-th step of the inductive construction of a
$zc$-element $h$ of the type $(s_i \ldots s_1k(H))$ and of
complexity $i \geq j$ {\it $zc$-elements of the type $(s_{j}
\ldots s_1k(H))$ embedded\/} in the $zc$-element~$h$. Thus, in $h$
there are embedded $S_i$ \ $zc$-elements of complexity $i-1$ of
the type $(s_{i-1} \ldots s_1k(H))$, in each of which there are
embedded $S_{i-1}$ \ $zc$-elements of the type $(s_{i-2} \ldots
s_1k(H))$, and so on. Altogether in
 $h$ there are embedded $ S_iS_{i-1}\ldots S_{j+1}$ \ $zc$-elements of
the type $(s_{j} \ldots s_1k(H))$.

\vskip1ex
 With a suitable choice of the parameters
$C_i$ and $S_i$ any substitution of
 $zc$-elements of some lower complexity $l<j$
instead of all embedded elements of a given complexity $j$ in a
given
 $zc$-element of complexity $i\geq j$ produces again a $zc$-element of
(lower) complexity $i-j+l$ (even if the types of the $zc$-elements
that are substituted are different). We shall, however, need only
certain quite special cases of this fact, mainly the case of
$l=0$, which we consider in the following lemma. \vskip1ex

 \begin{lemma}[{see  \cite[Lemma 10]{ma-khu}}] \label{l-10} Suppose that $h$ is a $zc$-element of type $(s_i
\ldots s_1k(H))$. If all the $zc$-elements of type $(s_{i_0}
\ldots s_1k(H))$
 embedded in $h$, where $i_0 \leq i$, are represented as
 linear combinations of products in $x$-repre\-sen\-ta\-tives of the form
 $[x_{-t_j}(T),\,x_{t_j}(T)]$, $j=1,\,2, \ldots $, then $h$
 can be represented as a linear combination of
$zc$-elements of the types $(s_i \ldots s_{i_0+1}t_j(T))$ of
complexity $i-i_0$ for the same
 numbers~$t_j$, $j=1,\,2, \ldots $ .\end{lemma}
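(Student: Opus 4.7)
The plan is to prove the lemma by induction on the difference $i - i_0 \geq 0$, peeling off one outer layer of the $zc$-element construction at each step.

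For the base case $i = i_0$, the element $h$ itself is the unique embedded $zc$-element of type $(s_{i_0}\ldots s_1k(H))$, so the hypothesis immediately expresses $h$ as a linear combination of products $[x_{-t_j}(T),\,x_{t_j}(T)]$. These are $zc$-elements of type $(t_j(T))$ of complexity $0$, and since the prefix $s_i\ldots s_{i_0+1}$ is empty when $i=i_0$, this is precisely what is required.

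For the inductive step with $i > i_0$, I write $h$ in its defining outer form
$$h \,=\, \big[u_{-s_i},\,[\ldots\,z_0^{(1)},\,c_0,\,\ldots,\,a_{s_i},\,\ldots,\,z_0^{(S_i)},\,c_0,\,\ldots]\big],$$
where each $z_0^{(p)}$ is a $zc$-element of type $(s_{i-1}\ldots s_1k(H))$ and complexity $i-1$. Each embedded $zc$-element of type $(s_{i_0}\ldots s_1k(H))$ in $h$ lies inside exactly one of the $z_0^{(p)}$, so the inductive hypothesis applies to every $z_0^{(p)}$ individually (with complexity $i-1$ and difference $(i-1)-i_0 = (i-i_0)-1$), yielding
$$z_0^{(p)} \,=\, \sum_j \lambda_j^{(p)}\,\zeta_j^{(p)},$$
where each $\zeta_j^{(p)}$ is a $zc$-element of type $(s_{i-1}\ldots s_{i_0+1}t_j(T))$ and complexity $i-1-i_0$, the indices $t_j$ matching those appearing in the original hypothesis. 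Substituting these expressions back into the outer product and expanding by multilinearity, $h$ becomes a linear combination of products of the form
$$\big[u_{-s_i},\,[\ldots\,\zeta_{j_1}^{(1)},\,c_0,\,\ldots,\,a_{s_i},\,\ldots,\,\zeta_{j_{S_i}}^{(S_i)},\,c_0,\,\ldots]\big].$$
Those terms in which the indices $j_1,\ldots,j_{S_i}$ all take the same value $j_0$ fit the defining form of a $zc$-element of type $(s_i\ldots s_{i_0+1}t_{j_0}(T))$ and complexity $i-i_0$ directly, so for them nothing more is required.

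The main obstacle is the treatment of the remaining ``off-diagonal'' terms, in which the $j_p$'s disagree across the $S_i$ inner slots and the product therefore fails the sameness-of-type requirement in the definition of a $zc$-element. To handle these I plan to apply the freezing procedure of Lemma~\ref{l6} to the ``minority'' brackets $[x_{-t_{j_p}}(T),\,x_{t_{j_p}}(T)]$, re-freezing them at level $0$ to express them as linear combinations of products $[x_{-t_{j_p}}(0),\,x_{t_{j_p}}(0)]$, which are legitimate $c_0$-occurrences in the definition of a $zc$-element of the dominant type. This requires that $S_i$ be large enough for a dominant type to emerge and that $C_i$ be large enough to absorb the additional $c_0$-occurrences so produced; both are guaranteed by the hierarchical choice of parameters described in \S\,\ref{parametry} (with $S_i$ growing very quickly and $C_i$ chosen with ample slack above the bounds of the previous step). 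Verifying that all parameter bounds are respected throughout the induction is the most delicate piece of bookkeeping, and is precisely the reason for the specific growth conditions imposed on the sequences $S_i,C_i,W_k$.
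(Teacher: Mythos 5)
Your overall strategy coincides with the paper's: induct on $i-i_0$, substitute the inductive hypothesis into each of the $S_i$ inner slots, extract a dominant type by pigeonhole, and freeze the remaining slots into $c_0$-occurrences. However, there is a genuine gap in the way you split cases. You assert that the ``diagonal'' terms, in which all $S_i$ inner $zc$-elements already have the same type $(s_{i-1}\ldots s_{i_0+1}t_{j_0}(T))$, ``fit the defining form of a $zc$-element of type $(s_i\ldots s_{i_0+1}t_{j_0}(T))$ and complexity $i-i_0$ directly, so for them nothing more is required.'' That is false: the definition of a $zc$-element of complexity $i-i_0$ requires \emph{exactly} $S_{i-i_0}$ embedded $zc$-elements at the next complexity down, whereas your diagonal term has $S_i$ of them, and the parameter choice forces $S_i>S_{i-i_0}(n-1)\geq S_{i-i_0}$. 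So even the diagonal terms need freezing. The paper's proof deliberately treats all terms uniformly: after substitution and pigeonhole, it selects \emph{exactly} $S_{i-i_0}$ occurrences of the dominant type and freezes every other slot (both the wrong-type ones and the surplus dominant-type ones), raising the $c_0$-count to at most $C_i+S_i$, which is then controlled by $C_{i-i_0}-C_i\geq S_i$.

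A secondary imprecision: you speak of applying Lemma~\ref{l6} to the ``minority brackets $[x_{-t_{j_p}}(T),\,x_{t_{j_p}}(T)]$'', but after the inductive substitution the inner slots contain $zc$-elements $\zeta_{j_p}^{(p)}$ of complexity $i-1-i_0$, not those simple brackets (except when $i-i_0=1$). What must actually be frozen are those full $zc$-elements $\zeta_{j_p}^{(p)}$; this is legitimate since any $zc$-element is of the outer form $[u_{-s},\,B_s]\in L_0$ and hence freezable by Lemma~\ref{l6}, but the object you name is not the one you need to freeze. Both issues are repairable within your framework, and the parameter constraints $S_i>S_{i-i_0}(n-1)$ and $C_{i-i_0}-C_i\geq S_i$ that you defer to \S\,\ref{parametry} are exactly the ones the paper invokes; still, as written your argument makes an incorrect claim about the diagonal terms and misidentifies the objects being frozen.
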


\begin{proof} Induction on $i-i_0$. For $i=i_0$ the assertion is trivial.

For $i-i_0>0$ in the $zc$-element of type $(s_i\ldots s_1k(H))$
\begin{equation}\label{f1-l-10}
\big[\boldsymbol{u_{-s_i}},\,[c_0,\ldots, z_0, c_0,\ldots, c_0,
\boldsymbol{a_{s_i}},c_0,\ldots, c_0,z_0,\ldots, c_0]\big]
\end{equation}
the $z_0$ are (possibly different)
$zc$-elements of the type $(s_{i-1}\ldots s_1k(H))$ and their
number is~$S_i$. By the induction hypothesis each of the $z_0$ is
a linear combination of $zc$-elements of the types $(s_{i-1}
\ldots s_{i_0+1}t_j(T))$ for the numbers $t_j$ given in the lemma.
After substituting these expressions
 into (\ref{f1-l-10}) we may assume that the
 element under consideration is a linear combination of products of
the form (\ref{f1-l-10}), where the $z_0$ are $zc$-elements of the
types $(s_{i-1} \ldots s_{i_0+1}t_j(T))$. Since the indices $t_j$
are non-zero residues modulo~$n$ and the number $S_i$ can be
chosen to be $>S_{i-i_0}(n-1)$, among the $zc$-elements $z_0$
there are at least $S_{i-i_0}$
 elements of one and the same type, say, $(s_{i-1}
\ldots s_{i_0+1}t_{j_0}(T))$. Choosing exactly $S_{i-i_0}$ of them
we freeze in level $0$  the others, together with those where
$t_j\ne t_{j_0}$, thus adding them to the $c_0$-occur\-rences. The
total number of
 $c_0$-occur\-rences becomes at most $C_i +S_i$. For
$C_{i-i_0}-C_i\geq S_i$ we obtain a $zc$-element of the type
$(s_{i} \ldots s_{i_0+1}t_{j_0}(T))$.
\end{proof}

\begin{lemma}\label{l-101}
 Suppose that $h$ is a $zc$-element of type $(s_i \ldots
s_1k(H))$. If all the $zc$-elements of type $(s_{i_0} \ldots
s_1k(H))$, $i_0\leq i$,
 embedded in $h$ are represented as
 linear combinations of $zc$-elements of
the types $(s_{i_0}t_j(T))$, $j=1,\,2, \ldots $, then $h$
 can be represented as a linear combination of
$zc$-elements of the types $(s_i \ldots s_{i_0}t_j(T))$ of
complexity $i-i_0+1$ for the same
 numbers~$t_j$, $j=1,\,2, \ldots $ .\end{lemma}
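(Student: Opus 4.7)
I would prove this by induction on $i - i_0$, following closely the argument of Lemma \ref{l-10}. The only genuine difference is that the terminal replacement inside $h$ now yields $zc$-elements of complexity $1$ (of type $(s_{i_0} t_j(T))$) rather than complexity $0$ (of type $(t_j(T))$), which precisely accounts for the extra $+1$ in the complexity of the output. The base case $i = i_0$ is immediate: $h$ is itself a $zc$-element of type $(s_{i_0} \ldots s_1 k(H))$, embedded in itself at that depth, so the hypothesis directly writes $h$ as a linear combination of $zc$-elements of types $(s_{i_0} t_j(T)) = (s_i \ldots s_{i_0} t_j(T))$ of complexity $1 = i - i_0 + 1$.

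For the inductive step $i - i_0 > 0$, write $h$ in its standard form
$$\big[u_{-s_i},\,[c_0,\ldots,z_0,c_0,\ldots,c_0,a_{s_i},c_0,\ldots,c_0,z_0,\ldots,c_0]\big],$$
where the $S_i$ inner elements $z_0$ are $zc$-elements of type $(s_{i-1} \ldots s_1 k(H))$. Observe that the $zc$-elements of type $(s_{i_0} \ldots s_1 k(H))$ embedded in $h$ are precisely those embedded in the various $z_0$'s, so the given representations make the hypothesis of the lemma valid for each $z_0$ separately. Applying the induction hypothesis to each $z_0$ (with $i-1$ in place of $i$ and the same $i_0$), each $z_0$ becomes a linear combination of $zc$-elements of types $(s_{i-1} \ldots s_{i_0} t_j(T))$ of complexity $(i-1) - i_0 + 1 = i - i_0$.

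Substituting these expressions into $h$ and noting that the possible resulting types differ only in the final residue $t_j \in \{1, \ldots, n-1\}$ (the preceding indices $s_{i-1}, \ldots, s_{i_0}$ being fixed), the standard pigeon-hole argument of Lemma \ref{l-10} shows that if $S_i$ was chosen to satisfy $S_i \geq S_{i - i_0 + 1}(n-1)$, then at least $S_{i - i_0 + 1}$ of the substituted inner $z_0$'s share a common type $(s_{i-1} \ldots s_{i_0} t_{j_0}(T))$. Retain those $S_{i - i_0 + 1}$ elements in their positions and freeze the remaining $z_0$'s---which are products of length $2$ with zero sum of indices---in level $0$, absorbing them into the $c_0$-occurrences. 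The total $c_0$-count becomes at most $C_i + S_i$, which is at most $C_{i - i_0 + 1}$ provided $C_{i - i_0 + 1} - C_i \geq S_i$. The outcome is a $zc$-element of type $(s_i s_{i-1} \ldots s_{i_0} t_{j_0}(T))$ of complexity $i - i_0 + 1$, yielding the required linear combination.

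The only nontrivial point is parameter bookkeeping: the inequalities $S_i \geq S_{i - i_0 + 1}(n-1)$ and $C_{i - i_0 + 1} - C_i \geq S_i$ must cohere with all analogous inequalities imposed by the other lemmas. As foreshadowed in \S\,\ref{parametry}, this is arranged by choosing the sequence $\{S_i\}$ to grow sufficiently quickly and $\{C_i\}$ to decrease sufficiently quickly, and by ensuring that both choices are made independently of the $T_k$.
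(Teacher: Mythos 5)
Your proposal is correct and follows essentially the same route as the paper, which proves Lemma~\ref{l-101} by repeating the argument of Lemma~\ref{l-10} with $zc$-elements of the types $(s_{i_0}t_j(T))$ substituted in place of the length-two products $[x_{-t_j}(T),x_{t_j}(T)]$, under the same parameter conditions ($S_{i+k}/S_i\geq n$ and $C_j-C_{j+k}\geq S_{j+k}$) that you record as $S_i\geq S_{i-i_0+1}(n-1)$ and $C_{i-i_0+1}-C_i\geq S_i$. Your write-up merely makes explicit the induction on $i-i_0$ and the pigeon-hole/freezing step that the paper leaves implicit by reference to Lemma~\ref{l-10}.
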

\begin{proof}
 We carry out an argument
analogous to the proof of Lemma \ref{l-10}. The only difference
with the proof of Lemma~\ref{l-10} is that we substitute not
products in $x$-repre\-sen\-ta\-tives of the form
 $\big[x_{-t_j}(T),\,x_{t_j}(T)\big]$ for various $t_i$, but $zc$-elements of the types
 $(s_jt_2(T ))$ for one and the same~$s_j$ with various
 $t_2$. The
conditions on the numbers $ S_i$ and $C_i$ that are required are
quite similar: $S_{i+k}/S_i\geq n$ and $C_j-C_{j+k}\geq S_{j+k}$.
\end{proof}

The following lemma is an analog of Lemma 11 in \cite{ma-khu}. The
part (a), which we shall refer as a ``modular'' part, allows to
``jump'' levels in order to skip unsuitable residues in
$zc$-elements in order to bring together equal, or dividing each
other, residues. The ``unmodular'' part (b) allows to ``collide''
coprime or ``relatively coprime'' residues.

\begin{lemma}[{see  \cite[Lemma 11]{ma-khu}}]\label{l-contraction} Any $zc$-element
\begin{equation}\label{f1-l-11}
\big[\boldsymbol{u_{-s}}, [c_0,\ldots, c_0,z_0,c_0,\ldots,
\boldsymbol{a_{s}},\ldots, c_0,z_0,c_0,\ldots, c_0]\big]
\end{equation}
 of type $(sk(H))$ and of level $H \geq 8n+1$ can be represented

{\rm (a)} as a linear combination of products of the form
$[x_{-t}(H-8n ),\, x_t(H-8n)]$ for $($possibly different\/$)$ $t$
such that $\overline{ t}$ divides $\overline{ k}$,
 and

{\rm (b)} as a linear combination of products of the form
$[x_{-r}(H-8n ),\, x_r(H-8n )]$ for $($possibly different\/$)$ $r$
such that $(\overline{ r},\overline{ k})$ divides $(\overline{ s},
\overline{ k})$ {\rm (}in the particular case when $\overline{ s}$
and $\overline{ k}$ are coprime this is equivalent to $\overline{
r}$ and $\overline{ k}$ being coprime{\rm )}.
\end{lemma}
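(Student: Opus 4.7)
The plan is to mimic the argument of \cite[Lemma~11]{ma-khu}, using the Lie-type identity~(\ref{tozh}) in place of the Jacobi identity to commute neighbouring elements. Write the $zc$-element of type $(sk(H))$ as $[u_{-s},W]$, where $W$ is the inner bracket containing $S_1$ subproducts $z_0=[x_{-k}(H),x_k(H)]$ and at most $C_1$ subproducts $c_0$. By the parameter choices of \S\ref{parametry} we may assume $S_1\geq 8n-7$ and $C_1\leq (W_1-5n+5)/2$, and the hypothesis $H\geq 8n+1\geq 4n-3$ puts $W$ within the scope of both Corollary~\ref{lbasic-c} and Corollary~\ref{c-l-7}.

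For part~(a), if $\overline k$ divides~$s$ then Corollary~\ref{c-l-7} yields $W=0$ immediately, so the $zc$-element vanishes and the claim is trivial. If $\overline k\nmid s$, I apply Corollary~\ref{lbasic-c} to $W$ with $4n-3$ pairwise distinct levels chosen in the interval $[H-8n+1,\,H-4n-2]$, rewriting $W$ as a linear combination of products
$$P=[v_{t_0},\,\hat x_{\epsilon k}(n_{i_1}),\,\ldots,\,\hat x_{\epsilon k}(n_{i_{2n-1}})],\qquad \epsilon=\pm 1,$$
whose $x$-quasirepresentatives have length~$\leq 2C_1+4n-3$ and levels $n_{i_j}\geq H-8n+1$. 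I then expand $[u_{-s},P]$ into a sum of simple products by iterated application of~(\ref{tozh}) and commute $u_{-s}$ with its neighbours so as to pair it eventually with opposite-sign quasirepresentatives created from the $z_0$'s that remained uncollapsed in Corollary~\ref{lbasic-c}. Each extra summand produced in the commutations has zero sum of indices and length $\leq W_{n_{i_j}}+1$, and is killed by Lemma~\ref{lkvasic} after lifting the quasirepresentatives to centralizers of level $\geq H-8n$ via Lemma~\ref{l-kv-cen}. The surviving opposite-sign pairs are frozen at level $H-8n$ through Lemma~\ref{l6}. A tracking of indices shows that the $\pm s$-contributions of $a_s$ and $u_{-s}$ cancel in the final pairing, so each surviving index $t$ is a $\Z$-linear combination of $\pm k$ modulo~$n$, whence $\overline t$ divides $\overline k$.

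Part~(b) proceeds along the same lines, but without insisting on cancelling the $s$-contributions. The surviving indices $r$ are $\Z$-linear combinations of $s$ and $k$ modulo~$n$, so $(\overline r,\overline k)$ divides $\gcd(\overline s,\overline k)$ by standard arithmetic (noting that the particular case when $\overline s$ and $\overline k$ are coprime forces $\overline r$ to be coprime to $\overline k$).

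The principal obstacle is the bookkeeping needed to keep every intermediate quasirepresentative within the scope of Lemma~\ref{lkvasic} (index-sum~$0$ and total length $\leq W_{n_{i_j}}+1$), while simultaneously ensuring that the drop from level~$H$ to~$H-8n$ is adequate for the freezing procedure of Lemma~\ref{l6} and that the number of $c_0$-occurrences accumulated along the way does not violate the hypothesis of Corollary~\ref{c-l-7}. Threading these estimates back through the argument is precisely what dictates the size relations on the parameters~$W_i$, $S_i$, and $C_i$ set out in~\S\ref{parametry}.
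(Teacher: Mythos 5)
There is a genuine gap, in fact two. First, the structural mechanism that produces the final products $[x_{-t}(H-8n),\,x_t(H-8n)]$ is missing. In the paper the element $a_s$ is decomposed as a linear combination of $b$-representatives $b_s(H-4n)$ plus a centralizer $y_s(H-4n)$, the $c_0$'s to its right are transferred over it (creating $\hat b_s(H-4n)$, $\hat y_s(H-4n)$), Lemma~\ref{lbasic} is applied with two \emph{different} level windows ($(H-4n,H)$ for the $\hat b_s$-terms, $(H-8n,H-4n)$ for the $\hat y_s$-terms), and then the nested subproduct $[\hat x_k(n_j),[\dots[\hat x_k(n_1),\hat b_s(H-4n)]\dots]]$ is recognized as an $x$-quasirepresentative of controlled level and length, hence a centralizer of level $\geq H-8n$ by Lemma~\ref{l-kv-cen}, after which Lemma~\ref{l-product} converts the remaining (arbitrary) initial segment into a centralizer as well, so that Lemma~\ref{l6} can freeze the pair at level $H-8n$. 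You never decompose $a_s$ and never invoke Lemma~\ref{l-product}; but your ``surviving opposite-sign pairs'' have as left factor a long initial segment containing $u_{-s}$, $c_0$'s, etc., which is not a centralizer of any positive level, so Lemma~\ref{l6} simply does not apply to it, and without the $b_s/y_s$ decomposition the factor containing $a_s$ is not a quasirepresentative or quasicentralizer, so Lemma~\ref{l-kv-cen} cannot be applied to it either. (A secondary point: applying Corollary~\ref{lbasic-c} inside the inner bracket and only afterwards expanding $[u_{-s},P]$ by~(\ref{tozh}) can permute the factors and destroy the block of $2n-1$ consecutive $\hat x_{\pm k}$'s you just collected; the paper expands the inner bracket first for exactly this reason.)

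Second, and decisively, your index arithmetic is wrong in both parts, and this is where the real content of the lemma lies. In (a), if a surviving index $t$ were merely an integer multiple of $k$ modulo $n$, then $\overline{k}$ divides $\overline{t}$, not the other way around (take $n=12$, $k=2$, $t=3k=6$: $\overline{t}=6$ does not divide $\overline{k}=2$), so ``$t$ is a combination of $\pm k$, whence $\overline{t}\mid\overline{k}$'' is false. In (b), a general integer combination $r$ of $s$ and $k$ need not satisfy $(\overline{r},\overline{k})\mid(\overline{s},\overline{k})$: with $n=12$, $s=2$, $k=3$, $r=3s=6$ one gets $(\overline{r},\overline{k})=3\nmid 1=(\overline{s},\overline{k})$. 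The correct conclusions hold only because the surviving indices are exactly $\pm k$ or of the special form $s+jk$ with $s$ occurring with coefficient one, and because in part (a) the number $j$ of transfer steps is \emph{chosen} via Lemma~\ref{l-12}(b) so that $\overline{s+jk}=(\overline{s},\overline{k})$, while part (b) uses Lemma~\ref{l-12}(c) that $(\overline{s+jk},\overline{k})=(\overline{s},\overline{k})$ for every $j$. Your proposal never invokes Lemma~\ref{l-12} and has no mechanism for stopping the transfer at the prescribed step $j\leq n-1$, and the claim that the $\pm s$-contributions of $u_{-s}$ and $a_s$ ``cancel in the final pairing'' cannot be arranged: $u_{-s}$ is the leftmost element and in the unavoidable terms the material containing $a_s$ ends up in the right-hand factor, giving index $s+jk$ rather than a multiple of $k$. (Your preliminary reduction for $\overline{k}\mid s$ via Corollary~\ref{c-l-7} is fine, but it does not touch the main case.)
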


\begin{proof} The proof of the lemma repeats virtually word-by-word the proof of Lemma~11
in~\cite{ma-khu}. We should only replace the Jacoby identity by
(\ref{tozh}).

\vskip1ex
  By expanding the inner bracket by  (\ref{tozh})
we represent the product (\ref{f1-l-11}) as a linear combination
of products of the form
\begin{equation}
\label{f2-l-11} [\boldsymbol{u_{-s}},\, c_0,\ldots,
c_0,z_0,c_0,\ldots, \,\boldsymbol{a_{s}},\,c_0,\ldots,
c_0,z_0,c_0,\ldots, c_0]\end{equation} where, recall, the $z_0$
are (possibly different) products of the form $[x_{-k}(H),x_k(H)]$
with one and the same $k$ and $H$. If $S_1$ is at least $8n-7$,
then in each product (\ref{f2-l-11})
 there are at least $4n-3$
elements $z_0$ on the right or on the left of~$a_s$. If in
(\ref{f2-l-11}) there are at least $ 4n-3$ elements $z_0$ on the
right of~$a_s$, then
 the product
 (\ref{f2-l-11}) is equal to 0 by Lemma~\ref{l-7} (since $H \geq 4n-3$ and the numbers
 $W_i$ can be chosen $\geq 2C_1+5n-5$). Hence it suffices to
consider the products (\ref{f2-l-11}) in which there are at least
$ 4n-3$ elements $z_0$ on the left of~$ a_s$ and at most $4n-4$ on
the right of~$a_s$.

We substitute into such a product (\ref{f2-l-11}) the expression
$a_s$ as a sum of a linear combination of corresponding
$b$-repre\-sen\-ta\-tives $b_s(H-4n)$ and an element $y_s(H-4n)\in
L_s(H-4n)$. Then  (\ref{f2-l-11}) is equal to the sum of a linear
combination of products
\begin{equation}\label{f4-l-11}
 [{u_{-s}}, c_0,\ldots,
c_0,z_0,c_0,\ldots, \, {b_s}(H-4n), \ldots, c_0,z_0,c_0,\ldots,
c_0].
\end{equation}
and
\begin{equation}\label{f3-l-11}
[{u_{-s}}, c_0,\ldots, c_0,z_0,c_0,\ldots, \, {y_s}(H-4n), \ldots,
c_0,z_0,c_0,\ldots, c_0].
 \end{equation}
We freeze all the elements $z_0$ on the right of $b_s(H-4n)$ and
$y_s(H-4n)$ in (\ref{f4-l-11}) and (\ref{f3-l-11}), respectively,
in the form of products of length $2$ in level $0$, thus adding
them to the $c_0$-occur\-rences. Then both in (\ref{f4-l-11}) and
in (\ref{f3-l-11}) by using (\ref{tozh}) we ``transfer'' all the
$c_0$ that are on the right of $b_s(H-4n)$ and $y_s(H-4n)$
successively to the left over the elements $b_s(H-4n)$ and
$y_s(H-4n)$, respectively:
$$
 [\ldots,  b_s(H-4n), \,c_0,\ldots]=\beta\,\big[\ldots,
 c_0, {b_s}(H-4n),\ldots]+\alpha\,\big[ \ldots,
[{b_s}(H-4n),{c_0}],\ldots, \big],
$$
$$
[ \ldots,  {y_s}(H-4n), \,{c_0},\ldots]=\beta\,\big[ \ldots,
 c_0, {y_s}(H-4n),\ldots\big]+\alpha\,\big[ \ldots,
[{y_s}(H-4n),c_0],\ldots \big].
$$
 Additional summands  have the form
$$
\alpha\,\big[u_{-s}, c_0,\ldots, c_0,z_0,c_0,\ldots, c_0, \,{\hat{
b}}_s(H-4n), \,c_0,\ldots, c_0\big],
$$
and
$$
\alpha\, \big[{u_{-s}}, c_0,\ldots, c_0,z_0,c_0,\ldots, c_0, \,
{\hat{y}}_s(H-4n), \,c_0,\ldots,c_0\big].
$$
 respectively, where $\hat{b}_s(H-4n)=[{\hat{ b}}_s(H-4n), c_0]$ is a
quasirepresentative of level $ H-4n$ and
$\hat{y}_s(H-4n)=[{\hat{y}}_s(H-4n), c_0]$ is a quasicentralizer
of the same level $H-4n$. All the $c_0$ that remain on the right
of $\hat{y}_s(H-4n)$ and
 $\hat{b}_s(H-4n)$ are also transferred over these
 elements, which take over the roles of
 ${y}_s(H-4n)$ and ${b}_s(H-4n)$, respectively.

As a result of these transfers we obtain a linear combination of
products of the form
\begin{equation} \label{f6-l-11}
\big[[u_{-s}, c_0,\ldots, c_0,z_0,c_0,\ldots, c_0],\,
\hat{b}_s(H-4n)\big]
\end{equation}
and
\begin{equation} \label{f5-l-11}
\big[[u_{-s}, c_0,\ldots, c_0,z_0,c_0,\ldots, c_0,\ldots],\,
\hat{y}_s(H-4n)]
\end{equation}
 respectively, where in both cases there are at least
$ 4n-3$  elements $z_0$ on the left of $\hat{b}_s(H-4n)$ and
$\hat{y}_s(H-4n)$, while the number of
 elements $c_0$ is at most $ C_1+4n-4$.

Products (\ref{f6-l-11}) and (\ref{f5-l-11})  are subjected to
almost identical transformations. Namely, we apply Lemma
\ref{lbasic} to the indicated initial segments of the products
(\ref{f6-l-11}) and (\ref{f5-l-11}). The difference is that in the
case of
 (\ref{f6-l-11}) we choose for the numbers $n_1, n_2,
\ldots ,n_{4n-3}$ pairwise  distinct numbers $n_i$ satisfying the
inequalities
 $H-4n<n_i<H$, and in the
case of (\ref{f5-l-11})  we choose distinct numbers $n_i$
satisfying the inequalities
 $H-8n+1 <n_i<H-4n$. This application of Lemma~\ref{lbasic} is possible if the numbers
$W_i$ are chosen to be
 $\geq 2C_1+12n-11$.

As a result, the product (\ref{f6-l-11})  becomes equal to a
linear combination of product of the form
\begin{equation}\label{f7-l-11}[\ldots, \hat{x}_{k}(n_2),\hat{x}_{k}(n_1),\,
\hat{b}_s(H-4n)]
\end{equation} and
\begin{equation}\label{f8-l-11} [\ldots, \hat{x}_{-k}(n_2),\hat{x}_{-k}(n_1),\,
\hat{b}_s(H-4n)]\end{equation} in which on the left of $
\hat{b}_s(H-4n)$ there are  $2n-1$ in succession
$x$-quasi\-repre\-sen\-ta\-tives of pairwise distinct levels in
the interval
 $(H-4n,\, H)$ with one and the same index $
k$ or $-k$. The product (\ref{f5-l-11})  becomes equal to a linear
combination of products of the form
\begin{equation}\label{f9-l-11}
[\ldots, \hat{x}_{k}(n_2),\hat{x}_{k}(n_1),\, \hat{y}_s(H-4n)]
\end{equation}
and
 \begin{equation}\label{f10-l-11}
 [\ldots \hat{x}_{-k}(n_2),\hat{x}_{-k}(n_1),\,
\hat{y}_s(H-4n)]
\end{equation}
 in which on the left of $
\hat{y}_s(H-4n)$ there are $2n-1$ in succession
$x$-quasi\-repre\-sen\-ta\-tives of pairwise distinct levels in
the interval $(H-8n,\, H-4n)$ with one and the same index $ k$ or
$-k$. The lengths of the $x$-quasi\-repre\-sen\-ta\-tives in
(\ref{f7-l-11}), (\ref{f8-l-11}), (\ref{f9-l-11}) and
(\ref{f10-l-11}) do not exceed $2(C_1+4n-4)+4n-3=2C_1+12n-11$.

\vskip1ex First we prove part (a) of the lemma for products of the
form~(\ref{f7-l-11}). In each product (\ref{f7-l-11}) we start
moving the element $\hat{b}_s(H-4n)$ to the left. At the first
step, say, we get the sum
$$
\beta\,\big[\ldots,
\hat{x}_{k}(n_2),\,\hat{b}_s(H-4n),\,\hat{x}_{k}(n_1)\big]\,+\,\alpha\,
\big[\ldots, \hat{x}_{k}(n_2),
\,[\hat{x}_{k}(n_1),\,\,\hat{b}_s(H-4n)]\big].
$$
The last entry $
\hat{x}_{k}(n_1)$ of the first summand is an
$x$-quasi\-repre\-sen\-ta\-tive of level $n_1$
 and therefore also a centralizer of level $n_1-1$ by
Lemma~\ref{l-kv-cen} (since its length  is $\leq 2C_1+12n-11$ and
the differences $W_{n_1}-W_{n_1-1}$ can be chosen to be
 $\geq 2C_1+12n-12$). Since $ n_1-1>H-8n$, then by
 Lemma~\ref{l-product} the whole first summand has the form $[y_{-k}(H-8n
),\,y_k(H-8n )]$, which becomes the required form in part (a) with
$ t=k$ after freezing in the same level. In the second summand the
subproduct $ [\hat{x}_{k}(n_1),\, \hat{b}_s(H-4n)]$ takes over the
role of the
 element $ \hat{b}_s(H-4n)$ and is also moved to the left, over the $
\hat{x}_{k}(n_i)$, $i\geq 2$. By the same arguments after $j$
steps we obtain the
 sum of the product
 \begin{equation}\label{f11-l-11}
\alpha^j\, \big[\ldots, \hat{x}_{k}(n_{j+1}),
\big[\boldsymbol{\hat{x}}_{k}(n_{j}), \,[\ldots
[\boldsymbol{\hat{x}}_{k}(n_2),[ \boldsymbol{\hat{x}}_{k}(n_1),
\boldsymbol{\hat{b}}_s(H-4n)]]]\big]\big]
\end{equation}
and a linear combination of products of the form $[y_{-k}(H-8n
),\,y_k(H-8n )\big]$, which acquire the form required in part (a)
after freezing in the same
 level.

\vskip1ex

We choose the number of steps $j$ leading to (\ref{f11-l-11}) so
that $\overline{ s+jk}=(\overline{ s},\overline{ k})$. Such an
integer $j$ satisfying $0\leq j\leq n-1$  exists by virtue of the
following lemma from \cite{ma-khu}, which states also  certain
other facts necessary for what follows.

\vskip1ex

 Recall that $\overline{m}$ denotes the greatest common
divisor $(m,n)$. Clearly, $\overline{(m,l)}=(\overline{
m},\overline{ l})$ is the
 greatest common divisor
of three integers $n$, $m$, and $l$. Furthermore, $\overline{
m\cdot l}=\overline{ m\cdot \overline{ l}}$ for any integers $m$
and $l$. For a positive integer~$d$ we introduce the special
notation $(n\backslash d)$ for the maximal divisor of $n$ that is
coprime to~$d$. More precisely, if $\overline{ d}=p_1^{k_1}\ldots
p_l^{k_l}$ is the canonical decomposition of $\overline{ d}$ into
a product of non-trivial
 prime-powers and similarly
$n=p_1^{m_1}\ldots p_l^{m_l}p_{l+1}^{m_{l+1}}\ldots p_w^{m_w}$,
where $m_i\geq k_i$ for $i=1,\ldots ,l $, then by definition
$(n\backslash d)=p_{l+1}^{m_{l+1}}\ldots p_w^{m_w}$.

\begin{lemma}[{\cite[Lemma 12]{ma-khu}}]\label{l-12} For any positive integers $k$ and $s$

{\rm (a)} there exists an integer $j_0$ in the interval $0\leq
j_0\leq n-1$ such that $\overline{ s+j_0k}=\overline{ (s,
k)(n\backslash k')}$, where $ k'= k/(s, k);$

{\rm (b)} there exists an integer $j$ in the interval $0\leq j\leq
n-1$ such that $\overline{ s+jk}=(\overline{ s},\overline{ k});$

{\rm (c)} for any $i$ the number $(\overline{ s+ik}, \overline{
k})$ is equal to $ (\overline{ s},\overline{ k})$;

{\rm (d)} if $ (\overline{ r},\overline{ k})$ divides $
(\overline{ s},\overline{ k})$, then $\overline{ r}$ divides
$\overline{ (s, k)(n\backslash k')}$, where $ k'=k/(s, k)$.
\end{lemma}

Thus, we choose $j$ as in Lemma~\ref{l-12} (b). Then the
subproduct indicated in bold type in (\ref{f11-l-11})
$$
\big[\boldsymbol{\hat{x}}_{k}(n_{j}), \,\big[\ldots
[\boldsymbol{\hat{x}}_{k}(n_2),[ \boldsymbol{\hat{x}}_{k}(n_1),
\boldsymbol{\hat{b}}_s(H-4n)]]\big]\big]
$$
 becomes an
$x$-quasi\-repre\-sen\-ta\-tive of the form $\hat{x}_t(l)$ with
$t=s+jk$ such that
 $\overline{ t}=(\overline{ s},\overline{ k})$ of level
$l=\max\{ n_1,\ldots ,n_j\}$, since all the $n_i$ are distinct and
greater than $H-4n$. Since its
 length is at most $ 2C_1+2S_1+1$ and $ W_l-W_{l-1}$ can be chosen
 to be $\geq 2C_1+2S_1$,  this is also a centralizer of
the form $ y_t(l-1)$ by Lemma~\ref{l-kv-cen}. Then by
Lemma~\ref{l-product} the product (\ref{f11-l-11}) is equal to a
product of the form $[y_{-t}(H-8n ),\,y_t(H-8n )]$ with $
\overline{ t}=(\overline{ s},\overline{ k})$, which, obviously,
divides~$\overline{ k}$. Such a product acquires the form required
in part (a) after freezing in the same level. As a result, the
product (\ref{f7-l-11})
 is equal to a linear combination of products of the form required
in part~(a).

The product of the form (\ref{f8-l-11}) is subjected to the same
transformations as (\ref{f7-l-11}) with the only difference that
the elements $\hat{x}_{k}(n_i)$ are replaced by similar
 elements $\hat{x}_{-k}(n_i)$ and Lemma~\ref{l-12}(b) is applied to the numbers
$s$ and $n-k$. The resulting
 products have the form $[x_{-t}(H-8n ),\,x_t(H-8n
)]$ with $ \overline{ t}$ dividing~$\overline{n-k}$, which
satisfies the conclusion of part~(a),
since~$\overline{n-k}=\overline{k}$.

\vskip1ex

 We now prove part (b) for products~(\ref{f7-l-11}). In
each product (\ref{f7-l-11}) we begin moving the element $
\hat{b}_s(H-4n)$ to the left. After the first step, say, we obtain
the sum
$$
\beta\,\big[\ldots
\hat{x}_{k}(n_2),\,\,\hat{b}_s(H-4n),\,\,\hat{x}_{k}(n_1)\big]\,+\,\alpha\,
\big[\ldots \,\hat{x}_{k}(n_2),\,
[\hat{x}_{k}(n_1),\,\hat{b}_s(H-4n)]\big].$$ In the first summand
we continue moving the
 element $\hat{b}_s(H-4n)$ to the left over the
 elements $ \hat{x}_{k}(n_i)$. As a result, we obtain the
sum
\begin{equation}\label{f12-l-11}
\beta^{2n-1}\,\big[\ldots \,\hat{b}_s(H-4n),
\,\hat{x}_{k}(n_{2n-1}), \ldots,
\hat{x}_{k}(n_2),\,\hat{x}_{k}(n_1)\big]\;+\;\;\;\;\;\;\;\;
$$
$$\;\;\;\;\;\;\;+\;\alpha\beta^{l-1}\, \sum_{l=1}^{2n-1}\big[\ldots \, [\hat{x}_{k}(n_l),\,\,\hat{b}_s(H-4n)],
\,\hat{x}_{k}(n_{l-1}), \ldots,
\,\hat{x}_{k}(n_1)\big].\end{equation} The first summand is equal
to $0$ by Lemma~\ref{l2n}. Indeed, under all our transformations
the sum of indices remains the same, that is, equal to~$0$
modulo~$n$. Hence the sum of indices in the initial segment of the
first summand ending with
 $\hat{b}_s(H-4n)$ is
$-(2n-1)k$, which is divisible by $\overline{ k}$. The condition
on the length in Lemma~\ref{l2n} is also satisfied if the $W_i$
are chosen  to be $\geq 2C_1+2S_1+n-1$.

In each product under the sum in (\ref{f12-l-11}) we transfer the
subproduct \linebreak $ [\hat{x}_{k}(n_l),\,\, \hat{b}_s(H-4n)]$
to the right end of the product. Together with additional summands
arising by~(\ref{tozh}) this produces a linear combination of
products of the form
\begin{equation}\label{f13-l-11}
\big[\ldots
\boldsymbol{[}\hat{x}_{k}(n_{l_1}),\,\,\hat{b}_s(H-4n),
\,\hat{x}_{k}(n_{l_2}),\ldots,
\,\hat{x}_{k}(n_{l_j})\boldsymbol{]}\big],\;\;\;\;\;\;\;\;\;\;\;\;\;\;\;j\geq
1.
\end{equation}
The subproduct indicated in (\ref{f13-l-11}) is an
$x$-quasi\-repre\-sen\-ta\-tive of level $l=\max \{ n_{l_1},\ldots
n_{l_j}\}$, since all the $n_{l_i}$ are pairwise distinct and
greater
 than~$H-4n$. Since its length is $\leq 2C_1+2S_1+1$ and the number
$W_{l}-W_{l-1}$ can be chosen to be $\geq 2C_1+2S_1$, this is also
a centralizer of level $l-1$ by Lemma~\ref{l-kv-cen}. Hence the
whole product (\ref{f13-l-11}) has the form
\begin{equation}\label{f14-l-11}
[y_{-s-jk}(H-8n ),\,y_{s+jk}(H-8n )], \;\;\;\;\;\;\; {\rm where}
\;\,j\geq 1.
\end{equation}
 By Lemma~\ref{l-12}, $(\overline{s+jk},\,\overline{k})=
(\overline{s},\,\overline{k})$. Hence the product (\ref{f14-l-11})
has the form required in part (b) of Lemma \ref{l-contraction}
after freezing in the same level;  therefore the same is true also
for (\ref{f7-l-11}).

To prove part (b) for products of the form (\ref{f8-l-11}) we
subject them to exactly the same transformations as products of
the form (\ref{f7-l-11}) with the roles of
 elements $\hat{x}_{k}(n_i)$ taken over by
 elements $\hat{x}_{-k}(n_i)$. Lemma \ref{l-12}\,(c) is then applied to the
numbers $s$ and~$n-k$. The resulting
 products have the form $[x_{-r}(H-8n
),\,x_r(H-8n )]$ for $r$ such that $(\overline{
r},\overline{n-k})=(\overline{ r},\overline{k})$ divides
$(\overline{ s},\,\overline{n-k})= (\overline{ s},\,\overline{k})
$ and therefore satisfy part (b) of the lemma.

\vskip1ex

 We now consider products of the form (\ref{f9-l-11}) and
(\ref{f10-l-11}).
 We subject them to the same transformations as
products of the form
 (\ref{f7-l-11}) and (\ref{f8-l-11}), respectively, for proving both parts (a) and
(b) of Lemma~\ref{l-contraction} for them. In the products
emerging
 subproducts of the form
$$
\big[\hat{x}_{\pm k}(n_{l_1}),\,\,\hat{b}_s(H-4n), \,
\,\hat{x}_{\pm k}(n_{l_2}),\ldots, \,\hat{x}_{\pm k}(n_{l_j})\big]
$$
are replaced by subproducts of
the form
 $$
 \big[\hat{x}_{\pm k}(n_{l_1}),\,\,\hat{y}_s(H-4n), \, \,\hat{x}_{\pm
k}(n_{l_2}),\ldots, \,\hat{x}_{\pm k}(n_{l_j})\big]
$$
(the index
$\pm k$ is either $k$ in all places, or $-k$). For
 products (\ref{f9-l-11}) and (\ref{f10-l-11}) the levels $n_{l_j}$ were chosen
to satisfy the inequalities $n_{l_j}<H-4n$; hence these
subproducts
 are also quasicentralizers of level $H-4n$ (and of bounded
length) and therefore also centralizers of level $H-8n$ by
Lemma~\ref{l-kv-cen}. The indices in all the products will be
exactly the same as in the above arguments
 for products (\ref{f7-l-11}) and (\ref{f8-l-11}). Hence, by the same arguments
(with that adjustment for the levels), products (\ref{f9-l-11})
and (\ref{f10-l-11}) will be represented
 in the form required in part~(a), as well as in the form required in
part~(b) of Lemma~\ref{l-contraction}. \end{proof}

The following lemma is a consequence of Lemma
\ref{l-contraction}\,(a).

\begin{lemma}[{see  \cite[Lemma 13]{ma-khu}}]\label{l-13} Any $zc$-element of type $(s_i\ldots s_1k(H))$ of
level $H \geq 8in+1$ can be represented as a linear combination of
products of the form $[x_{-t}(H-8in ),\,x_t(H-8in )]$ with
$($possibly different\/$)$ $t$ such that $\overline{ t}$ divides
$\overline{ k}$. \end{lemma}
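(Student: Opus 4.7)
The plan is to prove this lemma by induction on the complexity $i$, using Lemma~\ref{l-contraction}(a) as both the base case and the engine of the inductive step, with Lemma~\ref{l-10} providing the mechanism to replace embedded $zc$-elements.

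For the base case $i=1$, the statement is exactly the conclusion of Lemma~\ref{l-contraction}(a): a $zc$-element of type $(s_1k(H))$ of level $H\geq 8n+1$ is a linear combination of products $[x_{-t}(H-8n),\,x_t(H-8n)]$ with $\overline{t}$ dividing $\overline{k}$.

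For the inductive step, suppose the statement holds for complexity $i-1$, and let $h$ be a $zc$-element of type $(s_i s_{i-1}\ldots s_1 k(H))$ with $H\geq 8in+1$. By construction, the embedded $zc$-elements in $h$ of complexity $i-1$ all have type $(s_{i-1}\ldots s_1 k(H))$ and the same level $H$; since $H\geq 8in+1 \geq 8(i-1)n+1$, the induction hypothesis applies and expresses each of them as a linear combination of products of the form $[x_{-t_j}(H-8(i-1)n),\,x_{t_j}(H-8(i-1)n)]$ with $\overline{t_j}$ dividing $\overline{k}$. Now apply Lemma~\ref{l-10} (with $i_0 = i-1$ and $T = H-8(i-1)n$): after substitution, $h$ becomes a linear combination of $zc$-elements of complexity $1$ and types $(s_i t_j(H-8(i-1)n))$, where each $\overline{t_j}$ divides $\overline{k}$.

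Finally, to each such complexity-$1$ $zc$-element of type $(s_i t_j(H-8(i-1)n))$ we apply Lemma~\ref{l-contraction}(a). Since $H-8(i-1)n \geq 8n+1$, the level hypothesis is satisfied, and we obtain a linear combination of products $[x_{-r}(H-8in),\,x_r(H-8in)]$ with $\overline{r}$ dividing $\overline{t_j}$. Because $\overline{t_j}$ divides $\overline{k}$, transitivity of divisibility gives $\overline{r}\mid\overline{k}$, which is the desired conclusion.

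The main obstacle, and really the only non-routine issue, is parameter bookkeeping: the application of Lemma~\ref{l-10} requires the gap $C_{i-1}-C_i$ and the ratio $S_i/S_{i-1}$ to absorb the additional $c_0$-occurrences and mismatched types produced by the substitution, and the application of Lemma~\ref{l-contraction}(a) at level $H-8(i-1)n$ requires the $W_l-W_{l-1}$ differences to remain large enough after $i-1$ rounds. Both are handled by the choices of $C_i$, $S_i$, $W_l$ prescribed in \S\ref{parametry}, but one must check that Lemma~\ref{l-10} and Lemma~\ref{l-contraction}(a) can indeed be composed this many times; the level-drop $8n$ per complexity unit is precisely what makes the hypothesis $H\geq 8in+1$ both necessary and sufficient.
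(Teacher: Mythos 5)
Your proposal is correct and follows essentially the same route as the paper: induction on the complexity $i$ with Lemma~\ref{l-contraction}(a) as the base case, then using the induction hypothesis on the embedded $zc$-elements of complexity $i-1$, Lemma~\ref{l-10} to pass to $zc$-elements of types $(s_i t_j(H-8(i-1)n))$, and a final application of Lemma~\ref{l-contraction}(a) together with transitivity of divisibility. The level arithmetic ($H-8(i-1)n\geq 8n+1$) and the parameter conditions you flag are exactly those invoked in the paper.
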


\begin{proof} Induction on $i$. For $i=1$ this follows from Lemma \ref{l-contraction}(a).

For $i>1$ in a $zc$-element $h$ of the type $(s_i\ldots s_1k(H))$
$$
[\boldsymbol{u}_{-s_i},\,[c_0,\,\ldots, c_0,\,z_0,\,c_0,\,\ldots,
c_0, \boldsymbol{a}_{s_i}, c_0,\ldots, c_0,\,z_0,\, c_0,\ldots,
c_0]]
$$
the $z_0$ are
(possibly different) $zc$-elements of type $(s_{i-1}\ldots
s_1k(H))$ of level $H$ and their number is~$S_i$. By the induction
hypothesis
 each of the $z_0$ is a linear
combination of products of the form $[x_{-t_j}(H-8(i-1)n
),\,x_{t_j}(H-8(i-1)n )]$ for generally speaking different
 $ t_j$ but such that
 $\overline{ t}_j$ divides~$\overline{ k}$. By Lemma \ref{l-10}
 the $zc$-element $h$ is equal to a linear combination of
$zc$-elements of the types $(s_it_j(H-8(i-1)n ) )$ for the same
numbers~$t_j$.

 By Lemma~\ref{l-contraction}\,(a) each of these $zc$-elements is equal to
 a linear combination of products of the form $[x_{-t}(H-8in
),\,x_t(H-8in )]$ for (various) $t$ such that $\overline{ t}$
divides $\overline{ t}_{j}$ and therefore divides $\overline{ k}$.
\end{proof}

\section{ Completion of the proof of Theorem \ref{th1}}

In this section we prove Theorem \ref{th1}.  The particular case
of $m=0$ follows from Proposition \ref{p1}: there exist a function
$f(n)$ such that $L^{(f(n))}\leq
\sum_{t=0}^{m}[L_0^t,L,L_0^{m-t}]=0$ and therefore $L$ is soluble
of $n$-bounded derived length.

\vskip1ex

 To prove Theorem~\ref{th1} in the general case it is
sufficient to show that
 $Z\langle Q+1 \rangle=0$ for some $n$-bounded number~$Q$. Then by Proposition~\ref{p1} the
algebra $Z\langle Q\rangle$ is soluble of $n$-bounded derived
length, since the number $T_{Q}$ is $n$-bounded. Then by
Proposition~\ref{p1} the algebra $Z\langle Q-1\rangle $ is soluble
of $n$-bounded derived length, since the number~$ T_{Q-1}$ is
$n$-bounded, and so on, up to the solubility of $n$-bounded
derived length of the ideal~$Z\langle 1\rangle=Z$. By
Lemma~\ref{l-9} it is sufficient to prove that for
 large enough
 $n$-bounded $Q$ and for large enough
  $ n$-bounded $N$ every $zc$-element of type
$(s_Q\ldots s_1k(N-2))$ is equal to~$0$ for any non-zero
$s_Q,\ldots ,s_1,k$. In order to use induction on $\overline{ k}$
it is convenient to re-formulate this statement
 in the form
 of the following proposition.

 \vskip1ex

Let $n=p_1^{n_1}\ldots p_w^{n_w}\geq 2$ be the canonical
factorization of~$n$
 into a product of non-trivial prime-powers and $k\in\{1,\ldots, n-1\}$
 such that  $\overline{ k}=p_1^{m_1}\ldots p_w^{m_w}$, where $0\leq m_j\leq
 n_j$ for all $j=1,\ldots, w$.
 In what follows we fixe $$H(\overline{ k})\,=\,4n-3\,\,+ \,\,8n(2n-3)
\sum\limits_{i=1}^{w}m_i,$$
$$ Q(\overline{ k})\,=\,1\,\,+\,\,
(2n-3) \sum\limits_{i=1}^{w}m_i$$ and $$N= H(n)+2.$$

\begin{proposition}[{see  \cite[Proposition 2]{ma-khu}}]\label{p2}
 For $ Q\geq
Q(\overline{ k})$ any $zc$-element of type $(s_Q\ldots s_1k(H))$
of level $H\geq H(\overline{ k})$ is equal to\/~$0$ for any
non-zero~$s_Q,\ldots ,s_1,k$. \end{proposition}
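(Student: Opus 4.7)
The plan is to prove Proposition~\ref{p2} by induction on $\Omega(\overline{k}) = \sum_{i=1}^{w} m_i$, the number of prime factors of $\overline{k}$ counted with multiplicity, adapting to the Lie-type setting the scheme of \cite[Proposition~2]{ma-khu}.

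For the base case $\Omega(\overline{k}) = 0$, i.e.\ $\overline{k} = 1$, the residue $\overline{k}$ divides every non-zero $s_1$ trivially, so Corollary~\ref{c-l-7} applies directly to the innermost complexity-$1$ subproduct $[u_{-s_1}, [\ldots z_0, c_0, \ldots, a_{s_1}, \ldots, z_0, \ldots]]$ of $h$, whose $z_0$'s are the products $[x_{-k}(H), x_k(H)]$. With $H \geq H(1) = 4n - 3$, with $S_1 \geq 8n - 7$ copies of $z_0$ at hand, and with at most $C_1 \leq (W_1 - 5n + 5)/2$ occurrences of $c_0$ (all guaranteed by the parameter choice in \S\,\ref{parametry}), the corollary forces this inner bracket to vanish, whence $h = 0$.

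For the inductive step, assume the result for every $\overline{k}'$ with $\Omega(\overline{k}') < \Omega(\overline{k})$ and split into two cases. If $\overline{k} \mid s_1$, Corollary~\ref{c-l-7} applies exactly as in the base case. If instead $\overline{k} \nmid s_1$, so that $(\overline{s_1}, \overline{k})$ is a proper divisor of $\overline{k}$, apply Lemma~\ref{l-contraction}(b) to each innermost complexity-$1$ $zc$-element of type $(s_1 k(H))$, rewriting it as a linear combination of complexity-$0$ products $[x_{-r}(H - 8n), x_r(H - 8n)]$ with $(\overline{r}, \overline{k}) \mid (\overline{s_1}, \overline{k})$; by Lemma~\ref{l-12}(d), each such $\overline{r}$ divides $\overline{(s_1, k)(n \backslash k')}$ where $k' = k/(s_1, k)$. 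Lemma~\ref{l-10} then lifts these substitutions to all of $h$, yielding a linear combination of $zc$-elements of type $(s_Q \ldots s_2\, r(H - 8n))$ of complexity $Q - 1$ at level $H - 8n$, to which the induction hypothesis is meant to apply.

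The main obstacle is that a single instance of Case~B need not decrease $\Omega$: the factor $(n \backslash k')$ can contribute prime factors of $n$ lying outside $\pi(\overline{k})$, so $\overline{r}$ may pick up new primes even as it sheds some of $\overline{k}$'s. This is precisely the purpose of the margins $Q(\overline{k}) = 1 + (2n-3)\Omega(\overline{k})$ and $H(\overline{k}) = 4n-3 + 8n(2n-3)\Omega(\overline{k})$: they reserve $(2n-3)$ consecutive Case~B reductions (each costing one unit of complexity and $8n$ units of level) per prime factor originally present in $\overline{k}$. The combinatorial book\-keeping, paralleling Lemma~12 of \cite{ma-khu}, tracks the evolution of the intersection $\pi(\overline{k}) \cap \pi(\overline{s_j})$ across the iterated reductions and verifies that each block of at most $(2n-3)$ consecutive Case~B steps permanently extinguishes at least one prime of the original $\pi(\overline{k})$. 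After $\Omega(\overline{k})$ such strict decreases, the innermost residue reaches $\overline{k}' = 1$, and the base case closes the induction.
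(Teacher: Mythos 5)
Your base case ($\overline{k}=1$) and the case $\overline{k}\mid s_1$ are fine, and your use of Lemma~\ref{l-contraction}(b) followed by Lemma~\ref{l-10} is structurally correct; but the inductive step has a genuine gap, which you have in fact flagged yourself. Lemma~\ref{l-contraction}(b) controls only $(\overline{r},\overline{k})$, not $\overline{r}$ itself, so after relabelling the innermost residue the new parameter $\overline{r}$ can satisfy $\Omega(\overline{r})>\Omega(\overline{k})$: your induction quantity is not monotone under the very reduction you perform, and the induction hypothesis cannot be applied to the types $(s_Q\ldots s_2\,r(H-8n))$ you produce. The assertion that ``each block of at most $(2n-3)$ consecutive Case~B steps permanently extinguishes at least one prime of the original $\pi(\overline{k})$'' is precisely the missing content: it does not follow from Lemma~\ref{l-12}, which is pure residue arithmetic, and no bookkeeping of $\pi(\overline{k})\cap\pi(\overline{s_j})$ along iterated applications of part~(b) alone will produce such a statement --- nothing prevents the gcd with $n$ from oscillating indefinitely. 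Moreover, the margins $Q(\overline{k})=1+(2n-3)\Omega(\overline{k})$ and $H(\overline{k})=4n-3+8n(2n-3)\Omega(\overline{k})$ are not designed to pay for blocks of consecutive part~(b) reductions.

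The paper makes progress by a different mechanism, which your proposal does not contain. Since $Q(\overline{k})\geq n$, the pigeonhole principle gives two equal residues $s_{i_1}=s_{i_2}$ in the type; Lemma~\ref{l-13} and Lemma~\ref{l-10} (plus the induction hypothesis for the terms where the residue already drops) reduce everything to types $(s_F\ldots s_1k(Y))$ with $s_j=s_1$, $j\leq n$. The crux is then Lemma~\ref{l-14}: inside a single $zc$-element of type $(s_j\ldots s_1k(Y))$ the embedded elements of type $(s_{j-1}\ldots s_1k(Y))$ are expanded partly in the ``modular'' form coming from part~(a) (pair indices $t$ with $\overline{t}\mid\overline{k}$) and partly in the ``unmodular'' form coming from part~(b) (pair indices $r$ with $(\overline{r},\overline{k})\mid(\overline{s_1},\overline{k})$); using $s_j=s_1$ together with Lemma~\ref{l-12}(a) one arranges an initial segment of index $q$ with $\overline{q}=\overline{(s_1,k)(n\backslash k')}$, and Lemma~\ref{l-12}(d) plus Lemma~\ref{l-7} then annihilate every term in which the extracted index $t_2$ has $\overline{t_2}=\overline{k}$. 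It is this annihilation that forces the surviving innermost residues to be proper divisors of $\overline{k}$, after which the induction on $\overline{k}$ closes, the margins $H(\overline{k})-H(\overline{t_2})\geq 8n(2n-3)$ and $Q(\overline{k})-Q(\overline{t_2})\geq 2n-3$ covering the at most $2n-3$ losses of complexity and $8n(2n-3)$ of level incurred in one such strict decrease. Without an argument of this kind --- playing the two parts of Lemma~\ref{l-contraction} against each other on a product containing two equal residues --- your induction does not close.
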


Note that in view of the ``embedded'' nature of the definition of
$zc$-elements
 in
Proposition \ref{p2} it suffices to prove
 the required equality to $0$ for $
Q=Q(\overline{ k})$ and $H=H(\overline{ k})$.

\begin{proof}
We use induction on~$\overline{ k}$.
 Suppose that $ \overline{ k}=1$. Any $zc$-element of
type $(sk(4n-3))$ is a product of the form
$$
\big[\boldsymbol{u_{-s_i}},\,[\ldots\,z_0,\,c_0,\,\ldots,
c_0,\ldots,\boldsymbol{a_{s_i}},\ldots,c_0,\ldots,
c_0,\,z_0,\,\ldots ]\,\big],
$$
where the $z_0=\big[x_{-k}(4n-4),\,x_k(4n-3)\big]$ are (possibly
different) $zc$-elements of complexity~0, the number of the $z_0$
is $S_1$,  the total number of the $c_0$ is at most $C_1$. If
$S_1$ is chosen to be at least $8n-7$, and  $W_1\geq 2C_1+5n-5$,
then by Corollary~\ref{c-l-7} for $\overline{ k}=1$ the product is
equal to~$0$, since $\overline{ k}=1$ divides $s$ for any~$s$.
Hence Proposition~\ref{p2} holds for this particular case.

Now suppose that $\overline{ k}>1$. To lighten the notation we
temporary note $Q=Q(\overline{k})$, $H=H(\overline{k})$. Since the
parameters
 $s_j$
in the type $(s_Q\ldots s_1k(H))$
 are non-zero
 residues modulo~$n$ and $Q(\overline{ k})\geq 1+ 2n-3\geq n$ (for $n\geq 2$), then among $s_Q,\ldots
,s_1$ there are at least two equal:
\begin{equation}\label{f1-p2}
s_{i_1}=s_{i_2},\;\;\;\;\;{\rm where}\;\
{i_1}<{i_2}\leq n.
\end{equation}
 Then it suffices to show that a $zc$-element $h$ of type
 $(s_Q\ldots
s_1k(H))$, where $s_{i_1}=s_{i_2}$, is equal to~$0$.

 The element $
h$ has ``embedded'' structure according to the inductive
construction, at the $i_1$st step of which there are subproducts
$z_0$ that are $zc$-elements of complexity $i_1-1$ of the type
$(s_{i_1-1}\ldots s_1k(H))$. Since $i_1\leq n-1$ and therefore
$H\geq 4n-3+8n(2n-3)\geq 1+8n(i_1-1)$, by Lemma~\ref{l-13} all
these $zc$-elements of type $(s_{i_1-1}\ldots s_1k(X))$ are equal
to linear combinations of products
$$
\big[x_{-t}\big(H-8(i_1-1)n\big),
\,x_{t}\big(H-8(i_1-1)n\big)\big]\;\;\;\;\;\;\;\; {\rm
for\;(various)}\;\,t\;\,{\rm such \;that}\; \overline{ t}\,\;{\rm
divides}\,\;\overline{ k}.
$$
By Lemma~\ref{l-10} the
$zc$-element $h$ is equal to a linear combination of $zc$-elements
of the types
\begin{equation}\label{f2-p2}
\big(s_Q\ldots s_{i_1}t(H-8(i_1-1)n )\big)\;\;\;\;\;\;\;\; {\rm
for\;(various)}\;\,t\;\,{\rm such\;that }\; \overline{ t}\,\;{\rm
divides}\,\;\overline{ k}.
\end{equation}
If $\overline{t}<\overline{k}$ and  $\overline{t }$ divides
$\overline{k}$, then $H(\overline{k})-H(\overline{t})\geq
8n(2n-3)$ and $Q(\overline{k})-Q(\overline{t})\geq 2n-3$. It
follows that $Q-i_1+1\geq Q(\overline{ t})$ and
 $H-8(i_1-1)n \geq H(\overline{ t})$ for all $i_1\leq
 n-1$ and $\overline{ t}<\overline{ k}$ such that $\overline{t }$ divides
$\overline{k}$. Therefore by the induction hypothesis $zc$-element
of type (\ref{f2-p2})  is equal to $0$ if $\overline{t}<\overline{
k}$. Hence it is sufficient to prove that $zc$-elements of types
(\ref{f2-p2}) are equal to $0$ in the case where $\overline{
t}=\overline{ k}$. To lighten the notation we re-denote $t$ again
by~$k$. We also denote $Y=H-8(i_1-1)n$, $F=Q-i_1+1$ and
 change notation for the residues in the type, so that
$s_{i_1}$ becomes $s_1$, and $ s_{i_2}$ equal to $s_{i_1}$
becomes, say,~$s_j$. Thus, it suffices to prove that $zc$-elements
$h$ of type
\begin{equation}\label{f3-p2}(s_{F}\ldots s_{1}k(Y))\end{equation} are equal to~$0$
 if
$$s_{j}=s_{1}\;\;\;\;\;\;{\rm for }\,\;j\leq n.$$

Let $z$ be a $zc$-element of the type $(s_{1}k(Y))$. It is easy to
verify that  $Y=H-8(i_1-1)n\geq 8n+1$ for $i_1\leq n-1$. By Lemma
\ref{l-contraction}\,(a) applied to $z$
 we obtain an expression of
$z$ as a linear combination of products of the form
 $$[x_{-t}(Y-8n ),\,\, x_{t}(Y-8n )] \;\;\;\;\;\;\; {\rm
for\;(various)}\;\,t\;\,{\rm such\;that }\; \overline{ t}\,\;{\rm
divides}\,\;\overline{ k}.$$ On the other hand, by
Lemma~\ref{l-contraction}\,(b), $z$ is equal to a linear
combination of products of the form
$$[x_{-r}(Y-8n ),\, x_{r}(Y-8n )] \;\;\;\;\;\;\;{\rm for
\;(various)}\;\, r\;\,{\rm such\;that}\;\,(\overline{
r},\overline{ k})\;\,{\rm divides}\;\,\,(\overline{
s}_{1},\overline{ k}).$$ Hence by Lemma \ref{l-10} we obtain that
any $zc$-element $a$ of the type $(s_{j-1}\ldots s_{1}k(Y))$ is
equal,
 on the one hand,
to a linear combination of $zc$-elements of the types
\begin{equation}\label{f4-p2}
(s_{j-1}\ldots s_{2}t(Y-8n ))\;\;\;\;\;\; {\rm
for\;(various)}\;\,t\;\,{\rm such\;that }\; \overline{ t}\,\;{\rm
divides}\,\;\overline{ k},
\end{equation}
and, on the other hand,
to a linear combination of $zc$-elements of the types
\begin{equation}\label{f5-p2}
(s_{j-1}\ldots s_{2}r(Y-8n )) \;\;\;\;\;\;\;{\rm
for\;(various)}\;\, r\;\,{\rm such\;that }\;\,(\overline{
r},\overline{ k})\;\,{\rm divides}\;\,(\overline{
s}_{1},\overline{ k}).
\end{equation}
Since $j \leq n$,  the level
$Y-8n$ is at least
 $8(j-2)n+1$. Hence we can apply Lemma~\ref{l-13} to each summand of linear
combinations of $zc$-elements of types (\ref{f4-p2}) and
(\ref{f5-p2}).
 As a result, any $zc$-element $a$ of the type $(s_{j-1}\ldots
s_{1}k(Y))$ can be represented, on the one hand, as a linear
combination of products of the ``modular'' form
\begin{equation}\label{f6-p2}
[x_{-t_1}(Y-8(j-1)n ),\,\, x_{t_1}(Y-8(j-1)n )] \;\;\;\;\;\;\;
{\rm for \;(various)}\;\,t_1\;\,{\rm such\;that }\; \overline{
t}_1\,\;{\rm divides}\,\;\overline{ k}.
\end{equation}
(Clearly, if $\overline{ t}_1$ divides $ \overline{ t}$ which
divides $\overline{ k}$, then $\overline{ t}_1$ also
divides~$\overline{ k}$.) On the other hand, such an element $a$
is equal to a linear combination of products of the ``unmodular''
form
\begin{equation}\label{f7-p2}
\begin{array}{cc}[x_{-r_1}(Y-8(j-1)n
),\,\, x_{r_1}(Y-8(j-1)n )] \vspace{1.5ex}\\ {\rm
for\;(various)}\;\, r_1\;\,{\rm such\;that }\;\,(\overline{
r}_1,\overline{ k})\;\,{\rm divides}\;\,\,(\overline{
s}_{1},\overline{ k}).\end{array}
\end{equation}
(If $\overline{ r}_1$ divides $r$ in (\ref{f5-p2}), for which
$(\overline{ r},\overline{ k})$ divides $(\overline{
s}_{1},\overline{ k})$, then $(\overline{ r}_1,\overline{ k})$
also divides $(\overline{ s}_{1},\overline{ k})$.)

 We now consider
an arbitrary $zc$-element $b$ of the type $(s_{j}\ldots
s_{1}k(Y))$. By definition,
\begin{equation}\label{f8-p2}
b=\big[\boldsymbol{u_{-s_{j}}},[\,c_0,\ldots, c_0,a, c_0,\ldots,
c_0,\,\boldsymbol{a_{s_ {j} }},c_0,\ldots, c_0,a,c_0,\ldots,
c_0]\big],
\end{equation}
where the $a$ are (possibly different) $zc$-elements of the type
$(s_{j-1}\ldots s_{1}k(Y))$ and their number is $S_j$, while the
number of $c_0$-occur\-rences is at most $C_j$. We suppose that
$S_j$ is sufficiently large. In the subproduct
$$
[c_0,\ldots, c_0,a,c_0,\ldots, c_0,\boldsymbol{a_{s_ {j}
}},c_0,\ldots, c_0,a,c_0,\ldots, c_0]
$$
we represent $A=2(4n-3)(n-1)-1$  first (from the left) elements
$a$
 as linear combinations of products of the form (\ref{f6-p2}). We obtain a linear combination of products of the form
\begin{equation}\label{f9-p2}
\begin{array}{l}
\big[c_0,\ldots,c_0,[x_{-t_1}(Y-8(j-1)n ), x_{t_1}(Y-8(j-1)n)], c_0,\ldots,c_0,\boldsymbol{a_{s_ {j} }},c_0,\ldots,\\
\ldots, c_0,[x_{-t_A}(Y-8(j-1)n), x_{t_A}(Y-8(j-1)n)]
,c_0,\ldots,c_0,\boldsymbol{a},c_0,\ldots,
\boldsymbol{a},\ldots\big],
\end{array}
\end{equation}
 where there are
sufficiently many, $S_j-A$, ``unused'' occurrences of the elements
$a$ and all the indices $t_i$ are such that $ \overline{ t}_i$
divides~$\overline{ k}$.
 In each
product~(\ref{f9-p2}) there are either $4n-3$ subproducts of the
form $\big[x_{-t_{i_0}}(Y-8(j-1)n ),\,\, x_{t_{i_0}}(Y-8(j-1)n
)\big]$ with one and the same pair of indices~$\pm t_{i_0}$ to the
right of $\boldsymbol{a_{s_ {j} }}$ or $4n-3$ such subproducts to
the left of $\boldsymbol{a_{s_ {j} }}$. In the case where there
are at least $4n-3$ such subproducts to the left of
$\boldsymbol{a_{s_ {j} }}$  we freeze  the others together with
subproducts $\big[x_{-t_i}(Y-8(j-1)n ),\, x_{t_i}(Y-8(j-1)n )\big]
$ with all other indices $t_i\ne t_{i_0}$ in level $0$ thus adding
them to $c_0$-occur\-rences.  By Lemma~\ref{l-7} applied to the
initial segment, all  the summands (\ref{f9-p2}) of this type is
trivial. (The condition on the level $Y-8(j-1)n\geq 4n-3$ holds
and  the numbers $ S_i$ and $C_i$ can be chosen such that
$C_j+S_j-A\leq (W_1-5n+5)/2$.)

If there are $4n-3$ subproducts of the form
$\big[x_{-t_{i_0}}(Y-8(j-1)n ),\,\, x_{t_{i_0}}(Y-8(j-1)n )\big]$
with one and the same pair of indices~$\pm t_{i_0}$ to the right
of $\boldsymbol{a_{s_ {j} }}$ we choose exactly $4n-3$ such
subproducts, freeze the others together with such subproducts to
the left of $\boldsymbol{a_{s_ {j} }}$ and subproducts
$\big[x_{-t_i}(Y-8(j-1)n ),\, x_{t_i}(Y-8(j-1)n )\big]$ with all
other indices $t_i\ne t_{i_0}$ in level $0$ thus adding them to
$c_0$-occur\-rences. Re-denoting $t_2=t_{i_0}$ and the initial
segment again by $a_{s_{j}}$  we obtain a product of the form
\begin{equation}\label{f10-p2}\begin{array}{c}\big[a_{s_{j}},\,c_0,\ldots,
c_0,\,\, [x_{-t_2}(Y-8(j-1)n ),\, x_{t_2}(Y-8(j-1)n )],
\,\,c_0,\ldots,
\;\;\;\;\;\;\;\;\;\;\;\;\;\;\;\;\;\;\;\;\;\\
 \\
\;\;\;\;\;\;\;\ldots c_0,\,\, [x_{-t_2}(Y-8(j-1)n ),\,\,
x_{t_2}(Y-8(j-1)n )],\, \,c_0,\ldots,  c_0,\,\,a,\,\,c_0,\ldots
\big],\end{array}\end{equation} in which there are $4n-3$
subproducts $ \big[x_{-t_2}(Y-8(j-1)n ),\, x_{t_2}(Y-8(j-1)n
)\big]$ with the same indices $ \pm t_2$ such that
$\overline{t_2}$ divides $\overline{k}$, the number of
$c_0$-occur\-rences is at most $C_j+S_j$, and, recall, there are $
S_j-A$ unused occurrences of elements~$a$.

\vskip1ex

 The core of the proof is to show that
 if $\overline{ t}_2=\overline{ k}$, then the product (\ref{f10-p2}) is equal
to~$0$. If, however, $\overline{ t}_2<\overline{ k}$, then we
shall be able to apply the induction hypothesis to those
$zc$-elements $h$ of type~(\ref{f3-p2}), where such subproducts
are embedded.

\begin{lemma} [{see  \cite[Lemma 14]{ma-khu}}] \label{l-14} If $\overline{ t}_2=\overline{ k}$, then the product $(\ref{f10-p2})$
is equal to~$0$. \end{lemma}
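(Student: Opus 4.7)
The plan is to eliminate the product (\ref{f10-p2}) by pitting the two possible expansions of the remaining $S_j - A$ ``unused'' $zc$-elements $a$ of type $(s_{j-1}\ldots s_1 k(Y))$ against the special structure $s_j = s_1$, combined with the key arithmetic of Lemma~\ref{l-12}. Note first that if $\overline{k}$ happens to divide $s_j$, the conclusion is immediate from Lemma~\ref{l-7} applied to the $4n-3$ subproducts $[x_{-t_2},\,x_{t_2}]$ (since $\overline{t_2}=\overline{k}$ divides $s_j$, and the $c_0$-count $C_j+S_j$ is below $(W_1-5n+5)/2$ with a sufficiently large $W_1$). So we may assume $\overline{k}\nmid s_j$.

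In this remaining case we expand each unused $a$ in the \emph{unmodular} form~(\ref{f7-p2}), obtaining products $[x_{-r_1}(Y-8(j-1)n),\,x_{r_1}(Y-8(j-1)n)]$ with $(\overline{r_1},\overline{k})$ dividing $(\overline{s_1},\overline{k})=(\overline{s_j},\overline{k})$. If $S_j-A$ is chosen large enough compared with $n$, by the pigeonhole principle each resulting product contains a sufficient number (at least $8n-7$) of pairs $[x_{-r_1},\,x_{r_1}]$ with one and the same pair of indices $\pm r_1$. We now transfer $a_{s_j}$ rightward across the $c_0$'s and across the $t_2$-pairs via~(\ref{tozh}), just as in the proof of Lemma~\ref{l-contraction}. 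Each transfer generates either a product in which $a_{s_j}$ has simply moved right, or a summand in which $a_{s_j}$ has collided with some neighbouring $x$-representative, producing a subproduct that is an $x$-quasirepresentative of bounded length and hence, by Lemma~\ref{l-kv-cen}, a centralizer of level only slightly below the current level.

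After $j_0$ such collisions with $[x_{-t_2},\,x_{t_2}]$-pairs, where $j_0\le n-1$ is chosen via Lemma~\ref{l-12}(b) so that $\overline{s_j+j_0t_2}=(\overline{s_j},\overline{t_2})=(\overline{s_j},\overline{k})$, the collected subproduct becomes a centralizer of index $s_j+j_0t_2$; Lemma~\ref{l-product} then converts the outer factor of the entire product into the form $[y_{-(s_j+j_0t_2)},\,y_{s_j+j_0t_2}]$, and after freezing in a suitable level via Lemma~\ref{l6} this is a $zc$-element of complexity~$0$ at an index $s_j+j_0t_2$ with $\overline{s_j+j_0t_2}=(\overline{s_j},\overline{k})$, strictly smaller than $\overline{k}$ under our assumption $\overline{k}\nmid s_j$. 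Attached to the still-present $[x_{-r_1},\,x_{r_1}]$-pairs at the end, the whole expression now realises a $zc$-element of type $(\ast\,(H'))$ with innermost residue of reduced greatest-common-divisor $(\overline{s_j},\overline{k})<\overline{k}$, and is therefore zero by the outer induction hypothesis of Proposition~\ref{p2}. The ``main'' (non-colliding) summand of the transfer, in which $a_{s_j}$ has been moved past all $4n-3$ $t_2$-pairs without absorbing any of them, is killed directly by Lemma~\ref{l2n} applied to the block of $2n-1$ successive $x$-quasirepresentatives with one and the same index~$\pm t_2$, using that the sum of the indices in the corresponding initial segment is $-(2n-1)t_2$, hence divisible by $\overline{t_2}=\overline{k}$.

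The main obstacle is purely combinatorial bookkeeping. At each transfer one must verify that the total number of $c_0$-occurrences accumulated by freezing remains below $(W_1-5n+5)/2$ so that the inductive and direct applications of Lemmas~\ref{l-7}, \ref{l2n} and Corollary~\ref{c-l-7} are legitimate; simultaneously the lengths of the created $x$-quasirepresentatives must remain $\le W_{\ell}-W_{\ell-1}$ for the relevant levels~$\ell$ in order that Lemma~\ref{l-kv-cen} be applicable. The increasing sequences $W_\ell$, $S_i$, $C_i$ and $T_i$ fixed in the parameter-choice scheme of~\S\,\ref{parametry} are precisely calibrated to accommodate this bookkeeping for all $j\le n$ and all the collisions performed in the proof.
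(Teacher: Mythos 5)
The proposal diverges from the paper's argument at several points, and the divergences are not mere restylings: they introduce genuine gaps.

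First, you propose transferring $a_{s_j}$ (the left-hand factor of the product~(\ref{f10-p2})) rightward via~(\ref{tozh}) and claim that each collision ``with some neighbouring $x$-representative'' produces ``a subproduct that is an $x$-quasirepresentative of bounded length and hence, by Lemma~\ref{l-kv-cen}, a centralizer.'' This is false: $a_{s_j}$ in~(\ref{f10-p2}) is the re-denoted \emph{initial segment}, which contains the arbitrary element $u_{-s_j}$, $c_0$'s and leftover modular-form pieces; it is not a representative. A product of the form $[a_{s_j},\,x_{\pm t_2}(\cdot)]$ is therefore neither an $x$-quasirepresentative nor a quasicentralizer, and Lemma~\ref{l-kv-cen} does not apply. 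In the paper the object transferred rightward is each $\hat{x}_{\pm t_2}(l_i)$ (after first applying Lemma~\ref{lbasic}), precisely because $[\hat{x}_{\pm t_2}(l_i),\,c_0]$ \emph{is} an $x$-quasirepresentative; your collisions are of the wrong kind.

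Second, you invoke Lemma~\ref{l-12}(b) so that $\overline{s_j+j_0t_2}=(\overline{s_j},\overline{k})$, and then claim the resulting object is a $zc$-element with a strictly smaller innermost gcd, ``therefore zero by the outer induction hypothesis of Proposition~\ref{p2}.'' But what you build after these transfers is not a $zc$-element of the required type $(s_Q\ldots s_1k'(H'))$ at all — the inductive embedded structure of $zc$-elements is destroyed — nor do you check that the complexity and level parameters satisfy $Q\geq Q(\overline{k'})$ and $H'\geq H(\overline{k'})$. The paper is careful \emph{not} to invoke Proposition~\ref{p2}'s induction inside Lemma~\ref{l-14}: it proves the product is zero directly via Lemmas~\ref{l-7} and~\ref{l2n}, and only afterwards (in the completion of Proposition~\ref{p2}) is the induction hypothesis applied, to a genuine $zc$-element reconstituted via Lemma~\ref{l-101}.

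Third, and this is the arithmetic heart of the matter: the paper uses Lemma~\ref{l-12}(a), not (b), to arrange $\overline{q}=\overline{(s_1,k)(n\backslash k')}$ for the initial segment $u_q$ of~(\ref{f11-p2}). This specific value is what makes Lemma~\ref{l-12}(d) applicable after the unmodular expansion~(\ref{f7-p2}): from $(\overline{r}_2,\overline{k})\mid(\overline{s}_1,\overline{k})$ one concludes $\overline{r}_2\mid\overline{(s_1,k)(n\backslash k')}=\overline{q}$, which is exactly the divisibility Lemma~\ref{l-7} requires. Replacing (a) by (b) gives $\overline{q}=(\overline{s_1},\overline{k})$, which does \emph{not} follow from $(\overline{r}_2,\overline{k})\mid(\overline{s}_1,\overline{k})$ alone (for instance, $r_2$ coprime to $k$ satisfies the hypothesis trivially but need not divide $(\overline{s_1},\overline{k})$). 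You never use the unmodular divisibility condition at all, so the content of (d) is silently dropped.

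Finally, you omit the role of the auxiliary Lemma~\ref{l-15}. During the collecting process the $a$-occurrences can be absorbed; Lemma~\ref{l-15} (via the modular form~(\ref{f6-p2}) plus Lemma~\ref{l-7}) bounds this loss by $(n-1)(4n-3)(n-1)$, ensuring enough unused $a$'s survive for the final pigeonhole and unmodular step. Without this control the claim that ``a sufficient number (at least $8n-7$)'' of matching $\pm r_1$ pairs remains is unjustified. The parameter bookkeeping you delegate to \S\,\ref{parametry} does not absorb this gap, since the quantity to be bounded depends on the transfer process you describe differently from the paper.
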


\begin{proof} We apply Lemma~\ref{lbasic} to an initial segment of the
product~(\ref{f10-p2}). This is possible, since $W_i$ can be
chosen  to be $ \geq 2C_j+2S_j+4n-3$, while the level $Y-8(j-1)n$
is at least $4n-3$ by definition (since $j \leq n$). As a result
we obtain a linear combination of products of the form
$$\big[v_{e},\,\,\hat{x}_{ t_2}(l_1),\,\, \hat{x}_{ t_2}(l_2),\ldots,
\hat{x}_{ t_2}(l_{2n-1}),\,\, c_0,\ldots c_0,\,a,\, c_0,\ldots,
c_0,\,\,a, \ldots \big]
$$
or
$$
\big[v_{e},\,\,\hat{x}_{- t_2}(l_1),\,\, \hat{x}_{-
t_2}(l_2),\ldots, \hat{x}_{- t_2}(l_{2n-1}),\,\, c_0,\,\ldots,
c_0,\,\,a,\,\, c_0,\ldots, c_0,\,\,a, \ldots, \big]
$$
where in
each summand all the $x$-quasi\-repre\-sen\-ta\-tives have one and
the same
 index~$t_2$ or $-t_2$ and there are $S_j-A$ occurrences of
``unused'' elements~$a$, and $v_e$ is simply an initial segment.
 The sum of
 indices of these products remains equal modulo $n$ to the sum of
indices of the original product, that is, to~$s_j$; in addition,
 $\overline{ k}=\overline{ t}_2$ and $s_j=s_1$. By
Lemma~\ref{l-12}\,(a) there is a positive integer
 $w\leq n-1$ such that $\overline{
s_j+w{t_2}}=\overline{ s_j-(n-w){t_2}}=\overline{ (s_{j},
t_2)(n\backslash t'_{2})}$, where $t'_{2}=t_{2}/(s_j, t_2)$.
Hence, by cutting off the last $d=n-w$ elements
$\hat{x}_{t_2}(l_i)$ (together with all the $ c_0$ and $a$) in
these products with indices $t_2$ and the last $d=w$ elements
$\hat{x}_{-t_2}(l_i)$ (together with all the $ c_0$ and $a$) in
products with indices~$-t_2$ we obtain in each summand of either
kind an initial segment $u_q$ with the sum of indices $q$ modulo
$n$ such that $\overline{ q}=\overline{ (s_{1}, k)(n\backslash
k')}$, where $k'=k/(s_1,k)$. As a result, the product
(\ref{f10-p2}) is a linear combination of
 products of the form
 \begin{equation}\label{f11-p2}
 \big[u_q,\underbrace{\hat{x}_{{\pm t_2}}(l_{2n-d}),
\hat{x}_{{\pm t_2}}(l_{2n-d+1}),\ldots, \hat{x}_{{\pm
t_2}}(l_{2n-1})}_{d}, c_0,\ldots,  c_0,a, c_0,\ldots, c_0,a,
\ldots \big],
\end{equation}
where all indices $\pm t_2$
are the same, either all $t_2$ or all $ -t_2$, \ $\overline{
q}=\overline{ (s_{1}, k)(n\backslash k')}$, \ $d\leq n-1$, and
there are $S_j-A$
 ``unused'' $a$-occur\-rences. We isolate for convenience a
corollary of Lemma~\ref{l-7}.

\begin{lemma}[{see  \cite[Lemma 15]{ma-khu}}]\label{l-15}
If in a product $$\big[g_{\pm {t_2}},\,c_0,\ldots,
c_0,\,a,\,c_0,\ldots,
 c_0,\,a,\, c_0,\ldots \big]
 $$
 the number of occurrences of $($possibly
different\/$)$
 elements $a$ equal to
linear combinations of products of the form\/ $(\ref{f6-p2})$ is
greater than~$(4n-3)(n-1)$, the overall length is sufficiently
small relative to
 the $W_i$, and $\overline{
k}=\overline{ t}_2$, then this product is equal to~$0$.\end{lemma}

\begin{proof} We substitute the expressions of
 the elements $a$ as
linear combinations of products of the form (\ref{f6-p2}) into our
product. Since  the number of elements~$a$ is greater
than~$(4n-3)(n-1)$, each product of the obtained linear
combination has at least~$4n-3$ subproducts
 $ \big[x_{-t_1}(Y-8(j-1)n ),\,\, x_{t_1}(Y-8(j-1)n )\big]$ with one and the
 same pair of
indices $ \pm t_1$ such that $\overline{t_1}$ divides
$\overline{k}$. Since $j\leq n$, the level $Y-8(j-1)n$ is at least
$4n-3$. Hence we can apply Lemma \ref{l-7} to each
 product of the linear combination. Indeed, in view of the
condition $\overline{ k}=\overline{ t}_2$  the divisibility
condition
 is satisfied and the numbers $W_i$, $C_i$ can be chosen such that $W_i\geq 2C_j+2S_j+5n-5$.\end{proof}

We now transform the product (\ref{f11-p2}) by transferring all
the elements $ \hat{x}_{{\pm t_2}}(l_i)$ successively to the right
over all the elements $a$ and $c_0$. First we transfer the
right-most of them, then the next, and so on. In the additional
summands arising the subproducts $ \big[\hat{x}_{{\pm
t_2}}(l_i),\, c_0\big]$ are also
 $x$-quasi\-repre\-sen\-ta\-tives and take over the role of the element being
transferred. We also transfer to the right the subproducts of the
form $$\big[\hat{x}_{\pm {t_2}}(l_i),\, a,\, c_0,\ldots,  c_0,\,
a,\ldots \big]$$ arising in the additional summands. Of course,
this will decrease the total number of occurrences of the form $
\hat{x}_{{\pm t_2}}(l_i)$. But in this case we aim not at
collecting such elements, but at ``clearing'' of them initial
segments of (\ref{f11-p2}) of the form
\begin{equation}\label{f12-p2}\big[u_q,\, c_0,\ldots,  c_0,\,a,\,
c_0,\ldots,  c_0,\,a, \ldots \big],
\end{equation}
in which there
are sufficiently many
 occurrences of elements~$a$ with only $c_0$-occur\-rences between them
 (and the number of the $c_0$ is $n$-bounded). The number of
$a$-occurrences  may also be decreasing in the process described
above. But by
 Lemma~\ref{l-15} this number can be decreased by at most
 $(n-1)(4n-3)(n-1)$ (since $d \leq
n-1$). Hence the number of $a$-occur\-rences in the initial
segments
 (\ref{f12-p2}) will be at least~$S_j-A- (n-1)^2(4n-3)$.

We now substitute into~(\ref{f12-p2}) the expressions of elements
$a$ as linear combinations of products of the ``unmodular'' form
(\ref{f7-p2}). We obtain a linear combination of products of the
form
\begin{equation}\label{f13-p2}\begin{array}{c}\big[u_{q},\,c_0,\ldots,
c_0,\,\, \big[x_{-r_1}(Y-8(j-1)n ),\,\, x_{r_1}(Y-8(j-1)n )\big],
\,\,c_0,\ldots
\;\;\;\;\;\;\;\;\;\;\;\;\;\;\;\;\;\;\;\;\;\;\\
 \\ \;\;\;\;\;\;\;\ldots c_0,\,\,
\big[x_{-r_i}(Y-8(j-1)n ),\,\, x_{r_i}(Y-8(j-1)n )\big],
\,\,c_0,\ldots \big],\end{array}\end{equation} where all the
indices $r_i$ are such that $(\overline{ r}_i,\overline{ k})$
divides $(\overline{ s}_{1},\overline{ k})$.
 If $$S_j-A- (n-1)^2(4n-3)>(n-1)(4n-3),$$
then in each product~(\ref{f13-p2}) there are $4n-3$
 subproducts with equal pairs of indices $\pm r_{i_0}$. We choose $4n-3$
subproducts $\big[x_{-r_{i_0}}(Y-8(j-1)n ),\,\,
x_{r_{i_0}}(Y-8(j-1)n )\big]$ with such indices
 and freeze the others together with subproducts with other
indices $r_i\ne  r_{i_0}$ in level~$0$ thus adding them to
 $c_0$-occur\-rences. We re-denote $r_2=r_{i_0}$ so that the resulting
products have the form
\begin{equation}\label{f14-p2}
\begin{array}{c}\big[u_{q},\,c_0,\ldots, c_0,\,
\big[x_{-r_2}(Y-8(j-1)n ),\, x_{r_2}(Y-8(j-1)n )\big],
\,c_0,\ldots
\;\;\;\;\;\;\;\;\;\;\;\;\;\;\;\;\;\;\;\;\;\\
 \\
\;\;\;\;\;\;\;\ldots c_0,\,\, \big[x_{-r_2}(Y-8(j-1)n ),\,\,
x_{r_2}(Y-8(j-1)n )\big], \,c_0,\ldots \big],
\end{array}\end{equation}
where the index $r_2$
 is such that $(\overline{ r}_2,\overline{ k})$ divides $(\overline{
s}_{1},\overline{ k})$, there are $4n-3$ subproducts $
\big[x_{-r_2}(Y-8(j-1)n ),\, x_{r_2}(Y-8(j-1)n )\big]$, and the
number of $c_0$-occur\-rences is at most $C_j+S_j$. Since $j\leq
n$, the level $Y-8(j-1)n$ is at least~$4n-3$. If $W_j\geq
2C_j+2S_j+5n-5$, all the products~(\ref{f14-p2}) are equal to~$0$
by Lemma~\ref{l-7}. Indeed, $\overline{ q}=\overline{ (s_{1},
k)(n\backslash k')}$, where $k'=k/(s_1,k)$. By
Lemma~\ref{l-12}\,(d), if $(\overline{ r}_2,\overline{ k})$
divides $(\overline{ s}_{1},\overline{ k})$, then $
\overline{r}_2$ divides $\overline{ (s_{1}, k)(n\backslash k')}$
and therefore divides~$q$.

Lemma~\ref{l-14} is proved.\end{proof}

We now complete the proof of Proposition~\ref{p2}. By
Lemma~\ref{l-14} products of the form (\ref{f10-p2}) can only be
non-zero if $\overline{ t}_2<\overline{ k}$. Freezing unused
elements $a$ in (\ref{f10-p2}) and substituting the corresponding
linear combinations into a product $b$ of the form (\ref{f8-p2})
 we obtain that any $zc$-element of type $(s_{j}\ldots
s_{1}k(Y))$ is equal to a linear combination of $zc$-elements of
the types $(s_jt_2(Y-8(j-1)n ))$ for (various) $t_2$ such that
$\overline{ t}_2<\overline{ k}$  and $\overline{ t}_2$ divides
$\overline{ k}$ (the number of occur\-rences of elements $c_0$ and
unused elements $a$ in (\ref{f10-p2}) is at most $S_j+C_j$, while
the difference $C_1-C_j$ can be chosen greater than~$S_j$). By
Lemma \ref{l-101} any
 $zc$-element $h$ of type (\ref{f3-p2}) is equal to a linear
 combination of $zc$-elements of the types $(s_{F}\ldots s_jt_2(Y-8(j-1)n
))$ for $t_2$ such that $\overline{ t}_2<\overline{ k}$ and
$\overline{ t}_2$ divides $\overline{ k}$. Since $j\leq n$ and
$i_1\leq n-1$,  for all $t_2$ such that $
\overline{t_2}<\overline{k}$ and $\overline{ t}_2$ divides
$\overline{ k}$ we have
$$
Y-8(j-1)n = H(\overline{k})-8n(i_1-1)-8n(j-1)=$$
$$=4n-3-8n(2n-3) \sum_{i=1}^w m_i-8n(i_1-1+j-1)\geq$$
$$\geq 4n-3-8n(2n-3) \sum_{i=1}^w
m_i-8n(2n-3)=4n-3-8n(2n-3)(\sum_{i=1}^w m_i -1)\geq
H(\overline{t_2})
$$
and
$$
F-j+1= Q(\overline{k})-i_1+1-j+1= 1+(2n-3)\sum_{i=1}^w
m_i-i_1+1-j+1\geq$$
 $$\geq 1+(2n-3)\sum_{i=1}^w m_i-(2n-3)=1+(2n-3)(\sum_{i=1}^w m_i-1)\geq
Q(\overline{t_2}).
$$
 By the
induction hypothesis such $zc$-elements are equal to~$0$.

Proposition~\ref{p2} and therefore Theorem~\ref{th1} are proved.
\end{proof}

\section{Choice of the parameters} \label{parametry}

In the proof of Proposition \ref{p2} and some auxiliary lemmas we
were using the following inequalities between the parameters
$W_i$, $C_i$, $S_i$, $T_i$, and $A$:
$$\begin{array}{rcl}
W_N&\geq&2n^3\;\;{\rm
\;(Lemma\; \ref{l-9})}; \vspace{0.8ex}\\
T_i/S_i&>&n-1\;\;\;{\rm for}\;\,i>1\;\;{\rm
\;(Lemma\; \ref{l-9})}; \vspace{0.8ex}\\
T_i/S_i&>&(n-1)^i\;\;{\rm \;(Lemma\; \ref{l-9})}; \vspace{0.8ex}\\
C_i&\geq&
T_i-S_i\;\;{\rm \;(Lemma\; \ref{l-9})}; \vspace{0.8ex}\\
S_i&\geq& 8n-7\;\;{\rm \;(Lemma\;
\ref{l-9},\; Lemma\;\ref{l-contraction},\; Proposition\; \ref{p2})}; \vspace{0.8ex}\\
W_1&\geq& 2(T_i-S_i)+5n-4\;\;{\rm \;(Lemma\; \; \ref{l-9})}; \vspace{0.8ex}\\
S_{i+k}/S_i&>&n-1\;\; {\rm \;(Lemma\; \ref{l-10},\; Lemma\;
\ref{l-101},\;Proposition\; \ref{p2});}
\vspace{0.8ex}\\
C_j-C_{j+k}&\geq& S_{j+k}\;\;\;{\rm for}\;\,k\geq 1\;\; {\rm
\;(Lemma\; \ref{l-10},\; Lemma\; \ref{l-101},\; Proposition\;
\ref{p2})}; \vspace{0.8ex}\end{array}$$
$$\begin{array}{rcl} W_1&\geq& 2C_1+5n-5\;\;{\rm \;(Lemma\; \ref{l-contraction},\;
Proposition\; \ref{p2})};
\vspace{0.8ex}\\
W_1&\geq& 2C_1+12n-11\;\;{\rm \;(Lemma\; \ref{l-contraction})}; \vspace{0.8ex}\\
W_l-W_{l-1}&\geq&
2C_1+12n-12\;\;{\rm \;(Lemma\; \ref{l-contraction})}; \vspace{0.8ex}\\
A&=&2(4n-3)(n-1)-1 \;\;{\rm \;(Proposition\; \ref{p2})}; \vspace{0.8ex}\\
W_l-W_{l-1}&\geq& 2C_1+2S_1\;\;{\rm \;(Lemma\; \ref{l-contraction})}; \vspace{0.8ex}\\
W_i&\geq& 2C_1+2S_1+n-1\;\;{\rm \;(Lemma\; \ref{l-contraction})}; \vspace{0.8ex}\\
W_1&\geq& 2C_j+2S_j+4n-3\;\;{\rm \;(Proposition\; \ref{p2})}; \vspace{0.8ex}\\
W_1&\geq& 2C_j+2S_j+5n-5\;\;{\rm
\;(Proposition\; \ref{p2})}; \vspace{0.8ex}\\
S_j&>&A+(n-1)^2(4n-3)+(4n-3)(n-1)\;\;\;{\rm for}\;\,j>1\;\;{\rm
\;(Proposition\; \ref{p2})}.\end{array}$$

 The number of the parameters $T_i$, $S_i$, and $C_i$ is
$Q(n)$, while the number of the parameters~$W_i$ is equal to the
highest level $N=H(n)+2$ in the construction of  generalized
centralizers. We can indeed choose all these parameters to be
 $n$-bounded and
 satisfying all these inequalities in the following
order:
 first $S_1= 8n-7$, then the $S_j$
(using the maximum of the two estimates),
 then the $T_i$ (the maximum of the two estimates), then a decreasing
sequence of the $C_i$, and finally the numbers
 $W_i$ with sufficiently large differences~$W_{i+1}-W_i$.

\section{ Completion of the proofs of main results}

 We shall need
the following  lemma.
\begin{lemma}\label{p-p}
Let $p$ be a prime number and let $\psi $ be a linear
transformation of finite order~$p^k$ of a vector space~$V$ over a
field of characteristic~$p$ the space of fixed points of which has
finite dimension~$m$. Then the dimension
 of $V$ is finite and does not exceed~$mp^k$. \end{lemma}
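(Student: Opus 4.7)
The plan is to exploit the fact that in characteristic $p$ the equation $\psi^{p^k} = 1$ forces $\psi - 1$ to be nilpotent, so that the classical kernel filtration of a nilpotent operator controls the dimension of $V$ in terms of the dimension of the fixed-point subspace.

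First I would set $\eta = \psi - \mathrm{id}$. Since the ground field has characteristic $p$, the Frobenius-type identity gives
\[
\eta^{p^k} = (\psi - \mathrm{id})^{p^k} = \psi^{p^k} - \mathrm{id} = 0,
\]
so $\eta$ is nilpotent of index at most $p^k$. Moreover, $\ker\eta$ is exactly the fixed-point subspace $C_V(\psi)$, which by hypothesis has dimension $m$.

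Next I would consider the ascending filtration
\[
0 = \ker \eta^0 \subseteq \ker\eta \subseteq \ker\eta^{2} \subseteq \cdots \subseteq \ker\eta^{p^k} = V,
\]
the last equality following from $\eta^{p^k} = 0$. The key observation is that $\eta$ induces, for each $i \geq 1$, a well-defined injective linear map
\[
\ker\eta^{i+1}/\ker\eta^{i} \;\hookrightarrow\; \ker\eta^{i}/\ker\eta^{i-1},
\]
sending $v + \ker\eta^{i}$ to $\eta v + \ker\eta^{i-1}$ (well-definedness and injectivity are both immediate from the definitions of the kernels). Consequently
\[
\dim\bigl(\ker\eta^{i+1}/\ker\eta^{i}\bigr) \;\leq\; \dim\bigl(\ker\eta/\ker\eta^{0}\bigr) = m
\]
for every $i \geq 0$, by a trivial induction. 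In particular each successive quotient in the filtration is finite-dimensional of dimension at most $m$, which already shows that $V$ itself is finite-dimensional.

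Summing over the $p^k$ successive quotients gives
\[
\dim V \;=\; \sum_{i=0}^{p^k - 1}\dim\bigl(\ker\eta^{i+1}/\ker\eta^{i}\bigr) \;\leq\; m p^k,
\]
which is the required bound. There is no real obstacle here: the only point that requires care is verifying that the freshman's-dream identity $(\psi - \mathrm{id})^{p^k} = \psi^{p^k} - \mathrm{id}$ is legitimate, but this is standard in characteristic $p$ because $\psi$ and $\mathrm{id}$ commute.
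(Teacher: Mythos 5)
Your proof is correct and complete. The paper itself gives no proof, merely remarking that the result is well known and ``based on considering the Jordan form of the transformation $\psi$'' with a citation to \cite[1.7.4]{kh-book}. Your argument is the elementary avatar of that Jordan-form reasoning: writing $\eta=\psi-\mathrm{id}$, observing $\eta^{p^k}=0$ by the Frobenius identity in characteristic~$p$, and then bounding the jumps of the kernel filtration $0\subseteq\ker\eta\subseteq\ker\eta^2\subseteq\cdots\subseteq\ker\eta^{p^k}=V$ by the injectivity of the maps $\ker\eta^{i+1}/\ker\eta^i\hookrightarrow\ker\eta^i/\ker\eta^{i-1}$ induced by $\eta$. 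In the Jordan-form picture these quotients correspond to counting Jordan blocks of length $\geq i+1$: there are exactly $m$ blocks (one fixed vector per block) and each block has length at most $p^k$ (the nilpotency index of $\eta$), which gives the same bound $\dim V\leq mp^k$. The advantage of your filtration argument is that it does not presuppose $V$ finite-dimensional; the inequality $\dim(\ker\eta^{i+1}/\ker\eta^i)\leq m$ establishes finite-dimensionality directly, whereas invoking a Jordan normal form requires either a preliminary finiteness reduction or an appeal to structure theory for torsion modules over $F[x]$. So your route is, if anything, cleaner for the statement as written, which explicitly does not assume $\dim V<\infty$.
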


\begin{proof}
This is a well-known fact, the proof of which is based on
considering the Jordan form of the transformation $\psi$; see, for
example, \cite[1.7.4]{kh-book}.
\end{proof}

\noindent {\it Proof of Theorem~{\rm \ref{th2}}.\/}  We now
consider the situation under the hypothesis of Theorem \ref{th2}.
Let $L$ be a Lie type algebra over a field $\Bbb F$ and $\varphi$
 an automorphism of order $n$ of  $L$ with finite-dimensional
fixed-point subalgebra $C_L(\varphi )$ of dimension $\dim
C_L(\varphi )=m$.

First suppose that the characteristic of the field $\Bbb F$ is
equal to a prime divisor $p$ of the number $n$. Let $ \langle \psi
\rangle$ be the Sylow $p\hs$-subgroup of the group $\langle
\varphi\rangle$, and let $\langle \varphi\rangle =\langle
\psi\rangle \times \langle \chi\rangle$, where the order of $\chi$
is not divisible  by $p$. Consider the subalgebra of fixed points
$A=C_L(\chi )$. It is $\psi$-invariant and $ C_A(\psi )\subseteq
C_L(\varphi )$. Therefore, ${\rm dim\,}C_A(\psi )\leq m$, and by
Lemma~\ref{p-p}, the dimension ${\rm dim\,} A={\rm dim\,}C_L(\chi
)$ is bounded by some $(m,n)$-bounded number $u(m,n)$.
Furthermore, $\chi$ is a semisimple automorphism of the  algebra
$L$ of order $\leq n$. Thus, $L$ admits the automorphism $\chi$
and ${\rm dim\,}C_L(\chi )\leq u(m,n)$. Replacing $\varphi$ by
$\chi$ we can assume that $p$ does not divide $n$.

Let $\omega$ be a primitive $n$th root of unity. We extend the
ground field by $\omega$ and denote by $\widetilde L$ the algebra
$L\otimes _{{\Bbb F} }{\Bbb F} [\omega ]$. Then $\varphi$ induces
an automorphism of the  algebra $\widetilde L$. This automorphism
is denoted by the same letter. Its fixed-point subalgebra  has the
same dimension $m$.  Clearly, it suffices to prove
Theorem~\ref{th2}  for the algebra $\widetilde L$. Since the
characteristic of the field does not divide $n$, we have
$$\widetilde L= L_0 \oplus L_1\oplus \dots \oplus L_{n-1},
$$
where
$$L_k=\left\{ a\in \widetilde L\mid  \varphi(a)=\omega
^{k}a\right\},$$ and this decomposition is a $({\Bbb Z}/n{\Bbb
Z})$-grading, since
$$[L_s, L_t]\subseteq L_{s+t\,({\rm mod}\,n)},
$$
where $s+t$ is calculated modulo $n$.

By Theorem~\ref{th1} the algebra $\widetilde L$ has a homogeneous
soluble ideal $Z$ of finite $(m,n)$-bounded codimension and of
$n$-bounded derived length. Obviously, the ideal  $L\cap Z$ is the
sought-for soluble ideal in $L$ of finite $(m,n)$-bounded
codimension and of $n$-bounded derived length. Theorem \ref{th2}
is proved.
 \ep

\noindent {\it Proof of Theorem~{\rm \ref{th5}}.\/}
 Let  $Q$ and  $G$ be finite cyclic groups of coprime orders  $k$
 and~$n$.  Suppose that
 $L=\bigoplus_{q\in Q} L_q=\bigoplus_{g\in G} L^{(g)}$ is a $G$-graded color Lie
 superalgebra and   $L_0^{(e)}=L^{(e)}\cap L_0$ has finite dimension $m$.
Let $B=Q\times G$ be the direct product of $Q$ and $G$. The group
$B$ is cyclic of order $kn$ since $G$ and $Q$ are cyclic groups of
coprime orders. We  consider $L$ as a  $B$-graded algebra
$L=\bigoplus_{b\in B}L_{b}$ with $b=(q,g)\in Q\times G$ and
$L_b=L_q^{(g)}=L_q\cap L^{(g)}$. This $B$-graded algebra is $(\Bbb
Z/qn\Bbb Z)$-graded Lie type algebra, since
$$\big[[x,y],z\big]=\big[x,[y,z]\big]-\epsilon(p,q)\big[[x,z],y\big]$$
for $x\in L_p^{(g)}$, $y\in L_q^{(h)}$, $z\in L$. If $e$ is the
neutral element of $G$, then  the subspace $L_0^{e}$ is the
homogeneous identity component with respect to $B$-grading, hence
Theorem \ref{th1} implies that  $L$ contains a homogeneous soluble
ideal of $(n,k)$-bounded derived length and of finite
$(n,k,m)$-bounded codimension. \ep

{\it Proof of Theorem~{\rm \ref{th6}}.\/}
  Let  $Q$  be a finite cyclic group of
order  $k$. Suppose that a  color Lie
 superalgebra $L=\bigoplus_{q\in Q} L_q$  admits an automorphism $\varphi$ of finite
 order $n$ relatively prime to $k$. Recall that by definition, $\varphi$  preserves the given
$Q$-grading: $L_q^{\varphi}\subseteq L_q$ for all $q\in Q$.

First we perform exactly  the same reduction  as in the proof of
Theorem \ref{th2} to the case where the characteristic of $\Bbb F$
does not divide $n$.

Let $\omega$ be a primitive $n$th root of unity. We extend the
ground field by $\omega$ and denote by $\widetilde L$ the color
Lie superalgebra $L\otimes _{{\Bbb F} }{\Bbb F} [\omega
]=\bigoplus_{q\in Q} \widetilde{L_q}$, where
$\widetilde{L_q}=L_q\otimes _{\Bbb F }\, \F [\omega ]$. Then
$\varphi$ induces an automorphism of  $\widetilde L$. This
automorphism is denoted by the same letter. Its fixed-point
subalgebra $C_{\widetilde{L_0}}(\varphi)$ in $\widetilde{L_0}$ has
the same dimension $m$ as $C_{L_0}(\varphi)$. Clearly, it suffices
to prove Theorem~\ref{th6} for the algebra $\widetilde L$. Hence
in what follows we can assume that the ground field of $L$
contains a primitive $n$-th root of~1.

We have
$$L= L^{0} \oplus L^{1}\oplus \dots \oplus L^{n-1},
$$
where
$$L^{k}=\left\{ a\in  L\mid  \varphi(a)=\omega
^{k}a\right\},$$ and this decomposition is a $({\Bbb Z}/n{\Bbb
Z})$-grading, since
$$[L_s, L_t]\subseteq L_{s+t\,({\rm mod}\,n)},
$$
where $s+t$ is calculated modulo $n$. The color Lie superalgebra
$L$ is $(\Bbb Z/n\Bbb Z)$-graded since $L$ is a direct sum of
spaces $L^{k}$:
$$L=\bigoplus_{k\in \Bbb Z/n\Bbb Z} L^{k},\,\,\,[L_s, L_t]\subseteq L_{s+t\,({\rm mod}\,n)}$$
and   $L^{k}$ are homogeneous with respect to the
$Q$-grading, that is
$$L^{k}=\bigoplus_{q\in Q} (L^{k}\cap L_q).$$
By  hypothesis, $$\mathrm{dim}\, C_{L_0}(\varphi)= \mathrm{dim}\,
L_0^0=\mathrm{dim}\, L_0\cap L^0=m.$$ Theorem \ref{th5} implies
that  $L$ has  a homogeneous soluble
 ideal of finite $(n,k,m)$-bounded codimension
 and of $(n,k)$-bounded derived length. \ep

\vskip1ex

\end{document}